\title{Weighted words at degree two, I: Bressoud's algorithm as an energy transfer}
\author{Isaac KONAN} \address{IRIF \\ University
of Paris\\  Paris,  75013, France}
\email{konan@irif.fr}
\date{}
\newcommand{\m}{\medbreak}
\newcommand{\bi}{\bigbreak}
\definecolor{foge}{rgb}{0.1, 0.6, 0.1}
\newcommand{\So}{\textbf{Step 1 }}
\newcommand{\Soo}{\textbf{Step 1}}
\newcommand{\St}{\textbf{Step 2 }}
\newcommand{\Stt}{\textbf{Step 2}}
\numberwithin{equation}{section}
\numberwithin{figure}{section}
\newcommand{\Thm}[1]{Theorem \ref{#1}}
\newcommand{\Cor}[1]{Corollary \ref{#1}}
\newcommand{\Lem}[1]{Lemma \ref{#1}}
\newcommand{\Prp}[1]{Proposition \ref{#1}}
\newcommand{\Sct}[1]{Section \ref{#1}}
\newcommand{\Def}[1]{Definition \ref{#1}}
\newcommand{\Rem}[1]{Remark \ref{#1}}
\newcommand{\Expl}[1]{Example \ref{#1}}
\newcommand{\Expls}[1]{Examples \ref{#1}}
\newcommand{\Ll}{\Lambda}
\newcommand{\Pp}{\mathcal{P}}
\newcommand{\Z}{\mathbb{Z}}
\newcommand{\C}{\mathcal{C}}
\newcommand{\ep}{\epsilon}
\newcommand{\Odd}{\mathcal{O}_{\ep}}
\newcommand{\Sc}{\mathcal{S}_{\ep}}
\newcommand{\od}{\succ_{\ep}}
\newcommand{\odp}{\gg_{\ep}}
\newcommand{\odg}{\gg^{\ep}}
\newcommand \cc {\overline{c}}
\newcommand{\Cr}{\textit{Chasles' relation }}
\newcommand{\Crr}{\textit{Chasles' relation}}
\newcommand{\Ti}{\textit{triangle inequality}}
\newcommand{\Par}{\textit{energetic particle }}
\newcommand{\pt}{\phi}
\newcommand{\E}{\mathcal{E}_{\ep}}
\newcommand{\Ee}{\mathcal{E}^{\ep}}
\newcommand{\la}{\lambda}
\newcommand{\sss}{\{1,\ldots,s\}}
\newcommand{\ssss}{\{1,\ldots,s-1\}}
\newcommand{\Ssss}{\{2,\ldots,s\}}
\numberwithin{equation}{section}
\newtheorem{theo}{Theorem}[section]
\newtheorem{prop}[theo]{Proposition}
\newtheorem{lem}[theo]{Lemma}
\newtheorem{cor}[theo]{Corollary}
\newtheorem{rem}[theo]{Remark}
\newtheorem{ex}[theo]{Example}
\newtheorem{exs}[theo]{Examples}
\theoremstyle{definition} \newtheorem{deff}[theo]{Definition}
\begin{document}
\maketitle
\begin{abstract}
In a recent paper, we generalized a partition identity stated by Siladi\'c in his study of the level one standard module of type $A_2^{(2)}$.  The proof used weighted words with an arbitrary number of primary colors and all the secondary colors obtained from these primary colors, and a brand new variant of the bijection of Bressoud for Schur's partition identity.
In this paper, the first of two, we analyze this variant of Bressoud's algorithm in the framework of statistical mechanics, where an integer partition is viewed as an amount of energy shared, according to certain properties, between several states. This viewpoint allows us to generalize the previous result by considering a more general family of minimal difference conditions. For example, we generalize the Siladi\'c identity to overpartitions.
In the second paper, we connect this result to the Glaisher theorem and give some applications to level one perfect crystals.
\end{abstract}
\section{Introduction}
\subsection{History}
\subsubsection{Weighted words: from Alladi-Gordon to Siladi\'c}
Let $n$ be a positive integer. A partition of $n$ is defined as a non-increasing sequence of positive integers, called the parts of the partition, and whose sum is equal to $n$. For example, the partitions of $5$ are 
\[(5),(4,1),(3,2),(3,1,1),(2,2,1),(2,1,1,1), \,\,\text{and}\,\,(1,1,1,1,1,1,1)\,\cdot\]
By a partition identity we mean a combinatorial identity that links two or several sets of integer partitions. The study of such identities has interested mathematicians for centuries, dating back to
Euler's proof that there are as many partitions of $n$ into distinct parts as partitions of $n$ into odd parts. The Euler distinct-odd identity can be written in terms of $q$-series with the following expression:
\begin{equation}\label{eq:euler}
(-q;q)_\infty = \frac{1}{(q;q^2)_\infty}\,\cdot
\end{equation}
In the latter formula, $(x;q)_m = \prod_{k=0}^{m-1} (1-xq^k)$
for  any $m\in \mathbb{N}\cup \{\infty\}$ and $x,q$ such that $|q|<1$.
\m A broad generalization of Euler's identity was found and proved by Glaisher. In \cite{G83}, Glaisher stated that, for any positive integers $m$ and $n$, there are as many partitions of $n$ into parts not divisible by $m$ as partitions of $n$ with fewer than $m$ occurrences for each positive integer. One can convey the Glaisher identity as the following $q$-series
\begin{equation}
\prod_{n\geq 1} (1+q^n+ q^{2n}+\cdots+q^{n(m-1)}) = \prod_{\substack{n\geq 1 \\ m \nmid n}} \frac{1}{(1-q^n)} = \frac{(q^m;q^m)_{\infty}}{(q;q)_{\infty}} \,\cdot
\end{equation}
\bi The theory of integer partitions underwent significant advancement in the earlier twentieth century. Major works on partitions identities were led by MacMahon \cite{Mac15}, Rogers and Ramanujan \cite{RR19}, and Schur \cite{Sc26}. Schur stated in his work one of the most important identities in the theory of partitions.
\begin{theo}[Schur]\label{theo:schur}
For any positive integer $n$, the number of partitions of $n$ into distinct parts congruent to $\pm 1 \mod 3$ is equal to the number of partitions of $n$ where parts differ by at least three and multiples of three differ by at least six.
\end{theo}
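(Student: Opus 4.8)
The plan is to reduce Schur's identity to a statement about two-colored partitions and then realize the passage between the two sides as an explicit, invertible combinatorial procedure — exactly the Bressoud-type algorithm that this paper will later reinterpret as an energy transfer. First I would encode residues by colors: a part $3k-2\equiv 1 \pmod 3$ becomes an integer of size $k$ and color $a$, a part $3k-1\equiv 2\pmod 3$ becomes an integer of size $k$ and color $b$, and a multiple of three becomes a part of the secondary color $ab$. Under the dilation $q\mapsto q^{3}$ together with $a\mapsto q^{-2}$ and $b\mapsto q^{-1}$ (so that $ab\mapsto q^{-3}$), the left-hand side of the theorem — distinct parts $\equiv\pm 1\pmod 3$ — corresponds to the \emph{free} object: a set of distinct $a$-colored integers together with a set of distinct $b$-colored integers, with generating function $\prod_{k\ge 1}(1+aq^{k})(1+bq^{k})$, where the $a$-sizes and $b$-sizes are each distinct but may coincide with one another.

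The heart of the argument is an algorithm turning such a free two-colored configuration into a partition whose colored parts obey a prescribed minimal-difference matrix and whose dilation recovers the right-hand side (consecutive parts differing by at least $3$, with a part divisible by $3$ exceeding its successor by at least $6$). Note that the map cannot preserve the number of parts: for instance $\{1_a,1_b\}$ of total weight $3$ must collapse to the single part $(3)$. The fundamental move is therefore a \emph{merge}: an $a$-part and a $b$-part that sit too close are fused into one $ab$-part whose size is their combined size, since $(3k-2)+(3k-1)=6k-3$ is a multiple of three; this is the energy-conserving step. Scanning the colored multiset from the smallest part upward and repeatedly applying this merge together with auxiliary shifts to repair the remaining gap violations should terminate in a configuration satisfying the difference matrix attached to the three colors (with an appropriate total order in which the secondary color $ab$ carries the larger gap), and translating that matrix back through the dilation yields precisely the conditions in the statement.

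The main obstacle I expect is proving that this procedure is a genuine bijection. One must show that the sweep terminates; that the output always satisfies the \emph{full} set of conditions, including the refined gap of $6$ below each multiple of three — which is exactly what forces the color $ab$ to behave differently from $a$ and $b$; and, hardest of all, that the process is reversible, so that from a Schur partition one can uniquely reconstruct the original two-colored set by a well-defined splitting inverse. The delicate bookkeeping lies in isolating an invariant preserved by each merge and shift and in checking that no two distinct free configurations ever collapse to the same output, which is where a careful analysis of the difference matrix near collisions such as $(2_a,1_b)$ — whose dilated parts differ by only $2$ — becomes essential.

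As a cross-check, and as an alternative that sidesteps the bijection entirely, I would derive a $q$-difference equation for a refined generating function $\Phi(x,q)=\sum_{n}s_n(x)\,q^{n}$ of the right-hand-side partitions, where $x$ is an auxiliary variable tracking, say, the largest part. Peeling off that largest part and splitting into the two possible gap constraints (gap $\ge 3$ versus gap $\ge 6$ according to its residue mod $3$) should produce a functional equation relating $\Phi(x,q)$ to $\Phi(xq^{3},q)$; iterating it and specializing at $x=1$ should reproduce $\prod_{k\ge 0}(1+q^{3k+1})(1+q^{3k+2})$, the generating function of the left-hand side, thereby confirming the count independently of the combinatorial construction.
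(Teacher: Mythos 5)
Your proposal is correct and takes essentially the same route as the paper: Schur's theorem is read off from the Alladi--Gordon weighted-words refinement (Theorem \ref{theo:ag}) under the dilation $(q,a,b)\mapsto(q^{3},q^{-2},q^{-1})$, the refinement itself being established by exactly the Bressoud-type merge-and-shift bijection (troublesome $a$/$b$ pairs fused into the secondary color $ab$, then reordered) that this paper recasts as an energy transfer. Your $q$-difference-equation cross-check is likewise the Andrews proof the paper cites as an alternative.
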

There have been a number of proofs of Schur's result over the years,
including a $q$-difference equation proof of Andrews \cite{AN68} and a simple
bijective proof of Bressoud \cite{BR80}.
\bi 
In the 90's, seminal work of Alladi and Gordon showed how the
theorem of Schur emerges from more general results \cite{AAG95}. They introduced weighted words, a method which consists in associating some colors to the integers, and then considering integer partitions into colored integers. Such partitions are called \textit{colored partitions}. 
\m We consider that the integers occur in three colors $\{a,b,ab\}$, and we order them as follows:
\begin{equation}
1_{ab}<1_a<1_b<2_{ab}<2_a<2_b<3_{ab}<\cdots\,\cdot
\end{equation}
We then consider the partitions with colored parts different from $1_{ab}$ and satisfying the minimal difference conditions in the matrix 
\begin{equation}\label{eq:diffag}
\bordermatrix{
\text{}&ab&a&b
\cr ab&2&2&2
\cr a&1&1&2
\cr b&1&1&1
}\,\cdot
\end{equation}
Here, the term ``minimal difference conditions" means that, for a colored partition $\la = (\la_1,\cdots,\la_s)$, the part $\la_i$ with color in the row and the part $\la_{i+1}$ with color in the column differ by at least the corresponding entry in the matrix.  
An example of such a partition is $(7_{ab},5_{b},4_{a},3_{ab},1_b)$. The Alladi-Gordon refinement of Schur's partition theorem \cite{AG93} is stated as follows:
\begin{theo}[Alladi-Gordon]\label{theo:ag}
Let $u,v,n$ be non-negative integers. Denote by $A(u,v,n)$ the number of partitions of $n$ into $u$ distinct parts with color $a$ and $v$ distinct parts with color $b$, and denote by $B(u,v,n)$ the number of partitions of $n$ satisfying the conditions above, with $u$ parts with color $a$ or $ab$, and $v$ parts with color $b$ or $ab$. We then have  $A(u,v,n)=B(u,v,n)$ and the identity
\begin{equation}
\sum_{u,v,n\geq 0} B(u,v,n)a^ub^vq^n = \sum_{u,v,n\geq 0} A(u,v,n)a^ub^vq^n = (-aq;q)_\infty (-bq;q)_\infty\,\cdot
\end{equation}
\end{theo}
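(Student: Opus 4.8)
The statement splits into an easy half and the combinatorial core. The generating function for $A$ is immediate: a partition counted by $A(u,v,n)$ is the independent data of a set of $u$ distinct positive integers colored $a$ and a set of $v$ distinct positive integers colored $b$, each integer $k$ of color $a$ (resp. $b$) contributing a factor $aq^k$ (resp. $bq^k$). Hence $\sum_{u,v,n}A(u,v,n)a^ub^vq^n=\prod_{k\ge1}(1+aq^k)\prod_{k\ge1}(1+bq^k)=(-aq;q)_\infty(-bq;q)_\infty$, and all the content lies in the refined equality $A(u,v,n)=B(u,v,n)$. The plan is to establish it by computing the generating function of $B$ and matching it to this product.

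Because every entry of \eqref{eq:diffag} is at least $1$, two consecutive parts of a $B$-partition differ by at least $1$, so all part-sizes are distinct; a $B$-partition is thus determined by its multiset of colors together with its strictly increasing sequence of sizes. First I would fix a color content $(i,j,k)$ (numbers of parts colored $a$, $b$, $ab$) and set $s=i+j+k$, $u=i+k$, $v=j+k$. For each ordering $c=(c_1,\dots,c_s)$ of these colors from the smallest part upward, the \emph{minimal} partition has $p_1=m(c_1)$ and $p_{t+1}=p_t+M_{c_{t+1},c_t}$, where $m(ab)=2$ and $m(a)=m(b)=1$ encode the exclusion of $1_{ab}$ and $M$ is the matrix \eqref{eq:diffag}; every other $B$-partition with this color order is obtained by adding increments $0\le\delta_1\le\cdots\le\delta_s$, that is, a partition into at most $s$ parts, contributing a factor $1/(q;q)_s$. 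Writing $E(c)$ for the size of the minimal partition, the contribution of content $(i,j,k)$ is $\big(\sum_c q^{E(c)}\big)/(q;q)_s$.

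The key structural step is that the gaps in \eqref{eq:diffag} obey a Chasles-type relation, under which transposing two adjacent colors in $c$ shifts $E(c)$ by exactly the corresponding inversion weight; summing over all $\binom{s}{i,j,k}$ orderings then telescopes into a $q$-multinomial, $\sum_c q^{E(c)}=q^{E_{\min}(i,j,k)}\binom{s}{i,j,k}_q$ with $\binom{s}{i,j,k}_q=(q;q)_s/\big((q;q)_i(q;q)_j(q;q)_k\big)$, so that content $(i,j,k)$ contributes precisely $q^{E_{\min}(i,j,k)}/\big((q;q)_i(q;q)_j(q;q)_k\big)$. A direct computation identifies the energy-minimizing order and gives $E_{\min}(i,j,k)=\binom{u-k+1}{2}+\binom{v-k+1}{2}+uv+k$. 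Extracting the coefficient of $a^ub^v$ (summing over $k$ with $i=u-k$, $j=v-k$) then reduces the theorem to the terminating $q$-series identity
\[\sum_{k\ge0}\frac{q^{\binom{u-k+1}{2}+\binom{v-k+1}{2}+uv+k}}{(q;q)_{u-k}(q;q)_{v-k}(q;q)_k}=\frac{q^{\binom{u+1}{2}+\binom{v+1}{2}}}{(q;q)_u(q;q)_v},\]
which one recognizes as a form of the $q$-Chu--Vandermonde summation (and which I would verify matches the coefficient of $a^ub^v$ in $(-aq;q)_\infty(-bq;q)_\infty$ computed by Euler's identity).

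The main obstacle is the structural fact above, namely that $\sum_c q^{E(c)}$ factors as $q^{E_{\min}}$ times a $q$-multinomial. This is exactly where the specific entries of \eqref{eq:diffag} are needed: one must check that an adjacent transposition of colors shifts the energy by a single controlled amount and never more, the property the rest of the paper reinterprets as an energy transfer. A fully bijective alternative — the route the paper emphasizes — would replace this computation by Bressoud's algorithm: merge an $a$-part and a $b$-part into one $ab$-part (adding their sizes and transferring the resulting slack to the larger parts) until \eqref{eq:diffag} holds, the inverse splitting each $ab$-part of size $m$ into an $a$-part and a $b$-part whose sizes sum to $m$. There the obstacle shifts to showing that the output satisfies exactly the matrix conditions, avoids $1_{ab}$, and that merge and split are mutually inverse.
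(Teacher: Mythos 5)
A preliminary remark: the paper does not prove Theorem \ref{theo:ag} at all; it is quoted from Alladi--Gordon \cite{AG93} as historical motivation, so there is no internal proof to compare against and your proposal must be judged on its own. Its skeleton is the original weighted-words computation, and most of it checks out: the product form on the $A$-side, the decomposition of a $B$-partition with fixed color order into a minimal partition plus a staircase of increments contributing $1/(q;q)_s$, the value $E_{\min}(i,j,k)=\binom{u-k+1}{2}+\binom{v-k+1}{2}+uv+k$, and the reduction to the terminating identity, which after dividing by $q^{\binom{u+1}{2}+\binom{v+1}{2}}$ becomes $\sum_k q^{(u-k)(v-k)}\binom{u}{k}_q\binom{v}{k}_q(q;q)_k=1$ and is indeed a $q$-Chu--Vandermonde instance (I verified it for small $u,v$).

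The genuine gap is the mechanism you propose for the key lemma $\sum_c q^{E(c)}=q^{E_{\min}}\binom{s}{i,j,k}_q$. It is not true that an adjacent transposition of two distinct colors shifts $E(c)$ by a unit inversion weight that telescopes into the $q$-multinomial. Take content $(i,j,k)=(1,1,1)$, one part of each color; reading colors from the smallest part upward, the six minimal energies are $E(a,b,ab)=1+2+4=7$, $E(a,ab,b)=8$, $E(b,ab,a)=8$, $E(b,a,ab)=1+3+5=9$, $E(ab,a,b)=9$, $E(ab,b,a)=10$. The transposition $(a,b,ab)\to(b,a,ab)$ changes $E$ by $2$, not $1$ (the asymmetry $M_{a,b}-M_{b,a}=1$ is counted twice because it propagates to the part above), and no total order on $\{a,b,ab\}$ makes $E-E_{\min}$ the standard inversion number: the minimum forces $a<b<ab$, but then $(b,ab,a)$ has two inversions while $E-E_{\min}=1$. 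The multiset of values $\{0,1,1,2,2,3\}$ is nonetheless equidistributed with inversions, so the lemma itself is true --- but equidistribution is exactly what has to be proved, and the transposition argument does not deliver it. You need either Alladi--Gordon's actual evaluation of $\sum_c q^{E(c)}$ (done by induction/explicit summation in \cite{AG93}), or the bijective route you mention at the end --- Bressoud-style merging of an $a$-part and a $b$-part into an $ab$-part with the slack pushed onto larger parts --- which is precisely the mechanism (the operator $\Lambda$ as an energy transfer) that the present paper formalizes and generalizes.
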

We obtain the Schur theorem by applying the transformation $(q,a,b)\mapsto (q^3,q^{-2},q^{-1})$ in the latter identity.
In fact, the minimal difference conditions given in \eqref{eq:diffag} give after these transformations the minimal differences in Schur's theorem. 
\m The weighted words method appears as a major tool in the study of partition identities. On one hand, it allows us to have a better understanding of the partitions' structure, and gives a hint to find some suitable bijective proofs for the identities. On the other hand, one can generate an unlimited number of new identities by applying transformations on the colors. Subsequent works using this method led to the discovery of several new identities \cite{AAB03,AAG95,CL06,Dousse16,DK1,IK19,IK2}. 
\bi
Another rich source of partition identities is the representation theory of Lie algebras. This was initiated by the work of Lepowsky and Wilson \cite{LW84}, who proved the Rogers-Ramanujan identities
by using representations of level $3$ standard modules of the affine Lie algebra $A_1^{(1)}$. 
Subsequently, Capparelli \cite{C93}, Meurman-Primc \cite{MP87} and others examined
related standard modules and affine Lie algebras and found many new partition identities.
\m 
In \cite{Si02},  Siladi\'c gave the following partition identity in his study of representations of the twisted affine Lie algebra $A_2^{(2)}$.
\begin{theo}[Siladi\'c]\label{theo:siladic}
The number of partitions $\lambda_1+\cdots+\lambda_s$ of an integer $n$ into distinct odd parts 
is equal to the number of partitions of $n$, into parts different from $2$, such that 
$\lambda_i -\lambda_{i+1}\geq 5$ and 
\begin{align*}
\lambda_i -\lambda_{i+1}  = 5 &\Rightarrow \, \lambda_i+\lambda_{i+1} \equiv \pm 3 \mod 16\,,\\
\lambda_i -\lambda_{i+1}  = 6 &\Rightarrow \, \lambda_i+\lambda_{i+1}  \equiv 0,\pm 4, 8\mod 16\,,\\
\lambda_i -\lambda_{i+1}  = 7 &\Rightarrow \, \lambda_i+\lambda_{i+1}  \equiv \pm 1, \pm 5, \pm 7 \mod 16\,,\\
\lambda_i -\lambda_{i+1}  = 8 &\Rightarrow \, \lambda_i +\lambda_{i+1} \equiv 0,\pm 2,\pm 6, 8\mod 16\,\,\cdot
\end{align*}
\end{theo}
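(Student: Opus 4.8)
The plan is to prove \Thm{theo:siladic} not directly on integer partitions but by lifting it to the level of weighted words, exactly as \Thm{theo:schur} is obtained from \Thm{theo:ag} through the substitution $(q,a,b)\mapsto(q^{3},q^{-2},q^{-1})$. First I would fix a palette of colors adapted to the modulus $16$ in the statement: a family of primary colors together with all the secondary colors they generate, and a total order on the colored integers refining the natural order on $\N$. The side counting partitions into distinct odd parts, whose generating function is $(-q;q^{2})_\infty$, should lift to colored partitions into \emph{distinct} primary-colored parts---the ``free'' side that plays here the role of $(-aq;q)_\infty(-bq;q)_\infty$ in the Alladi--Gordon identity.

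The companion ``difference'' side would be the set of colored partitions obeying a matrix of minimal difference conditions in the spirit of \eqref{eq:diffag}, enlarged to include the secondary colors. The core of the argument is then a weight- and color-preserving bijection between the free side and the difference side. Rather than Bressoud's original insertion procedure, I would use the energy-transfer reformulation advertised in the abstract: regard a colored partition as a quantity of energy shared among states, and iteratively move quanta between consecutive parts to install the prescribed minimal gaps while keeping fixed the total energy $n$ and the multiplicities of each primary color. Showing that this transfer terminates, is reversible, and respects the order on colored integers is precisely what upgrades the Schur/Alladi--Gordon bijection to the present, richer palette.

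Finally I would specialize the colored identity by dilating $q$ and sending each primary color to a monomial $q^{e}$, the residue $e$ being fixed modulo $16$ and the dilation chosen so that the free side becomes $(-q;q^{2})_\infty$ while a part colored $c$ of colored-size $k$ maps to a plain integer lying in a fixed residue class mod $16$. Under this specialization the minimal difference matrix must collapse onto the arithmetic conditions of the theorem: a gap of $5,6,7$, or $8$ between consecutive parts should force the admissible color pairs to be exactly those whose residues sum, modulo $16$, to $\pm3$; to $0,\pm4,8$; to $\pm1,\pm5,\pm7$; and to $0,\pm2,\pm6,8$ respectively, while any larger gap imposes no congruence. The excluded part $2$ would be read off from a forbidden smallest colored part, as $1_{ab}$ is excluded in the Alladi--Gordon model.

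The main obstacle is this last reduction: one must check that the combinatorial matrix, after dilation mod $16$, reproduces the four congruence lists \emph{exactly}---neither admitting an extra residue sum nor missing a legitimate one for each of the gaps $5,6,7,8$---and that the bijection matches the two series not only in $n$ but in the refined color statistics. This is a finite but delicate case analysis over ordered pairs of colors and attainable differences, and the subtle points are the boundary configurations in which an energy transfer could threaten to create a gap below the minimum or an illegal color adjacency; controlling these is where the argument will demand the most care.
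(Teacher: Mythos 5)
Your plan is essentially the paper's own route: the statement is not proved directly but retrieved as a dilation of a weighted-words identity at degree two (primary colors plus all secondary colors), established by the energy-transfer variant of Bressoud's bijection that underlies \Thm{theo:degree2}, and then specialized --- in the paper, via \Cor{cor:2col} restricted to over-lined particles under $(q,a,b,c,d)\mapsto(q^4,q^{-1},q^{-3},1,0)$, which is exactly the ``dilate $q$ and send colors to powers of $q$'' step you describe, with the mod $16$ congruences read off from the secondary-color difference matrix. The obstacles you flag (termination and reversibility of the transfer, and the finite check that the difference matrix collapses exactly onto the four congruence lists) are precisely the points the paper addresses in Sections 3 and 4 and in the verification of \eqref{eq:diffover}.
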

This theorem has been refined by Dousse in \cite{Dousse16} where she used weighted words with two primary colors $a,b$ and three secondary colors $a^2,ab,b^2$. Starting from her refinement, the author was able to give in \cite{IK19} a generalization of Siladi\'c's theorem for an arbitrary number $n$ of primary colors $a_1,\ldots,a_n$ along with the set $\{a_ia_j: i,j =1,\ldots,n\}$ of all the $n^2$ secondary colors. He bijectively proved his identity, by using a brand new variant of the algorithm given by Bressoud in his bijective proof of Schur's identity \cite{BR80}.
\m In this paper, we aim at generalizing the result given \cite{IK19}, by using the statistic-mechanical viewpoint of  the integer partitions. 
\subsubsection{Integer partitions in statistical mechanics}
The connection between integer partitions and physics was first pointed out by Bohr and Kalckar \cite{BK37}.
In the same year, Van Lier and
Uhlenbeck noted the links between the problem of counting microstates of the systems obeying Bose or Fermi statistics and some problems related to integer partitions \cite{VU37}. 
\m 
Since then, a current approach in statistical mechanics consists in considering a partition of a given integer into parts with certain restrictions as a sharing of a fixed amount of energy among the different 
possible states of an assembly. This approach can be found in the seminal works of Auluck and Kothari \cite{AK46}, Temperley \cite{TP49} and Nanda \cite{Nan51}.
\bi 
In this paper, we view the weighted words in the framework of statistical mechanics.
We then refer to the colors as \textit{states}, and the sizes of parts as \textit{potentials}. To place the study of weighted words in a more general context, we first need to relax our conditions in the definition of integer partitions. 
\m Let $\C$ be a set of colors, and let $\Z_{\C} = \{k_c : k \in \Z, c \in \C\}$ be the set of colored integers. We recall that we identify the colors as states, and we now refer to the colored integers as energetic particles having a state and a potential.
\begin{deff}
Let $\gg$ be a binary relation defined on $\Z_{\C}$.
A \textit{generalized colored partition} with relation $\gg$ is a finite sequence $(\pi_1,\ldots,\pi_s)$ of energetic particles, where for all $i \in \{1,\ldots,s-1\},$ $\pi_i\gg\pi_{i+1}.$
\end{deff}
In the following, we denote by $c(\pi_i) \in \C$ the state of the particle $\pi_i$. The quantity $|\pi|=\pi_1+\cdots+\pi_s$ is the total size or \textit{Energy} of $\pi$, and $C(\pi) = c(\pi_1)\cdots c(\pi_s)$ is its color sequence or \textit{state}.\\\\
In the remainder of this paper, an order is a binary relation which is reflexive, anti-symmetric and transitive. For any order $\preceq$, one can associate a unique strict order $\prec$ such that $x\prec y$ is equivalent to $x\neq y$ and $x\preceq y$. An order is said to be total if any pair of element can be compared. By abuse of terminology, a strict total order is the strict order associated to a total order.  
\begin{rem}
The binary relation is not necessarily an order. When $\gg$ is a strict total order, we can easily check 
that every finite set of colored parts defines a generalized colored partition, by ordering the parts. In the same way, for a total order, the generalized colored partitions are finite multi-sets of colored integers.
\end{rem}
For example, if we set $\C=\{c\}$ to be a singleton, and the relation $\gg$ defined by
\[k_c\gg l_c \Longleftrightarrow k\geq l\,,\]
one can then see the classical partitions as the generalized partitions $(\pi_1,\ldots,\pi_s)$ such that the last particle $\pi_s$ has a positive potential. Using this definition, we can convey the minimal difference conditions in the weighted words as a relation $\gg$ defined on the set of particles.
\bi The main contribution of this paper will consist in viewing the variant of Bressoud's algorithm, used in the generalization of Siladi\'c's theorem in \cite{IK19}, as a process in which we operate energy transfers according to the states involved in the generalized colored partition. This viewpoint then allows us to see the difference conditions defined for the Siladi\'c theorem as some particular allowable differences between the potentials of consecutive particles. By taking a larger family of allowable differences between the potentials of consecutive particles, we generate an infinite family of identities generalizing the previous result on the Siladi\'c theorem.  
\bi    
\subsection{Statement of Results}
Let $\C$ be a set of states, countable or not,  and let $\Pp=\Z_\C$ be the corresponding set of particles. We recall that the \Par $k_c$ is identified by its potential $k$ and its state $c$. In the remainder of this paper, such a particle is called a \textit{primary} particle.
We consider a relation $\succ$ on $\Z_\C$, related to a certain energy (see \Def{def:minerg}), and we then define the set $\mathcal O$  to be the set of generalized colored partitions with relation $\succ$. 
\m We now define the set of secondary states 
 by $\C_2= \{cc': c,c'\in \C\}$, and we note that the secondary states are non-commutative products of two primary states, i.e. $cc'\neq c'c$ for $c\neq c' \in \C$. We extend this definition to degree $d$ for any $d\geq 1$. The set $\C_d$ of states with degree $d$ is the set of all the non-commutative products of $d$ primary states. We then have $\C_1=\C$, and we use the term "secondary" for degree $2$. The weighted words method is said to be \textit{at degree} $d$ if it only involves states with degree at most $d$.
\bi A \textit{secondary} particle with state $cc'$ is then defined to be a \textit{sum of two consecutive} primary particles, in terms of $\succ$, such that the greater particle (to the left of $\succ$) has color $c$ and the smaller particle (to the right of $\succ$) has color $c'$ (see \Def{def:secpar}). We denote by $\mathcal{S}$ the set of secondary particles. Defining a \textit{suitable} relation $\gg$ on the set of primary and secondary particles $\Pp\sqcup\mathcal S$ (see \Def{def:relation2}), we consider the set $\mathcal E$ of generalized colored partitions consisting of primary or secondary particles well-related by $\gg$.
 \begin{rem}
The color sequence (or state) of an element of $\mathcal O$ or $\mathcal E$  is a finite non-commutative product of primary states in $\C$.
 \end{rem}
 The main theorem of this paper then has the following formulation. 
 \begin{theo}\label{theo:main1}
  For any integer $n$ and any finite non-commutative product  $C$ of colors in $\C$,
 there exists a bijection between $\{\la \in \mathcal O: (C(\la),|\la|)=(C,n)\}$ and $\{\nu \in \mathcal E: (C(\nu),|\nu|)=(C,n)\}$. 
 \end{theo}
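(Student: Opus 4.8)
The plan is to realize the asserted correspondence as the energy-transfer form of Bressoud's algorithm, that is, to construct an explicit map $\Phi\colon\mathcal O\to\mathcal E$ together with an inverse $\Psi\colon\mathcal E\to\mathcal O$, both assembled from a single \emph{elementary move}. Since any bijection we produce must respect the two statistics appearing in \Thm{theo:main1}, the guiding principle is that every elementary move conserve both the total energy $|\cdot|$ and the color sequence $C(\cdot)$. Conservation of energy forces the move to merely \emph{transfer potential} between consecutive particles, leaving their sum unchanged; conservation of the color sequence forces the move to act on states only through the binding of two adjacent primary particles of states $c,c'$ into a single secondary particle of state $cc'$ (and its reverse, unbinding), since in both $\mathcal O$ and $\mathcal E$ the state is read off as the same non-commutative product of primary colors. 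These two constraints essentially dictate the shape of the move.

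First I would define $\Phi$ by processing the primary particles of $\la=(\la_1,\ldots,\la_s)\in\mathcal O$ inductively, for instance from largest to smallest potential, maintaining at each stage a generalized colored partition that already lies in $\mathcal E$. When the next (smaller) primary particle is inserted at the bottom of the current configuration, I compare it with the current smallest particle under the suitable relation $\gg$ of \Def{def:relation2}: if $\gg$ already holds, the particle is placed as is; otherwise I apply the elementary move, transferring potential and, when the states so require, binding the pair into a secondary particle governed by \Def{def:secpar}. Because a local repair may create a new violation further up the chain, the move is iterated. Here I would record the invariant that after each step the partial sequence belongs to $\mathcal E$, its energy equals the sum of the particles inserted so far, and its state is the corresponding prefix product, and prove it by induction on the number of inserted particles, using that the minimal-energy relation $\succ$ of \Def{def:minerg} controls exactly the admissible gaps that the move is allowed to reduce.

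Symmetrically, I would define $\Psi\colon\mathcal E\to\mathcal O$ by the reverse procedure: unbind each secondary particle into its two constituent primaries and transfer potential back so as to restore the stronger relation $\succ$, again iterating the reversed elementary move. The theorem then follows once one checks that $\Phi$ and $\Psi$ are mutually inverse, which reduces to the statement that the elementary move is itself a bijection between the configurations in which $\succ$ fails (two primaries too close) and those in which a secondary particle is present. Since both maps are built from the same conservative move, preservation of the pair $(C,n)$ is automatic, so $\Phi$ and $\Psi$ restrict to mutually inverse maps between the fibres $\{\la\in\mathcal O:(C(\la),|\la|)=(C,n)\}$ and $\{\nu\in\mathcal E:(C(\nu),|\nu|)=(C,n)\}$, which is the desired bijection.

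The main obstacle is twofold, and it is precisely where the hypothesis that $\gg$ is \emph{suitable} does the work. First, one must carry out the case analysis over the states $c,c'$ and the relevant potentials to show that the elementary move is deterministic, lands in the correct set ($\mathcal E$ for $\Phi$ and $\mathcal O$ for $\Psi$), and is reversible; this is the exact sense in which $\succ$ and $\gg$ must be compatible, so that a failure of $\succ$ corresponds bijectively to the presence of a secondary particle. Second, one must prove that the propagation of local repairs \emph{terminates}: I would do this by exhibiting a monotone quantity, such as the multiset of potentials ordered suitably, that strictly decreases under each iteration of the cascade while staying bounded below, so that only finitely many moves occur. Granting these two points, the maps $\Phi$ and $\Psi$ are well defined and mutually inverse, and \Thm{theo:main1} follows.
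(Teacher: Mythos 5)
Your proposal correctly isolates the two conservation laws (energy and colour sequence) and from them infers the right shape of the elementary move: a transfer of potential between adjacent particles together with the binding of a troublesome pair of primaries into a secondary particle. This matches the paper's construction, where the binding is Step 1 of $\Phi$ and the potential transfer is the involution $\Lambda$ on $\Pp\times\Sc\sqcup\Sc\times\Pp$. As written, however, the proposal is a plan rather than a proof, and the points it defers are precisely where the mathematical content lies; at least two of them are genuine gaps rather than routine verifications. The most serious one concerns $\Psi$: it cannot be obtained by ``running the move in reverse'' under the negation of the forward condition. In the paper, $\Phi$ applies $\Lambda$ to a pair $(p,s)$ whenever $p\not\odp s$, but $\Psi$ applies $\Lambda$ under a \emph{different} condition, namely $p\not\od\gamma(s)$, a comparison with the upper half of the secondary particle; the content of \Prp{prop:switch} is exactly that \eqref{eq:crossps} and \eqref{eq:crosspp} are two distinct, complementary statements, the first guaranteeing that the forward cascade makes progress and the second supplying the stopping rule for the inverse. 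Without an analogue of \eqref{eq:crosspp} you have no well-defined criterion for when to stop uncrossing and which secondary particles to unbind, so ``the reverse procedure'' is not yet a map.

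The second gap is the invariant ``the partial configuration lies in $\E$'': it is not preserved by purely local reasoning. A repair can push a newly formed secondary particle upward past several primaries until it becomes adjacent to another secondary particle, and no elementary move acts on a pair in $\Sc\times\Sc$; that two adjacent secondaries are then automatically well-related by $\odp$ is true but is a \emph{global} fact, which the paper proves (\Prp{prop:belE}) using the bookkeeping functions $\Delta$, $\alpha$, $\beta$ and the key observation (\Lem{lem:1}) that a primary particle moving from position $k$ to position $\sigma(k)$ gains exactly the formal energy of transfer $\Delta(\sigma(k),k)$. Relatedly, mutual inversion does not reduce to the elementary move being a bijection on single pairs: the paper must compute explicit final-position functions $\pt$ and $\psi$ (\Prp{prop:finalposphi}, \Prp{prop:finalpospsi}) and check that they induce the same interleaving of the index sets $I$ and $J$, and also that the outcome is independent of the order in which the moves are applied --- a confluence property that your insertion-driven cascade silently assumes and that the paper addresses by showing each mixed-degree pair crosses at most once. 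Your termination concern, by contrast, is the easiest part to repair, for exactly that reason. In short, the architecture you describe is the right one, but the proposal postpones the definitions and estimates that constitute the proof.
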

 \bi
 An explicit statement of the latter theorem is given in \Thm{theo:degree2}.
 For now, we give an example that will generalize Siladi\'c's theorem to overpartitions. Recall that an overpartition is a partition where we can over-line at most one occurrence of each positive integer \cite{CL04}. It has been a recurrent problem in partition theory to extend some partition identities to overpartitions \cite{DS17,DS14,DS16,DL19,L05,L17}.
\bi
Consider the set of colors $\C=\{\overline{b}<\overline{a}<a<b\}$ and the relation $\succ$ defined by the minimal difference conditions in the following matrix
 \begin{equation}
 D:=\quad\bordermatrix
 {\text{}&\overline{b}&\overline{a}&a&b 
 \cr \overline{b} &1&1&1&1 \cr \overline{a} &0&1&1&1  \cr a &0&0&0&1 \cr b &0&0&0&0}
 \end{equation}
These difference conditions imply that a partition in $\mathcal O$ can have any number of primary particles with a fixed potential and a non over-lined state, while there is at most one primary particle with a  fixed potential and an over-lined state. The partitions of $\mathcal O$ are then identified as the \textit{generalized overpartitions} whose definition is given by the following.
\begin{deff}
Let us fix a set of states $\C$. A generalized overpartition is a generalized partition where we are allowed to over-line at most one particle with a fixed potential and state.
\end{deff}
\begin{ex} The generalized partition $(1_a,1_{\overline{a}},1_{\overline{b}},0_b,0_b,0_a,0_a,0_{\overline{a}},0_{\overline{b}},-1_b,-1_{\overline{a}})$  belongs to $\mathcal O$, and corresponds to the generalized overpartition
$(1_a,\overline{1}_a,\overline{1}_b,0_b,0_b,0_a,0_a,\overline{0}_a,\overline{0}_b,-1_b,\overline{-1}_a)$.
\end{ex}
We then call the partitions in $\mathcal O$ the colored overpartitions, and this means that we can have any number of particles with a fixed potential and state, with at most one such particle over-lined. We observe that once a particle is over-lined, by the difference conditions in $D$, it no longer has the same order with respect to the other particles. For example, we have $1_b\succ 1_a$ but $\overline{1}_b\prec 1_a$. This is different from the usual convention, but the way we defined these relative orders plays a major role in the definition of the corresponding secondary particles.
\bi
We now define the relation $\gg$ by the minimal difference conditions in the following matrix
\begin{equation}\label{eq:diffover}
\begin{tiny}
D' := \quad \bordermatrix{
\text{}&\overline{b}&\overline{a}&a&b&\overline{b}^2&\overline{b}\overline{a}&\overline{b}a&\overline{b}b&\overline{a}\overline{b}&\overline{a}^2&\overline{a}a&\overline{a}b&a\overline{b}&a\overline{a}&a^2&ab&b\overline{b}&b\overline{a}&ba&b^2
\cr \overline{b}&2&2&2&2&2&2&2&2&1&2&2&2&1&1&1&2&1&1&1&1
\cr \overline{a}&1&2&2&2&1&1&1&1&1&2&2&2&1&1&1&2&1&1&1&1
\cr a&1&1&1&2&1&1&1&1&0&1&1&1&0&0&0&1&1&1&1&1
\cr b&1&1&1&1&1&1&1&1&0&1&1&1&0&0&0&1&0&0&0&0
\cr\overline{b}\overline{b}&3&3&3&3&4&4&4&4&3&4&4&4&3&3&3&4&3&3&3&3
\cr\overline{b}\overline{a}&2&3&3&3&2&2&2&2&3&4&4&4&3&3&3&4&3&3&3&3
\cr\overline{b}a&2&2&2&3&2&2&2&2&1&2&2&2&1&1&1&2&3&3&3&3
\cr\overline{b}b&2&2&2&2&2&2&2&2&1&2&2&2&1&1&1&2&1&1&1&1
\cr\overline{a}\overline{b}&1&1&1&1&3&3&3&3&2&3&3&3&2&2&2&3&2&2&2&2
\cr\overline{a}^2&2&3&3&3&2&2&2&2&3&4&4&4&3&3&3&4&3&3&3&3
\cr\overline{a}a&2&2&2&3&2&2&2&2&1&2&2&2&1&1&1&2&3&3&3&3
\cr\overline{a}b&2&2&2&2&2&2&2&2&1&2&2&2&1&1&1&2&1&1&1&1
\cr a\overline{b}&1&1&1&1&3&3&3&3&2&3&3&3&2&2&2&3&2&2&2&2
\cr a\overline{a}&1&2&2&2&1&1&1&1&2&3&3&3&2&2&2&3&2&2&2&2
\cr a^2&1&1&1&2&1&1&1&1&0&1&1&1&0&0&0&1&2&2&2&2
\cr ab&2&2&2&2&2&2&2&2&1&2&2&2&1&1&1&2&1&1&1&1
\cr b\overline{b}&1&1&1&1&3&3&3&3&2&3&3&3&2&2&2&3&2&2&2&2
\cr b\overline{a}&1&2&2&2&1&1&1&1&2&3&3&3&2&2&2&3&2&2&2&2
\cr ba&1&1&1&2&1&1&1&1&0&1&1&1&0&0&0&1&2&2&2&2
\cr b^2&1&1&1&1&1&1&1&1&0&1&1&1&0&0&0&1&0&0&0&0
}
\end{tiny}
\end{equation}
 By definition, the secondary particles with state $cc'$ then have a potential with the same parity as the entry of $D$ corresponding to the line $c$ and the column $c'$. Therefore, we have the following correspondence for secondary states:
\begin{equation}\label{eq:overcolor}
\bordermatrix{\text{} &\overline{b}&\overline{a}&a&b 
 \cr \overline{b} &b^2_{odd}&ba_{odd}&\overline{ba}_{odd}&\overline{b}^2_{odd} \cr \overline{a} &ab_{even}&a^2_{odd}&\overline{a}^2_{odd}&\overline{ab}_{odd} \cr a &\overline{ab}_{even}&\overline{a}^2_{even}&a^2_{even}&ab_{odd} \cr b &\overline{b}^2_{even}&\overline{ba}_{even}&ba_{even}&b^2_{even}}
\,, 
\end{equation}
where $c_{parity}$ refers to a particle with state $c$ and potential with the same parity as the index $parity$.
Here again, the generalized partitions in $\mathcal E$ can be identified as some generalized overpartitions for the set of colors $\{a,b,a^2,ab,ba,b^2\}$.  
We now state the corresponding corollary to \Thm{theo:main1}. To simplify the formulation of the corollary, we assume that the symbols $a,b$ and $c$ commute in the generating functions.  
\begin{cor}\label{cor:2col}
Let $u,v,w$ and $n$ be non-negative integers. Let us denote by 
$A(n;u,v,w)$ the number of colored overpartitions of size $n$ with positive potentials and colors in $\{a,b\}$, with  
$u$ particles with color $a$, $v$ particles with color $b$ and 
$w$ over-lined particles.
Let us denote by 
$B(n;u,v,w)$ the number of colored overpartitions of size $n$ with colors in $\{a,b,a^2,ab,ba,b^2\}$, with positive potential for the primary particles and potential greater than one for the secondary particles, satisfying the minimal difference conditions given by $D'$, with
$u$ occurrences of the symbol $a$, $v$ occurrences of the symbol $b$, and such that $w$ equals the number of over-lined particles plus twice the number of even particles with color $ab$ and odd particles with color $a^2,ba$ or $b^2$.
We then have $A(n;u,v,w)=B(n;u,v,w)$ and the identity
\begin{equation}
\sum_{n,u,v,w\geq 0} B(n;u,v,w) a^u b^v c^w d^{u+v-w} q^n = \sum_{n,u,v,w\geq 0} A(n;u,v,w) a^u b^v c^w  d^{u+v-w}q^n=\frac{(-acq;q)_\infty(-bcq;q)_\infty}{(adq;q)_\infty(bdq;q)_\infty}\,\cdot 
\end{equation}
\end{cor}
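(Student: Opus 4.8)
The plan is to read the right-hand product as the generating function for the set $\mathcal O$ restricted to positive potentials (the ``$A$-side''), and then transport the enumeration to $\mathcal E$ through the bijection of \Thm{theo:main1}, which preserves both the color sequence $C$ and the energy $n$. The whole argument hinges on a single structural observation: on \emph{both} sides the three statistics $u,v,w$ are functions of the color sequence $C$ alone, so the termwise equality $A(n;u,v,w)=B(n;u,v,w)$ will follow from a color- and energy-preserving bijection without any further bookkeeping.

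First I would compute the $A$-side generating function directly. A colored overpartition in $\mathcal O$ with positive potentials splits into four independent streams indexed by the primary states $a,b,\overline a,\overline b$: the non-over-lined states $a,b$ may occur with arbitrary multiplicity at each potential $k\geq 1$, contributing $\prod_{k\geq1}(1-adq^k)^{-1}=(adq;q)_\infty^{-1}$ and $(bdq;q)_\infty^{-1}$, while the over-lined states $\overline a,\overline b$ occur at most once per potential, contributing $(-acq;q)_\infty$ and $(-bcq;q)_\infty$. Here $a,b$ weight the underlying color, $c$ weights over-lining (giving $c^w$), and $d$ weights non-over-lined occurrences (giving $d^{u+v-w}$). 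This reproduces the stated product and identifies it with $\sum A(n;u,v,w)\,a^ub^vc^wd^{u+v-w}q^n$.

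Next I would verify that $u,v,w$ are read off $C$ identically on both sides. Writing $C$ as a word in $\{a,\overline a,b,\overline b\}$, on the $\mathcal O$-side $u$ counts the $a$-type letters ($a$ or $\overline a$), $v$ the $b$-type letters, and $w$ the over-lined letters. On the $\mathcal E$-side a secondary particle with state $cc'$ contributes the two primary letters $c,c'$ to $C$, and using \eqref{eq:overcolor} I would check case by case that the number of $a$'s (resp. $b$'s) in its displayed name equals the number of $a$-type (resp. $b$-type) letters among $c,c'$, so the $\mathcal E$-side $u,v$ again count $a$-type and $b$-type letters of $C$. The delicate point is $w$: I would show that the $\mathcal E$-side prescription — over-lined particles counted once, plus even particles of color $ab$ and odd particles of color $a^2,ba$ or $b^2$ counted twice — equals the total number of over-lined letters of $C$. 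By \eqref{eq:overcolor} the singly-over-lined secondary states $\overline b a,\overline b b,\overline a a,\overline a b,a\overline b,a\overline a,b\overline b,b\overline a$ are precisely those displayed with an over-line, each carrying one over-lined letter; the doubly-over-lined states $\overline a\overline b,\overline a\overline a,\overline b\overline a,\overline b\overline b$ are, by the parity convention, exactly the even-$ab$ and odd-$a^2,ba,b^2$ particles (the parity distinguishing them from their non-over-lined namesakes $ab_{odd},a^2_{even},ba_{even},b^2_{even}$), each carrying two over-lined letters; and the remaining four states carry none. Summing, the $\mathcal E$-side $w$ equals the number of over-lined letters of $C$, matching the $\mathcal O$-side.

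Finally I would invoke \Thm{theo:main1}: for each fixed $(C,n)$ it gives a bijection between the corresponding fibers of $\mathcal O$ and $\mathcal E$, and since $(u,v,w)$ depends only on $C$, summing over all $C$ with prescribed $(u,v,w)$ yields $A(n;u,v,w)=B(n;u,v,w)$ termwise; the generating-function identity then follows from the $A$-side computation. I expect the main obstacle to be twofold: the case analysis confirming the $w$-correspondence through \eqref{eq:overcolor}, and checking that the bijection of \Thm{theo:main1} carries the positivity restriction on $\mathcal O$ onto the stated bounds on $\mathcal E$ (positive potentials for primary particles, potentials at least two for secondary particles) — a compatibility I would extract from the explicit grounded construction underlying \Thm{theo:degree2}.
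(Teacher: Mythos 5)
Your proposal is correct and follows essentially the same route the paper intends: specialize the main bijection (in its explicit form, \Thm{theo:degree2} with $\rho_+=1_+$, which already supplies the positivity compatibility you flag at the end) to the overpartition energy of \Expl{ex:over} with $\C'=\{a<b\}$, translate the statistics $u,v,w$ through the color dictionary \eqref{eq:overcolor} — your case analysis of the singly- and doubly-over-lined secondary states is the right check and comes out as you describe — and compute the product side directly on $\mathcal O$. The paper leaves these details implicit, and your write-up fills them in accurately.
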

In the previous corollary, if we restrict the partitions in $\mathcal O$ to those with only over-lined particles, i.e. $u+v=w$, and by applying the transformations $(q,a,b,c,d)\mapsto(q^4,q^{-1},q^{-3},1,0)$, we retrieve the identity given by Siladi\'c in \Thm{theo:siladic}.
\m On the other hand, by restricting the partitions in $\mathcal O$ to those with only non over-lined particles, i.e. $w=0$, and by applying the transformations $(q,a,b,c,d)\mapsto(q^4,q^{-3},q^{-1},0,1)$, we obtain the following analogous theorem of Siladi\'c's identity.
\begin{theo}\label{theo:varsiladic}
The number of partitions $\lambda_1+\cdots+\lambda_s$ of an integer $n$ into odd parts 
is equal to the number of partitions of $n$ such that
\begin{align*}
\lambda_i -\lambda_{i+1}  = 0 &\Rightarrow \, \lambda_i+\lambda_{i+1}  \equiv \pm 4 \mod 16\,,\\
\lambda_i -\lambda_{i+1}  = 1 &\Rightarrow \, \lambda_i +\lambda_{i+1} \equiv \pm 3 \mod 16\,,\\
\lambda_i -\lambda_{i+1}  = 2 &\Rightarrow \, \lambda_i +\lambda_{i+1} \equiv \pm 2,\pm 6 \mod 16\,,\\
\lambda_i -\lambda_{i+1}  = 3 &\Rightarrow \, \lambda_i +\lambda_{i+1} \equiv \pm 1,\pm 5,\pm 7\mod 16\,\,\cdot
\end{align*}
\end{theo}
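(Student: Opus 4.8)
The plan is to obtain \Thm{theo:varsiladic} as the specialization of \Cor{cor:2col} announced above: first set $c=0$ (equivalently $w=0$), then apply $(q,a,b,c,d)\mapsto(q^4,q^{-3},q^{-1},0,1)$, and finally read both specialized sides combinatorially. Since \Cor{cor:2col} already supplies the equality of the two generating series, the only work is to identify each side with the partition count in the statement.

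For the left-hand side, putting $c=0$ annihilates $(-acq;q)_\infty$ and $(-bcq;q)_\infty$ and forces $w=0$, so the series collapses to $1/\big((adq;q)_\infty(bdq;q)_\infty\big)$. Under the substitution a color-$a$ particle of potential $k\ge 1$ is weighted by $q^{4k-3}$ and a color-$b$ one by $q^{4k-1}$, whence the product becomes $1/\big((q;q^4)_\infty(q^3;q^4)_\infty\big)=1/(q;q^2)_\infty$. As $4k-3$ and $4k-1$ sweep exactly the classes $1$ and $3$ modulo $4$, i.e. all odd positive integers, forgetting the color turns these objects into ordinary partitions into odd parts, the color being recoverable from the part modulo $4$; this settles the odd-part side.

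For the right-hand side I would first record, from \eqref{eq:overcolor}, that $w=0$ retains only the six states $a,b$ and $a^2,ab,ba,b^2$, with $a^2,ba,b^2$ of even potential and $ab$ of odd potential, and then write the induced part map $k_c\mapsto 4k-\delta_c$ with $(\delta_a,\delta_b,\delta_{a^2},\delta_{ab},\delta_{ba},\delta_{b^2})=(3,1,6,4,4,2)$. Taking into account the parities and the constraint $k>1$ on secondary potentials, this map sends the six states bijectively onto the classes $1,3\bmod 4$ (odd parts) and $0,2,4,6\bmod 8$ (even parts), so that the color of a part is determined by the part itself. Consequently, deleting the colors is a bijection from $\mathcal E$ restricted to $w=0$ onto a set of ordinary partitions (the induced order being decreasing since all the minimal gaps below are nonnegative), and it remains to show that the relation $\gg$ encoded by the six-by-six submatrix of \eqref{eq:diffover} on these states is equivalent to the four difference-sum conditions of the statement.

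Concretely, for each ordered pair $(c,c')$ I would compute the minimal admissible part-gap $\Delta_{c,c'}=4D'_{c,c'}-\delta_c+\delta_{c'}$; since the coefficient of $k-l$ is $4$, every admissible consecutive pair realizes a part-difference $\Delta_{c,c'}+4j$ for some $j\ge 0$, so inside $\mathcal E$ a part-difference in $\{0,1,2,3\}$ occurs only at $j=0$ and only for the pairs achieving $\Delta_{c,c'}\in\{0,1,2,3\}$. Evaluating the corresponding sums $\lambda_i+\lambda_{i+1}=4(k+l)-\delta_c-\delta_{c'}$ modulo $16$, with the fixed parities of $k,l$, yields precisely $\pm4,\ \pm3,\ \pm2,\pm6,\ \pm1,\pm5,\pm7$ for differences $0,1,2,3$ respectively, which matches the statement; this is the forward inclusion. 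The step I expect to be the main obstacle is the converse: an ordinary partition meeting the four conditions must arise from an element of $\mathcal E$, i.e. every consecutive pair must satisfy $k-l\ge D'_{c,c'}$, and this is not automatic because several $\Delta_{c,c'}$ exceed $4$ (e.g. $\Delta_{a,b}=6$, $\Delta_{b,ab}=7$, $\Delta_{ab,ab}=8$). The resolution is structural: for each pair there is at most one forbidden part-difference, namely $\Delta_{c,c'}-4$ or $\Delta_{c,c'}-8$ according as a primary state is involved or not, and in every case this value lies in $\{0,1,2,3\}$; hence the forbidden pairs are governed by the difference-sum conditions, and a short residue computation shows that their sums never meet the admissible congruences and are therefore excluded by the statement. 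Carrying out this verification uniformly over the six-by-six table, together with the remark that differences $\ge 4$ constrain neither side, completes the identification and hence the proof.
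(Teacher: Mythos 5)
Your proposal follows exactly the route the paper takes: \Thm{theo:varsiladic} is obtained by restricting \Cor{cor:2col} to $w=0$ and applying the dilation $(q,a,b,c,d)\mapsto(q^4,q^{-3},q^{-1},0,1)$, the paper leaving the combinatorial dictionary implicit. Your elaboration of that dictionary --- the residue classes $1,3 \bmod 4$ and $0,2,4,6 \bmod 8$ recovering the colors, and the verification that the unique forbidden gap $\Delta_{c,c'}-4$ (or $\Delta_{c,c'}-8$ in the secondary--secondary case) always lies in $\{0,1,2,3\}$ with an inadmissible sum modulo $16$ --- is correct and fills in the details the paper omits.
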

\begin{ex}
For $n=10$, the partitions of $n$ into odd parts are
\[(9,1),(7,3),(7,1,1,1),(5,5),(5,3,1,1),(5,1,1,1,1,1),(3,3,3,1),(3,3,1,1,1,1)\]
\[(3,1,1,1,1,1,1,1)\text{ and } (1,1,1,1,1,1,1,1,1,1)\]
and the partitions of given by \Thm{theo:varsiladic} are
\[(10),(9,1),(8,2),(7,3),(7,2,1),(6,4),(6,2,2),(5,2,2,1),(4,2,2,2)\text{ and } (2,2,2,2,2)\,\cdot\]  
\end{ex}
\begin{rem}
For Siladi\'c's theorem, since we have $\overline{b}<\overline{a}$, we do the transformation $(a,b)\mapsto (q^{-1},q^{-3})$ to keep the order, while for the analogous theorem, we have $a<b$ and we then apply $(a,b)\mapsto (q^{-3},q^{-1})$.
\end{rem}
\bi 
The remainder of the paper is organized as follows. We first present in \Sct{sect:setup} the key tools and state explicitly the main result of this paper, namely \Thm{theo:degree2}. Then in \Sct{sect:maps}, we give the two bijections for \Thm{theo:degree2} which are inverse to each other. After that, in \Sct{sect:proof}, we prove the well-definedness of the bijections. Finally, in \Sct{sect:remarks}, we close with some remarks and we make the connection with the second part of this series of papers.
\section{The setup}\label{sect:setup}
Let $\C$ be a set of states, countable or not. We recall the set of primary particles $\Z_\C$, which we also denote by $\Pp=\Z \times \C$. In the following, a primary particle with potential $k$ and state $c$ is identified as $k_c$ or $(k,c)$.
\begin{deff}\label{def:minerg}
A \textit{minimal energy} is a function $\ep$  from $\C^2$ to $\{0,1\}$. The term minimal here refers to energies with values in $\Z_{\geq 0}$, as $0$ and $1$ are the smallest non-negative integers. When $\C=\{c_1,\ldots,c_n\}$ is a finite set, the data given by $\ep$ is equivalent to the matrix $M_\ep = (\ep(c_i,c_j))_{i,j=1}^n$, which we call the \textit{energy matrix} for $\ep$.
\m
We say that a minimal energy is \textit{transitive} if 
it satisfies the \Ti:
\begin{equation}
 \forall c,c',c''\in \C\,, \quad \ep(c,c'')\leq \ep(c,c')+\ep(c',c'')\,\cdot
\end{equation}
Let $c_1,\ldots,c_t$ be a sequence of primary states. We then define the \textit{energy of transfer} from $c_1$ to $c_t$ to be the sum of the intermediate minimal energies:
\begin{equation}\label{eq:entrans}
\sum_{i=1}^{t-1} \ep(c_i,c_{i+1})\,\cdot
\end{equation}
\end{deff}
\begin{rem}
Note that if $\ep$ is a (transitive) minimal energy, then $\ep^* : (c,c') \mapsto \ep(c',c)$ is also a (transitive) minimal energy. Furthermore, if $\C$ is finite, the energy matrix $M_{\ep^*}$ is then the transpose of the energy matrix $M_\ep$.
\end{rem}
In the remainder of this paper, we consider $\ep$ to be a minimal energy.
\begin{deff}
The \textit{energy relation} $\od$ with respect to $\ep$ is the binary relation on $\Pp^2$ defined by
\begin{equation}\label{rel}
 (k,c)\od (k',c') \Longleftrightarrow \, k-k'\geq \ep(c,c')\,\cdot
\end{equation} 
This relation is transitive if and only if $\ep$ is transitive.
\end{deff}
\begin{exs}\label{ex:tri}
Let $\C=\{c_1,\ldots,c_n\}$ be a set of states.
For any proposition $A$, set $\chi(A)=1$ if $prop$ is true and $\chi(A)=0$ otherwise. 
\begin{enumerate}
 \item For $\ep(c_i,c_j) = \chi(i<j)$, we can set on $\C$ the strict order $c_1<\cdots<c_n$ and the energy relation $\od$ becomes the lexicographic order on $\Pp$: 
 \[\cdots\od (k+1)_{c_1}\od k_{c}\od k_{c_n}\od k_{c_{n-1}}\od k_{c_{n-1}}\od\cdots \od k_{c_2} \od k_{c_2}\od k_{c_1}\od k_{c_1}\od \cdots\,\cdot\]
 Here, ordering $k_{c_i}\od k_{c_i}$ simply indicates a possible repetition of the part $k_{c_i}$. 
 The corresponding energy matrix is given by
 \[M_\ep = 
 \bordermatrix{
 \text{}&c_1&c_2&\cdots&c_{n-1}&c_n
 \cr c_1 &0&1&\cdots&1&1
 \cr c_2&0&0&\ddots&1&1
 \cr \vdots&\vdots&\vdots&\ddots&\ddots&\vdots
 \cr c_{n-1}&0&0&\cdots&0&1
 \cr c_n&0&0&\cdots&0&0
 }\,\cdot\]
 \item For $\ep(c_i,c_j) = \chi(i\leq j)$, using the previous ordering on $\C$, the energy relation $\od$ is the strict lexicographic order on $\Pp$:
 \[\cdots\od (k+1)_{c_1}\od k_{c_n}\od k_{c_{n-1}}\od \cdots\od k_{c_2}\od k_{c_1}\od \cdots\,\cdot\]
 Here, we do not have a repetition of the parts $k_{c_i}$. 
 The corresponding energy matrix is given by
 \[M_\ep = 
 \bordermatrix{
 \text{}&c_1&c_2&\cdots&c_{n-1}&c_n
 \cr c_1 &1&1&\cdots&1&1
 \cr c_2&0&1&\cdots&1&1
 \cr \vdots&\vdots&\ddots&\ddots&\vdots&\vdots
 \cr c_{n-1}&0&0&\ddots&1&1
 \cr c_n&0&0&\cdots&0&1
 }\,\cdot\]
\end{enumerate}
\end{exs}
\begin{ex}\label{ex:over}
Let $\C'=\{c_1,\cdots,c_n\}$  be a set of states. If we set $\overline{\C}'=\{\overline{c}:c\in \C'\}$ and $\C=\C'\sqcup \overline{C}'$ we can then define $\ep$ on $\C^2$, for any $i,j\in \{1,\cdots,n\}$, by the following:
 \begin{enumerate}
  \item $\ep(c_i,c_j) = \chi(i<j)\,,$
  \item $\ep(c_i,\cc_j) = 0\,,\,\ep(\cc_i,c_j) = 1\,,$
  \item $\ep(\cc_i,\cc_j)=\chi(i\geq j)\,\cdot$
 \end{enumerate}
The relation $\od$ is then an order on $\Z_{\C}$, where over-lined colored particles can occur at most once in any ordered chain:
\[\cdots\od (k+1)_{\cc_n}\od k_{c_n}\od k_{c_n}\od k_{c_{n-1}}\od\cdots \od k_{c_2}\od k_{c_1}\od k_{c_1}\od  k_{\cc_1}\od k_{\cc_2}\od \cdots k_{\cc_{n-1}}\od k_{\cc_n} \od \cdots\,\cdot \]
The latter inequalities give some generalized colored partitions that can be identified as overpartitions.
The corresponding energy matrix is given by
 \[M_\ep = 
 \bordermatrix{
 \text{}&\cc_n&\cdots&\cc_1&c_1&\cdots&c_n
 \cr \cc_n &1&\cdots&1&1&\cdots&1
 \cr \vdots&\vdots&\ddots&\vdots&\vdots&1^{\star}&\vdots
 \cr \cc_1&0&\cdots&1&1&\cdots&1
 \cr c_1&0&\cdots&0&0&\cdots&1
 \cr \cdots& \vdots&0^{\star}&\vdots&\vdots&\ddots&\vdots
 \cr c_n&0&\cdots&0&0&\cdots&0
 }\,\cdot\]
 Note that the examples given in \Expls{ex:tri} respectively correspond to the restriction to $\{c_1,\ldots,c_n\}$ in the first case, and the restriction to $\{\cc_n,\ldots,\cc_1\}$, with $\cc_i\equiv c_{n+1-i}$ in the second case.
\m We also remark that for $\C'=\{a<b\}$, we retrieve the primary particles used in \Cor{cor:2col}.  
\end{ex}
\begin{ex}\label{ex:twister}
Let us consider $\C=\{a,b\}$, and the minimal energy $\ep$ given by the following energy matrix:
\[
M_\ep = \bordermatrix{
\text{}&a&b
\cr a &1&0
\cr b &0&1
}\,\cdot\] 
The relation $\ep$ is not transitive, as we have $\ep(a,a)>\ep(a,b)+\ep(b,a)$. The well-ordered sequences of particles with the same potential have the form
\[\cdots \od k_a\od k_b\od k_a\od k_b\od \cdots\,\cdot\]
\end{ex} 
\bi We recall that a secondary state is the product of two primary states. The key idea is to build secondary particles starting from the primary particles. The following definition permits a suitable construction for these secondary particles.
\begin{deff}\label{def:secpar}
We define the \textit{secondary particles} as sums of two consecutive primary particles in terms of $\od$. We denote by $\Sc = \Z\times \C^2$ the set of secondary particles, in such a way that the particle
\begin{equation}
 (k,c,c') = (k+\ep(c,c'),c)+(k,c') 
\end{equation}
has potential $2k+\ep(c,c')$ and state $cc'$. In fact, $(k+\ep(c,c'),c)$ is exactly the primary particle  of state $c$ with smallest potential, which is well-related to $(k,c')$ in terms of $\od$.
We then set the functions $\gamma$ and $\mu$ on $\Sc$, defined by
\begin{equation}
 \gamma(k,c,c') = (k+\ep(c,c'),c)\text{ and }\mu(k,c,c') = (k,c')\,,
\end{equation} 
to be respectively the \textit{upper} and \textit{lower} halves of $(k,c,c')$. In the following, we identify  a secondary particle as $(k,c,c')$ or $(2k+\ep(c,c'))_{cc'}$.
\end{deff}
\begin{ex}
Let us take  $\C=\{a,\overline{a}\}$ in \Expl{ex:over}. We then have 
\[
\bordermatrix{
\text{}&\overline{a}&a
\cr \overline{a}& 1&1
\cr a&0&0
}
\,\]
and we obtain with \Def{def:secpar} and \eqref{eq:overcolor} the following secondary particles:
\[\begin{cases}
(k,a,a) = 2k_{a^2}\,,\\
(k,a,\overline{a}) = 2k_{a\overline{a}}\equiv \overline{2k}_{a^2}\,,
\\(k,\overline{a},a) = 2k+1_{\overline{a}a} \equiv \overline{2k+1}_{a^2}\,,
\\(k,\overline{a},\overline{a}) = 2k+1_{\overline{a}^2}\equiv 2k+1_{a^2}\,\cdot
\end{cases}\]   
\end{ex}
We now build a relation on the set $\Pp\sqcup \Sc$ of primary and secondary particles.
\begin{deff}\label{def:relation2}
We define the relation $\odp$ on $\Pp\sqcup \Sc$ as follows: 
\begin{enumerate}
 \item Two primary particles of $\Pp$ are well-ordered by $\odp$ if and only if they are well-ordered but not consecutive in terms of $\od$: 
 \begin{equation}\label{pp}
(k,\textcolor{red}{c})\odp (k',\textcolor{foge}{c'}) \Longleftrightarrow k-k' > \ep(\textcolor{red}{c},\textcolor{foge}{c'})\,\cdot
\end{equation}
\item A primary particle of $\Pp$ is well-ordered with a secondary particle of $\Sc$ if and only if their potentials' difference  is at least equal to the energy of transfer from the first to the last primary states: 
\begin{equation}\label{ps}
(k,\textcolor{red}{c})\odp(k',\textcolor{foge}{c'},\textcolor{blue}{c''}) \Longleftrightarrow k - (2k'+\ep(\textcolor{foge}{c'},\textcolor{blue}{c''}))\geq \ep(\textcolor{red}{c},\textcolor{foge}{c'})+\ep(\textcolor{foge}{c'},\textcolor{blue}{c''})\,\cdot
\end{equation}
\item A secondary particle of $\Sc$ is well-ordered with a primary particle of $\Pp$ if and only if their potentials' difference  is greater than the transfer energy (from first to last state): 
\begin{equation}\label{sp}
(k,\textcolor{red}{c},\textcolor{foge}{c'})\odp (k',\textcolor{blue}{c''})\Longleftrightarrow (2k+\ep(\textcolor{red}{c},\textcolor{foge}{c'}))-k' > \ep(\textcolor{red}{c},\textcolor{foge}{c'})+\ep(\textcolor{foge}{c'},\textcolor{blue}{c''})\,\cdot
\end{equation}
\item Two secondary particles of $\Sc$ are well-ordered by $\odp$ if and only if the lower half of the first one is greater than the upper half of the second in terms of $\od$:
\begin{equation}\label{ss}
(k,\textcolor{red}{c},\textcolor{foge}{c'})\odp (k',\textcolor{blue}{c''},\textcolor{purple}{c'''}) \Longleftrightarrow \mu(k,\textcolor{red}{c},\textcolor{foge}{c'})\od \gamma(k',\textcolor{blue}{c''},\textcolor{purple}{c'''})\,\cdot
\end{equation}
This is equivalent to saying that the potentials' difference $k-k'$ is at least equal to the energy of transfer $\ep(\textcolor{foge}{c'},\textcolor{blue}{c''})+\ep(\textcolor{blue}{c''},\textcolor{purple}{c'''})$.
\end{enumerate}
\end{deff}
One can check that for $\C'=\{a<b\}$ and the minimal energy $\ep$ described in \Expl{ex:over}, the relations in the latter definition exactly give the minimal difference conditions presented in \eqref{eq:diffover}. 
\begin{rem}
We notice that 
\begin{equation}\label{cons}
 (k,c)\od(k',c')\text{ and }(k,c)\not\odp(k',c')\Longleftrightarrow k-k'=\ep(c,c')\,\cdot
\end{equation}
Such pair of primary particles is called a troublesome pair.
\end{rem}
\begin{deff}\label{def:rho}
We define $\Odd$ (respectively $\E$) to be the set of all generalized colored partitions with particles in $\Pp$ (respectively $\Pp\sqcup\Sc$)
and relation $\od$ (respectively $\odp$). 
\bi
For $\rho\in\{0,1\}$, we consider the following sets:
\begin{itemize}
 \item $\Pp^{\rho_+} =\Z_{\geq \rho}\times\C \text{  and  } \Sc^{\rho_+} = \Z_{\geq \rho}\times\C^2 =\{(k,c,c')\in \Sc:\,k\geq \rho\}$,
 \item $\Pp^{\rho_-} =\Z_{\leq \rho}\times\C \text{  and  } \Sc^{\rho_-} =\{(k,c,c')\in \Sc:\,k+\ep(c,c')\leq \rho\}$.
\end{itemize}
We then denote by $\Odd^{\rho_+}$ (respectively $\Odd^{\rho_-}$) the subset of $\Odd$ of generalized colored partitions with particles in $\Pp^{\rho_+}$ (respectively $\Pp^{\rho_-}$), and by 
$\E^{\rho_+}$ (respectively $\E^{\rho_-}$) the subset of $\E$ of generalized colored partitions with particles in $\Pp^{\rho_+}\sqcup \Sc^{\rho_+}$ (respectively $\Pp^{\rho_-}\sqcup \Sc^{\rho_-}$).
\bi
Since the secondary states are products of two primary states, the states of partitions in $\Odd$ and $\E$ are then seen as a finite \textit{non-commutative} product of primary states in $\C$.
\end{deff}
We now present the main result of this paper.
\begin{theo}\label{theo:degree2}
 For any integer $n$ and any state $C$ as a finite non-commutative product of states in $\C$,
 there exists a bijection between $\{\la \in \Odd: (C(\la),|\la|)=(C,n)\}$ and $\{\nu \in \E: (C(\nu),|\nu|)=(C,n)\}$. 
 In particular, for $\rho \in \{0,1\}$, we have the identities
 \begin{align}
 |\{\nu \in \E^{\rho_+}: (C(\nu),|\nu|)=(C,n)\}| &= |\{\la \in \Odd^{\rho_+}: (C(\la),|\la|)=(C,n)\}|\,,\\
 |\{\nu \in \E^{\rho_-}: (C(\nu),|\nu|)=(C,n)\}| &= |\{\la \in \Odd^{\rho_-}: (C(\la),|\la|)=(C,n)\}|\,\cdot
 \end{align}
\end{theo}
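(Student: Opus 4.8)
The plan is to prove \Thm{theo:degree2} by exhibiting two explicit maps, a \emph{contraction} $\Phi\colon\Odd\to\E$ and an \emph{expansion} $\Psi\colon\E\to\Odd$, and showing that they preserve both the total energy $|\cdot|$ and the color sequence $C(\cdot)$ and are mutually inverse. The two invariants are immediate at the level of the elementary local move underlying both maps: by \Def{def:secpar}, merging a troublesome pair $(k,c),(k',c')$ (that is, a pair with $k-k'=\ep(c,c')$, as characterized in \eqref{cons}) into the secondary particle $(k',c,c')$ replaces two primary potentials summing to $2k'+\ep(c,c')$ by a single secondary potential equal to $2k'+\ep(c,c')$, and records the two states in order as $cc'$; the inverse split $(k,c,c')\mapsto \gamma(k,c,c'),\mu(k,c,c')$ recovers a troublesome pair. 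Hence it suffices to build $\Phi$ and $\Psi$ out of such local moves, to establish their well-definedness (that the outputs lie in $\E$ and $\Odd$ respectively), and to check $\Psi\circ\Phi=\mathrm{id}$ and $\Phi\circ\Psi=\mathrm{id}$. The two $\rho_\pm$-identities then follow once one observes that these local moves preserve the potential bounds defining $\Pp^{\rho_\pm}$ and $\Sc^{\rho_\pm}$.

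Next I would define the contraction $\Phi$ as a deterministic iterative procedure: scan $\la\in\Odd$ for troublesome pairs and resolve them by forming secondary particles. Because a troublesome pair violates the primary--primary rule \eqref{pp}, the image can lie in $\E$ only if no troublesome adjacency survives; the content of the algorithm is to specify, for maximal runs of consecutive troublesome pairs (chains of tightly packed primaries), how the particles are paired and how the leftover potential is redistributed so that the newly created secondary particles satisfy the primary--secondary, secondary--primary and secondary--secondary rules \eqref{ps}, \eqref{sp}, \eqref{ss} with their neighbors. A direct computation already shows that this redistribution is unavoidable: in a chain of three troublesome primaries, neither merging the top pair nor the bottom pair alone yields a valid $\odp$-adjacency unless an additional potential shift --- an \emph{energy transfer} along the chain --- is performed. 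I would fix the rule so that energy cascades through the chain in a single prescribed direction, guaranteeing a unique output. The expansion $\Psi$ is defined by the reverse procedure: split each secondary particle via $\gamma,\mu$ and reinsert its halves as primaries, transferring energy back through the affected block so that consecutive primaries become exactly $\od$-related.

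The main obstacle is the well-definedness of $\Phi$ and $\Psi$, i.e. that the iterations terminate and land in the correct sets. I would prove this by induction, say on the number of particles (equivalently by processing the partition one particle at a time and maintaining the invariant that the already-processed portion is a valid partition in $\E$, resp. $\Odd$). The inductive step is a finite case analysis over the four cases of $\odp$ in \Def{def:relation2}, together with the cases of $\od$, in which the key inputs are the \Ti\ for $\ep$ and the exact matching of the energy-of-transfer terms $\ep(c,c')+\ep(c',c'')$ appearing in \eqref{ps}, \eqref{sp}, \eqref{ss}: these are calibrated precisely so that, when a secondary particle is created or destroyed, the relation its neighbors are required to satisfy is transformed into a relation they are already known to satisfy. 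The delicate subcases are those in which a local move changes the type (primary versus secondary) of a particle and thereby switches which of the four rules governs an adjacency; there one must check that the energy transferred is exactly the difference between the two relevant thresholds.

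Finally, I would verify that $\Phi$ and $\Psi$ are mutually inverse by tracking each elementary move: every merge performed by $\Phi$ is undone by a unique split of $\Psi$ acting on the same block, and the energy transferred in one direction is reversed by the transfer in the other. Combined with the invariance of $|\cdot|$ and $C(\cdot)$ established at the outset, this yields the stated bijection between $\{\la\in\Odd:(C(\la),|\la|)=(C,n)\}$ and $\{\nu\in\E:(C(\nu),|\nu|)=(C,n)\}$, and restricting to the potential-bounded subsets produces the two displayed identities for $\rho\in\{0,1\}$.
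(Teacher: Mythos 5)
Your high-level architecture (a merge map $\Phi$, a split map $\Psi$, preservation of energy and state at the level of the local move, mutual inverseness) agrees with the paper's, and your observation that merging a troublesome pair preserves $|\cdot|$ and $C(\cdot)$ is correct. However, there are two genuine gaps. First, you invoke the \Ti\ for $\ep$ as a key input, but \Thm{theo:degree2} is stated for an arbitrary minimal energy, not a transitive one; the non-transitive energy of \Expl{ex:twister} is one of the intended applications (it appears in the corollary). The paper never uses the triangle inequality: it replaces it by the \emph{formal} energy of transfer $\Delta(k,k')=\sum_{u=k}^{k'-1}\ep(c_u,c_{u+1})$ along the actual sequence of states of the partition, which satisfies \Crr exactly (it telescopes), and every gain or loss of potential is measured against $\Delta$, not against $\ep$ of the endpoints. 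An argument resting on $\ep(c,c'')\leq\ep(c,c')+\ep(c',c'')$ would not cover the stated generality.

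Second, your description of $\Phi$ as a redistribution of potential confined to ``maximal runs of consecutive troublesome pairs'' does not produce an element of $\E$. After the troublesome pairs are summed into secondary particles, those secondary particles are generally \emph{not} well-related by $\odp$ to primary particles far outside their chain, and they must migrate through the partition by repeated crossings with unrelated primaries (in the paper's worked example, the secondary particle $10_{a^2}$ created from $5_a,5_a$ must cross the primary $5_b$, which was never part of its run, and $3_{ab}$ must cross $4_{\overline a}$). The real content of the proof is the control of these global migrations: one must show the crossing process terminates, that the final configuration is independent of the order of crossings, that the result satisfies all four clauses of \Def{def:relation2} (including the primary--primary and secondary--secondary adjacencies created between particles that were never adjacent in $\la$), and that $\Psi$'s stopping rule recovers exactly the pairs that $\Phi$ merged. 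The paper does this by tracking the position permutation $\sigma$ and characterizing the final positions by the sign of an explicit function $\phi(j,i)$ (\Prp{prop:finalposphi}); your induction ``one particle at a time'' does not explain how the inductive invariant survives the cascade of crossings that inserting one particle can trigger through the entire processed prefix, nor why $\Psi$ splits exactly the right particles after they have moved. These points need to be supplied before the bijection is established.
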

One can observe that, for any integer $n$ and any state $C$ with at least two primary states, the sets
$\{\la \in \Odd: (C(\la),|\la|)=(C,n)\}$ and  $\{\nu \in \E: (C(\nu),|\nu|)=(C,n)\}$ are infinite. However, as soon as we give an upper or a lower bound on the particles' potentials, the corresponding subsets are finite. 
\begin{ex}Let us consider $\C'=\{a<b\}$ in \Expl{ex:over} and the corresponding minimal energy. We then have for $n=10$ and $C=\overline{b}\overline{a}ba$ the relation  $\{\la \in \Odd^{1_{-}}: (C(\la),|\la|)=(\overline{b}\overline{a}ba,10)\}=\{\la \in \E^{1_{-}}: (C(\la),|\la|)=(\overline{b}\overline{a}ba,10)\}=\emptyset$, and the partitions in  $\Odd^{\rho_{+}}$ and $\E^{\rho_{+}}$ with the corresponding energy and state are given in the following table:
\[\begin{array}{|c|c|c|c|}
\hline
\Odd^{0_+}&\Odd^{1_+}&\E^{0_+}&\E^{1_+}\\
\hline
(9_{\overline{b}},1_{\overline{a}},0_b,0_a)&&(9_{\overline{b}},1_{\overline{a}},0_{ba})&\\
(8_{\overline{b}},2_{\overline{a}},0_b,0_a)&&(8_{\overline{b}},2_{\overline{a}},0_{ba})&\\
(7_{\overline{b}},3_{\overline{a}},0_b,0_a)&&(7_{\overline{b}},3_{\overline{a}},0_{ba})&\\
(7_{\overline{b}},2_{\overline{a}},1_b,0_a)&&(7_{\overline{b}},3_{\overline{a}b},0_a)&\\
(6_{\overline{b}},4_{\overline{a}},0_b,0_a)&&(6_{\overline{b}},4_{\overline{a}},0_{ba})&\\
(6_{\overline{b}},3_{\overline{a}},1_b,0_a)&&(6_{\overline{b}},3_{\overline{a}},1_b,0_a)&\\
(6_{\overline{b}},2_{\overline{a}},1_b,1_a)&(6_{\overline{b}},2_{\overline{a}},1_b,1_a)&(6_{\overline{b}},3_{\overline{a}b},1_a)&(6_{\overline{b}},3_{\overline{a}b},1_a)\\
(5_{\overline{b}},4_{\overline{a}},1_b,0_a)&&(9_{\overline{b}\overline{a}},1_b,0_a)&\\
(5_{\overline{b}},3_{\overline{a}},2_b,0_a)&&(7_{\overline{b}\overline{a}},3_b,0_a)&\\
(5_{\overline{b}},3_{\overline{a}},1_b,1_a)&(5_{\overline{b}},3_{\overline{a}},1_b,1_a)&(5_{\overline{b}},3_{\overline{a}},2_{ba})&(5_{\overline{b}},3_{\overline{a}},2_{ba})\\
(4_{\overline{b}},3_{\overline{a}},2_b,1_a)&(4_{\overline{b}},3_{\overline{a}},2_b,1_a)&(7_{\overline{b}\overline{a}},2_b,1_a)&(7_{\overline{b}\overline{a}},2_b,1_a)\\
\hline
\end{array} \,\cdot
\]
We have for $n=-8$ and $C=\overline{b}\overline{a}ba$ the relation $\{\la \in \Odd^{0_{+}}: (C(\la),|\la|)=(\overline{b}\overline{a}ba,-8)\}=\{\la \in \E^{0_{+}}: (C(\la),|\la|)=(\overline{b}\overline{a}ba,-8)\}=\emptyset$ and the partitions  in  $\Odd^{\rho_{-}}$ and $\E^{\rho_{-}}$ with the corresponding energy and state are given in the following table:
\[\begin{array}{|c|c|c|c|}
\hline
\Odd^{1_-}&\Odd^{0_-}&\E^{1_-}&\E^{0_-}\\
\hline
(1_{\overline{b}},0_{\overline{a}},-1_b,-8_a)&&(1_{\overline{b}},-1_{\overline{a}b},-8_a)&\\
(1_{\overline{b}},0_{\overline{a}},-2_b,-7_a)&&(1_{\overline{b}\overline{a}},-2_b,-7_a)&\\
(1_{\overline{b}},0_{\overline{a}},-3_b,-6_a)&&(1_{\overline{b}\overline{a}},-3_b,-6_a)&\\
(1_{\overline{b}},-1_{\overline{a}},-2_b,-6_a)&&(1_{\overline{b}},-3_{\overline{a}b},-8_a)&\\
(1_{\overline{b}},0_{\overline{a}},-4_b,-5_a)&&(1_{\overline{b}\overline{a}},-4_b,-5_a)&\\
(1_{\overline{b}},-1_{\overline{a}},-3_b,-5_a)&&(1_{\overline{b}},-1_{\overline{a}},-3_b,-5_a)&\\
(0_{\overline{b}},-1_{\overline{a}},-2_b,-5_a)&(0_{\overline{b}},-1_{\overline{a}},-2_b,-5_a)&(0_{\overline{b}}
,-3_{\overline{a}b},-5_a)&(0_{\overline{b}}
,-3_{\overline{a}b},-5_a)\\
(1_{\overline{b}},-1_{\overline{a}},-4_b,-4_a)&&(1_{\overline{b}},-1_{\overline{a}},-8_{ba})&\\
(1_{\overline{b}},-2_{\overline{a}},-3_b,-4_a)&&(1_{\overline{b}},-3_{\overline{a}},-6_{ba})&\\
(0_{\overline{b}},-1_{\overline{a}},-3_b,-4_a)&(0_{\overline{b}},-1_{\overline{a}},-3_b,-4_a)&
(-1_{\overline{b}},-3_{\overline{a}},-4_{ba})&(-1_{\overline{b}},-3_{\overline{a}},-4_{ba})\\
(0_{\overline{b}},-2_{\overline{a}},-3_b,-3_a)&(0_{\overline{b}},-2_{\overline{a}},-3_b,-3_a)&
(0_{\overline{b}},-2_{\overline{a}},-6_{ba})&(0_{\overline{b}},-2_{\overline{a}},-6_{ba})\\
\hline
\end{array} 
\]
\end{ex}
\bi  
We obtain the following corollary of \Thm{theo:degree2}.
\begin{cor}
For any set $\C$ of primary states and any minimal energy $\ep$ on $\C^2$, we have 
\begin{equation}
\sum_{\substack{n\geq 0\\
C \in <\C>}}|\{\nu \in \E^{\rho_+}: (C(\nu),|\nu|)=(C,n)\}|\underline{C}q^n = 
\sum_{\substack{n\geq 0\\
C \in <\C>}}|\{\la \in \Odd^{\rho_+}: (C(\la),|\la|)=(C,n)\}|\underline{C}q^n = \prod_{m\geq \rho} F_{\C}(\ep;q^m)
\end{equation}
where $<\C>$ is the non-commutative monoid generated by the primary states of $\C$, and $F_{\C}(\ep,x)$, in the commutative algebra $\Z[[\C,x]]$, is the generating function of all the partitions in $\Odd$ with particles' potential equal to $1$, and $\underline{C}$ is the commutative product corresponding to $C$ in $\Z[[\C,x]]$.  In particular, we have the following explicit expressions for $F_{\C}(\ep,x)$: 
\begin{enumerate}
\item For $\C=\{c_1,\ldots,c_n\}$, we have 
\begin{equation}
\begin{array}{|c|c|}
\hline
\ep(c_i,c_j)& F_{\C}(\ep,x)\\
\hline
\hline
0& \displaystyle\frac{1}{1-(c_1+\cdots+c_n)x} \\
\hline
1& \displaystyle 1+(c_1+\cdots+c_n)x \\
\hline
\chi(i\neq j)& \displaystyle 1+ \sum_{i=1}^n \frac{c_ix}{1-c_ix} \\
\hline
\chi(i<j)& \displaystyle \prod_{i=1}^n \frac{1}{1-c_ix}\\
\hline 
\chi(i\leq j)& \displaystyle \prod_{i=1}^n (1+c_ix)\\
\hline
\end{array}
\end{equation}
\item For $\C'=\{c_1,\ldots,c_n\}$ and $\ep$ as described in \Expl{ex:over}, 
\begin{equation}
F_{\C}(\ep,x) = \prod_{i=1}^n \frac{1+\cc_ix}{1-c_ix}\,\cdot
\end{equation}
\item For $\C=\{a,b\}$ and $\ep$ as described in \Expl{ex:twister}, 
\begin{equation}
F_{\C}(\ep,x) = \frac{(1+ax)(1+bx)}{(1-abx^2)}\,\cdot
\end{equation}
\end{enumerate}
\end{cor}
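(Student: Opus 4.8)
The plan is to treat the two equalities separately. The first one, between the generating functions of $\E^{\rho_+}$ and $\Odd^{\rho_+}$, is immediate: \Thm{theo:degree2} gives, for each pair $(C,n)$, a bijection between the finite sets $\{\la\in\Odd:(C(\la),|\la|)=(C,n)\}$ and $\{\nu\in\E:(C(\nu),|\nu|)=(C,n)\}$, and the displayed identities in that theorem show it restricts to $\Odd^{\rho_+}$ and $\E^{\rho_+}$. Hence the two multivariate series agree coefficient by coefficient, and it remains only to evaluate the $\Odd^{\rho_+}$ side.

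For the product formula I would decompose a partition of $\Odd^{\rho_+}$ by potential level. Since $\ep$ is nonnegative, $(k,c)\od(k',c')$ forces $k\geq k'$, so in any $\la=(\pi_1,\ldots,\pi_s)\in\Odd^{\rho_+}$ the potentials are weakly decreasing, and the particles of a fixed potential $m\geq\rho$ form a contiguous block. I claim $\la\mapsto(w_m)_{m\geq\rho}$, where $w_m$ is the state word of the block at height $m$, is a bijection onto tuples of level words, a level word being a (possibly empty) finite sequence $(c_1,\ldots,c_t)$ with $\ep(c_i,c_{i+1})=0$ for all $i$. Indeed, two consecutive particles inside a block have equal potential, so $\pi\od\pi'$ reads $0\geq\ep(c,c')$, i.e. $\ep(c,c')=0$; two consecutive particles straddling a level drop satisfy the relation automatically, since the potential gap is at least $1$ while $\ep(c,c')\leq 1$. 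Conversely, concatenating any family of level words in order of decreasing height produces a valid element of $\Odd^{\rho_+}$.

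Next I would identify each factor. A level word at height $m$ is exactly the state sequence of a partition of $\Odd$ all of whose particles have potential $1$ (replace each $c_i$ by $1_{c_i}$, the condition $1-1\geq\ep(c_i,c_{i+1})$ being the same as $\ep(c_i,c_{i+1})=0$). Tracking colors by $\underline{C}$ and the number of particles by the formal variable, these words are enumerated by $F_{\C}(\ep,x)$; since each of the $t$ particles in a height-$m$ block carries energy $m$, the same words weighted by actual energy contribute $F_{\C}(\ep,q^m)$. Because the block decomposition is a bijection and both the energy and the (commutative image of the) color of $\la$ are sums, respectively products, over the blocks, multiplying the per-level generating functions over all $m\geq\rho$ yields $\prod_{m\geq\rho}F_{\C}(\ep,q^m)$.

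Finally, the explicit evaluations reduce to counting the words $(c_1,\ldots,c_t)$ with $\ep(c_i,c_{i+1})=0$ throughout. Thus $\ep\equiv 0$ imposes no constraint and gives $1/(1-(c_1+\cdots+c_n)x)$; $\ep\equiv 1$ forbids any two consecutive particles, leaving words of length at most $1$; $\ep(c_i,c_j)=\chi(i\neq j)$ forces each block to be a single repeated color; while $\chi(i<j)$ and $\chi(i\leq j)$ force weakly, respectively strictly, decreasing indices, giving the two product formulas. The energy of \Expl{ex:over} factors a level word into a weakly decreasing unbarred part followed by a strictly increasing barred part, whence $\prod_i(1+\cc_ix)/(1-c_ix)$, and the nontransitive energy of \Expl{ex:twister} forces strictly alternating words, whose generating function is $(1+ax)(1+bx)/(1-abx^2)$. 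I expect no serious obstacle; the one point requiring care is verifying that inter-block transitions impose no constraint, i.e. that a potential gap of at least $1$ always dominates $\ep(c,c')\leq 1$, which is precisely what makes the level-by-level factorization exact.
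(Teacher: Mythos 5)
Your proposal is correct, and it is essentially the intended argument: the paper states this corollary without proof, but the very definition of $F_{\C}(\ep,x)$ as the generating function of potential-one partitions, together with the product over $m\geq\rho$, points exactly to your level-by-level decomposition. The two steps you single out as needing care — that within a block the relation $\od$ degenerates to $\ep(c,c')=0$, and that across a potential drop of at least $1$ the relation is automatic because $\ep\leq 1$ — are precisely the right checks, and your evaluations of the six explicit cases (including the alternating words for the non-transitive energy of \Expl{ex:twister}) all come out as stated.
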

\bi 
\begin{rem}
In \Thm{theo:degree2}, by setting $\C = \{a_1<\cdots<a_t\}$ and $\ep(a_i,a_j) = \chi(i\leq j)$, and $\rho_+=1_+$ (equivalent to the last case of (1) in the latter corollary), we recover the main generalization of Siladi\'c's theorem given in \cite{IK19}.
\end{rem}
The remainder of the paper will focus on the bijective proof of \Thm{theo:degree2}.
\section{Bijective maps for \Thm{theo:degree2}}\label{sect:maps}
In this section, we define an operator on the pairs of particles of different degree (primary and secondary), presented as an energy transfer, and a bijection for the proof of \Thm{theo:degree2} which uses this operator.
\subsection{Energy transfer}
\begin{deff}
We define a mapping $\Lambda$ on $\Pp\times \Sc \sqcup \Sc\times \Pp$ by the following relations:
\begin{equation}\label{eq:lmbps}
\begin{array}{r c l}
 \Pp\times \Sc &\longrightarrow&\Sc\times \Pp\\
 (k,\textcolor{red}{c}),(k',\textcolor{foge}{c'},\textcolor{blue}{c''}) &\longmapsto&(k'+\ep(\textcolor{foge}{c'},\textcolor{blue}{c''}),\textcolor{red}{c},\textcolor{foge}{c'}),(k-\ep(\textcolor{red}{c},\textcolor{foge}{c'})-\ep(\textcolor{foge}{c'},\textcolor{blue}{c''}),\textcolor{blue}{c''})\end{array}\,,
\end{equation}
\begin{equation}\label{eq:lmbsp}
\begin{array}{rcl}
\Sc\times \Pp&\longrightarrow&\Pp\times \Sc\\
 (k,\textcolor{red}{c},\textcolor{foge}{c'}),(k',\textcolor{blue}{c''})& \longmapsto &(k'+\ep(\textcolor{red}{c},\textcolor{foge}{c'})+\ep(\textcolor{foge}{c'},\textcolor{blue}{c''}),\textcolor{red}{c}),(k-\ep(\textcolor{foge}{c'},\textcolor{blue}{c''}),\textcolor{foge}{c'},\textcolor{blue}{c''})
\end{array}\,\cdot
\end{equation}
\end{deff}
What does $\Lambda$ do to the particles? Let us consider the following diagrams according to the occurrences of primary states:
\begin{center}
\begin{tikzpicture}
\draw (-3,0) node {$ \Pp\times \Sc \longrightarrow\Sc\times \Pp :$};
\draw (0,0) node {$\textcolor{red}{c}$};
\draw (2,0) node {$\textcolor{foge}{c'}$};
\draw (4,0) node {$\textcolor{blue}{c''}$};
\foreach \x in {0,1}
\draw [thick, ->] (3.8-2*\x,0)--(2.15-2*\x,0);
\draw [thick, ->] (0.1,0.2) arc (120:60:3.8);
\draw (3,-0.3) node {{\footnotesize $+\ep(\textcolor{foge}{c'},\textcolor{blue}{c''})$}};
\draw (1,-0.3) node {{\footnotesize $+\ep(\textcolor{red}{c},\textcolor{foge}{c'})$}};
\draw (2,0.9) node {{\footnotesize $-\ep(\textcolor{red}{c},\textcolor{foge}{c'})-\ep(\textcolor{foge}{c'},\textcolor{blue}{c''})$}};

\draw (-3,-2) node {$ \Sc\times \Pp \longrightarrow\Pp\times \Sc :$};
\draw (0,-2) node {$\textcolor{red}{c}$};
\draw (2,-2) node {$\textcolor{foge}{c'}$};
\draw (4,-2) node {$\textcolor{blue}{c''}$};
\foreach \x in {0,1}
\draw [thick, <-] (3.8-2*\x,-2)--(2.15-2*\x,-2);
\draw [thick, <-] (0.1,-1.8) arc (120:60:3.8);
\draw (3,-2.3) node {{\footnotesize $-\ep(\textcolor{foge}{c'},\textcolor{blue}{c''})$}};
\draw (1,-2.3) node {{\footnotesize $-\ep(\textcolor{red}{c},\textcolor{foge}{c'})$}};
\draw (2,-1.1) node {{\footnotesize $+\ep(\textcolor{red}{c},\textcolor{foge}{c'})+\ep(\textcolor{foge}{c'},\textcolor{blue}{c''})$}};
\end{tikzpicture}
\end{center}
These diagrams sum up the transfer of energies that occurs during the application of $\Ll$. For example, one can understand the process on the first diagram as follows: 
\begin{enumerate}
 \item The lower half $(k',\textcolor{blue}{c''})$ moves from state $\textcolor{blue}{c''}$ to $\textcolor{foge}{c'}$ and gains the minimal energy $\ep(\textcolor{foge}{c'},\textcolor{blue}{c''})$:
 \[
 \begin{array}{rlc}
  \textcolor{foge}{c'}&\longleftarrow&\textcolor{blue}{c''}\\
  k'+\ep(\textcolor{foge}{c'},\textcolor{blue}{c''})&\longleftarrow&k'
 \end{array}\,\cdot
 \]
 \item The upper half $(k'+\ep(\textcolor{foge}{c'},\textcolor{blue}{c''}),\textcolor{foge}{c'})$ moves from state $\textcolor{foge}{c'}$ to $\textcolor{red}{c}$ and gains the minimal energy $\ep(\textcolor{red}{c},\textcolor{foge}{c'})$:
 \[
 \begin{array}{rcl}
  \textcolor{red}{c}&\longleftarrow&\textcolor{foge}{c'}\\
  k'+\ep(\textcolor{red}{c},\textcolor{foge}{c'})+\ep(\textcolor{foge}{c'},\textcolor{blue}{c''})&\longleftarrow& k'+\ep(\textcolor{foge}{c'},\textcolor{blue}{c''})
 \end{array}\,\cdot
 \]
 \item The primary particle $(k,\textcolor{red}{c})$ moves from state $\textcolor{red}{c}$ to state $\textcolor{blue}{c''}$, through state $\textcolor{foge}{c'}$, and loses the energy of transfer $\ep(\textcolor{red}{c},\textcolor{foge}{c'})+\ep(\textcolor{foge}{c'},\textcolor{blue}{c''})$:
 \[
 \begin{array}{rcccl}
  \textcolor{red}{c}&\longrightarrow&\textcolor{foge}{c'}&\longrightarrow&\textcolor{blue}{c''}\\
  k&\longrightarrow&k-\ep(\textcolor{red}{c},\textcolor{foge}{c'})&\longrightarrow&k-\ep(\textcolor{red}{c},\textcolor{foge}{c'})-\ep(\textcolor{foge}{c'},\textcolor{blue}{c''})
 \end{array}\,\cdot
 \]
\end{enumerate}
The second diagram follows exactly the same transfer of energies.
We can then see $\Lambda$ as a \textit{energy transfer} that conserves the sequence of states but switches particles with the minimal loss or gain of energies.
One can check that the operator $\Lambda$ is an involution, i.e. $\Lambda^2 = Id$.
\bi In the following, if we apply $\Lambda$ to a pair of particles $(x,y)$ in $\Pp\times \Sc \sqcup \Sc\times \Pp$, we say that we \textit{cross} the particles $x$ and $y$.
\begin{ex}
We take $\C'=\{a<b\}$ in \Expl{ex:over}. We then have $\Lambda(3_{ab},-10_{\overline{a}})=(-9_a,2_{b\overline{a}})$. The energy transfer that occurs can be summarized by the following diagram
\begin{center}
\begin{tikzpicture}
\draw (0,0) node {$2_a$};
\draw (0.5,0) node {$+$};
\draw (1,0) node {$1_b$}; 
\draw (2,0) node {$-10_{\overline{a}}$};

\draw (1,-1) node {$1_b$};
\draw (1.5,-1) node {$+$};
\draw (2,-1) node {$1_{\overline{a}}$}; 
\draw (0,-1) node {$-9_{a}$};

\draw [->] (0.2,-0.2)--(0.8,-0.8);
\draw (0.2,-0.5) node {{\tiny $-1$}};
\draw [->] (1.2,-0.2)--(1.8,-0.8);
\draw (1.2,-0.5) node {{\tiny $-0$}};

\draw [->]  (2.3,-0.2) arc (23:-157:1.2);
\draw (2.4,-1.5) node {{\tiny $+1$}};

\end{tikzpicture} 
\end{center}

\end{ex}
\bi The main proposition that follows from the definition of $\Lambda$ is the following.
\begin{prop}\label{prop:switch}
 For any $(p,s)\in \Pp\times \Sc$, let us set $(s',p') = \Lambda(p,s)$.  We then have the following: 
 \begin{equation}\label{eq:crossps}
  p\not\odp s \Longleftrightarrow s'\odp p'\,,
 \end{equation}
\begin{equation}\label{eq:crosspp}
 p\not\od \gamma(s) \Longleftrightarrow \mu(s')\odp p'\,\cdot 
\end{equation}
\end{prop}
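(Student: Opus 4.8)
The plan is to prove both equivalences by brute-force reduction to a single linear inequality in the potentials, exploiting the fact that every defining relation in \Def{def:relation2} is, after unwinding, an inequality between integers. First I would fix coordinates: write $p=(k,\textcolor{red}{c})$ and $s=(k',\textcolor{foge}{c'},\textcolor{blue}{c''})$, so that by \eqref{eq:lmbps} we have
\[
s'=(k'+\ep(\textcolor{foge}{c'},\textcolor{blue}{c''}),\textcolor{red}{c},\textcolor{foge}{c'})\quad\text{and}\quad p'=(k-\ep(\textcolor{red}{c},\textcolor{foge}{c'})-\ep(\textcolor{foge}{c'},\textcolor{blue}{c''}),\textcolor{blue}{c''})\,\cdot
\]
The entire proof then consists in expanding the relevant rules among \eqref{pp}--\eqref{sp} on these explicit coordinates and checking that the two sides of each equivalence collapse to the same inequality.

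For \eqref{eq:crossps}, I would expand $p\odp s$ using the primary--secondary rule \eqref{ps}; this reads $k-2k'\geq\ep(\textcolor{red}{c},\textcolor{foge}{c'})+2\ep(\textcolor{foge}{c'},\textcolor{blue}{c''})$, so its negation $p\not\odp s$ becomes the strict inequality $k-2k'<\ep(\textcolor{red}{c},\textcolor{foge}{c'})+2\ep(\textcolor{foge}{c'},\textcolor{blue}{c''})$. On the other side, $s'$ is secondary and $p'$ is primary, so I would apply the secondary--primary rule \eqref{sp} to the pair $(s',p')$; after substituting the coordinates above and cancelling the common $\ep$-terms, the strict inequality $s'\odp p'$ simplifies to exactly $k-2k'<\ep(\textcolor{red}{c},\textcolor{foge}{c'})+2\ep(\textcolor{foge}{c'},\textcolor{blue}{c''})$, establishing \eqref{eq:crossps}.

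For \eqref{eq:crosspp}, the conceptual heart of the statement is the observation that $\gamma(s)=(k'+\ep(\textcolor{foge}{c'},\textcolor{blue}{c''}),\textcolor{foge}{c'})$ and $\mu(s')=(k'+\ep(\textcolor{foge}{c'},\textcolor{blue}{c''}),\textcolor{foge}{c'})$ are literally the \emph{same} primary particle; this is the particle left fixed in the middle slot while $p$ and $p'$ swap around it, which is why $\Lambda$ conserves the color sequence. Granting this, I would expand $p\od\gamma(s)$ via \eqref{rel} into $k-k'\geq\ep(\textcolor{red}{c},\textcolor{foge}{c'})+\ep(\textcolor{foge}{c'},\textcolor{blue}{c''})$, so that $p\not\od\gamma(s)$ is its strict reverse; then expanding $\mu(s')\odp p'$ through the primary--primary rule \eqref{pp} and cancelling the common $\ep(\textcolor{foge}{c'},\textcolor{blue}{c''})$ term yields the identical strict inequality $k-k'<\ep(\textcolor{red}{c},\textcolor{foge}{c'})+\ep(\textcolor{foge}{c'},\textcolor{blue}{c''})$, giving \eqref{eq:crosspp}.

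The arithmetic is entirely elementary, so I do not expect a genuine obstacle; the one point that demands care, and the place where the claim would fail for a carelessly designed relation, is the bookkeeping of strict versus non-strict inequalities. The rules \eqref{ps} and \eqref{rel} defining the left-hand conditions use $\geq$, while the rules \eqref{pp} and \eqref{sp} governing the right-hand conditions use $>$; negating the former produces precisely the strict inequality demanded by the latter, which is exactly what makes the logical equivalences (rather than mere implications) hold. I would therefore present the calculation so as to make this matching of strictness visible, and double-check that no off-by-one error is introduced when the minimal-energy terms are cancelled.
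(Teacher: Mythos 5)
Your proposal is correct and follows essentially the same route as the paper: write $p$, $s$, $s'$, $p'$ in coordinates, note that $\mu(s')=\gamma(s)$, and check that each side of each equivalence unwinds to the same linear inequality in $k$ and $k'$, with the strict/non-strict bookkeeping between \eqref{ps}, \eqref{rel} and \eqref{sp}, \eqref{pp} doing the work. No meaningful difference from the paper's own proof.
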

\bi
The relation \eqref{eq:crossps} means that the operator $\Lambda$ allows us to order, in terms of $\odp$, two particles of different degree which are not well-related. This property stands as the key result that will allow us to construct the mapping $\Phi$ from $\Odd$ to $\E$. On the other hand, the relation \eqref{eq:crosspp}, more subtle to explain, will play a major role in the inverse $\Psi$ of $\Phi$. 
\begin{proof}[Proof of \Prp{prop:switch}]
Let us set $p = (k,c)$ and $s= (k',c',c'')$. We then obtain $s' = (k'+\ep(c',c''),c,c')$ and $p' = (k-\ep(c,c')-\ep(c',c''),c'')$. We also observe that $\mu(s')=\gamma(s)$. We then have the following equivalences:
 \begin{align*}
  p\not\odp s &\Longleftrightarrow k-(2k'+\ep(c',c''))<\ep(c,c')+\ep(c',c'') &\text{by \eqref{ps}}\\
  &\Longleftrightarrow [2(k'+\ep(c',c''))+\ep(c,c')]-(k-\ep(c,c')-\ep(c',c''))>\ep(c,c')+\ep(c',c'')\,,\\
  &\Longleftrightarrow s'\odp p' &\text{by \eqref{sp}}\,\cdot \\\\
  p\not\od \gamma(s) &\Longleftrightarrow k-(k'+\ep(c',c''))<\ep(c,c')& \text{by \eqref{rel}}\\
  &\Longleftrightarrow k-k'\leq 1+\ep(c,c')+\ep(c',c'')\,,\\
  &\Longleftrightarrow (k'+\ep(c',c''))-(k-\ep(c,c')-\ep(c',c''))\geq 1+\ep(c',c'')\\
  &\Longleftrightarrow \mu(s')\odp p'& \text{by \eqref{pp}}\,\cdot
  \end{align*}
\end{proof}
\subsection{From $\Odd$ to $\E$}\label{sect:oe}
We now present the map $\Phi$ from $\Odd$ to $\E$. 
\m
Let us take any $\la\in \Odd$.
We set $\la = (\la_1,\ldots,\la_s)$ with $\la_{k}\od \la_{k+1}$ for any $k\in \ssss$. We illustrate this map by an example with $\C'=\{a<b\}$ and $\ep$ as described in \Expl{ex:over}:
 \[\la = (11_{\overline{b}}, 5_b,5_a,5_a,4_{\overline{a}},2_a,1_b,1_{\overline{a}},0_a,0_{\overline{b}},-1_b,-2_b)\,\cdot\]
\begin{itemize}
 \item[\Soo:]
First identify the consecutive disjoint troublesome pairs of particles ($\la_k,\la_{k+1}$ such that $\la_k\not\odp\la_{k+1}$), by beginning by those with the \textbf{smallest} potentials (from the right to the left). 

 Then, sum up these troublesome pairs $(\la_k,\la_{k+1})$ to have the secondary particles corresponding to $\la_k+\la_{k+1}$, without changing the order of the particles. We then obtain a new sequence of particles (where  particles are not necessarily well-related in terms of $\odp$) 
 $\la' = (\la'_1,\ldots,\la'_t)$, with particles $\la'_k$ in $\Odd$ and $\E$.  
In our example, we have the troublesome pairs
 \[\la = (11_{\overline{b}}, 5_b,\underbrace{5_a,5_a},4_{\overline{a}},\underbrace{2_a,1_b},\underbrace{1_{\overline{a}},0_a},\underbrace{0_{\overline{b}},-1_b},-2_b)\]
 and we obtain 
  \[\la' = (11_{\overline{b}}, 5_b,\underbrace{10_{a^2}},4_{\overline{a}},\underbrace{3_{ab}},\underbrace{1_{\overline{a}a}},\underbrace{-1_{\overline{b}b}},-2_b)\,\cdot\]
 \item[\Stt:] As long as there is a pair $(\la'_k,\la'_{k+1})\in (\Pp\times \Sc) \sqcup (\Sc\times \Pp)$ such that $\la'_k\not \odp \la'_{k+1}$, cross the particles in the pair with the operator $\Lambda$:
 \[(\la'_k,\la'_{k+1}) \longrightarrow \Lambda(\la'_k,\la'_{k+1})\,\cdot\]
 The order in which we perform the crossings is not specified here. Let us then apply this process in our example according to whether we choose the particles with the greatest or the smallest potentials for each application of $\Lambda$. We then have the following diagrams:
 \begin{center}
 \begin{tikzpicture}[scale=0.9, every node/.style={scale=0.9}]
 \draw (0,0)--(0,-5.3);
 
 \draw (-4.5,0) node {\textbf{choice of the greastest potentials}};
 
 \draw (-8,-1) node {$11_{\overline{b}}$};
 \draw (-7,-1) node {$5_b$};
 \draw (-6,-1) node {$10_{a^2}$}; 
 \draw (-5,-1) node {$4_{\overline{a}}$};
 \draw (-4,-1) node {$3_{ab}$};
 \draw (-3,-1) node {$1_{\overline{a}a}$};
 \draw (-2,-1) node {$-1_{\overline{b}b}$};
 \draw (-1,-1) node {$-2_b$};
 \draw [->] (-6.8,-1.2)--(-6.2,-1.8);
 \draw [->] (-6.2,-1.2)--(-6.8,-1.8);
  \draw (-8,-2) node {$11_{\overline{b}}$};
 \draw (-7,-2) node {$10_{ba}$};
 \draw (-6,-2) node {$5_{a}$};
 \draw (-5,-2) node {$4_{\overline{a}}$};
 \draw (-4,-2) node {$3_{ab}$};
 \draw (-3,-2) node {$1_{\overline{a}a}$};
 \draw (-2,-2) node {$-1_{\overline{b}b}$};
 \draw (-1,-2) node {$-2_b$};
 \draw [->] (-4.8,-2.2)--(-4.2,-2.8);
 \draw [->] (-4.2,-2.2)--(-4.8,-2.8);
 \draw (-8,-3) node {$11_{\overline{b}}$};
 \draw (-7,-3) node {$10_{ba}$};
 \draw (-6,-3) node {$5_{a}$};
 \draw (-5,-3) node {$5_{\overline{a}a}$};
 \draw (-4,-3) node {$2_{b}$};
 \draw (-3,-3) node {$1_{\overline{a}a}$};
 \draw (-2,-3) node {$-1_{\overline{b}b}$};
 \draw (-1,-3) node {$-2_b$};
 \draw [->] (-5.8,-3.2)--(-5.2,-3.8);
 \draw [->] (-5.2,-3.2)--(-5.8,-3.8);
 \draw (-8,-4) node {$11_{\overline{b}}$};
 \draw (-7,-4) node {$10_{ba}$};
 \draw (-6,-4) node {$6_{a\overline{a}}$};
 \draw (-5,-4) node {$4_{a}$};
 \draw (-4,-4) node {$2_{b}$};
 \draw (-3,-4) node {$1_{\overline{a}a}$};
 \draw (-2,-4) node {$-1_{\overline{b}b}$};
 \draw (-1,-4) node {$-2_b$};
 \draw [->] (-1.8,-4.2)--(-1.2,-4.8);
 \draw [->] (-1.2,-4.2)--(-1.8,-4.8);
 \draw (-8,-5) node {$11_{\overline{b}}$};
 \draw (-7,-5) node {$10_{ba}$};
 \draw (-6,-5) node {$6_{a\overline{a}}$};
 \draw (-5,-5) node {$4_{a}$};
 \draw (-4,-5) node {$2_{b}$};
 \draw (-3,-5) node {$1_{\overline{a}a}$};
 \draw (-2,-5) node {$-1_{\overline{b}}$};
 \draw (-1,-5) node {$-2_{b^2}$};
 
 \draw (4.5,0) node {\textbf{choice of the smallest potentials}};
 
 \draw (1,-1) node {$11_{\overline{b}}$};
 \draw (2,-1) node {$5_b$};
 \draw (3,-1) node {$10_{a^2}$};
 \draw (4,-1) node {$4_{\overline{a}}$};
 \draw (5,-1) node {$3_{ab}$};
 \draw (6,-1) node {$1_{\overline{a}a}$};
 \draw (7,-1) node {$-1_{\overline{b}b}$};
 \draw (8,-1) node {$-2_b$};
 \draw [->] (7.2,-1.2)--(7.8,-1.8);
 \draw [->] (7.8,-1.2)--(7.2,-1.8);
 \draw (1,-2) node {$11_{\overline{b}}$};
 \draw (2,-2) node {$5_b$};
 \draw (3,-2) node {$10_{a^2}$};
 \draw (4,-2) node {$4_{\overline{a}}$};
 \draw (5,-2) node {$3_{ab}$};
 \draw (6,-2) node {$1_{\overline{a}a}$};
 \draw (7,-2) node {$-1_{\overline{b}}$};
 \draw (8,-2) node {$-2_{b^2}$};
 \draw [->] (4.2,-2.2)--(4.8,-2.8);
 \draw [->] (4.8,-2.2)--(4.2,-2.8);
 \draw (1,-3) node {$11_{\overline{b}}$};
 \draw (2,-3) node {$5_b$};
 \draw (3,-3) node {$10_{a^2}$};
 \draw (4,-3) node {$5_{\overline{a}a}$};
 \draw (5,-3) node {$2_{b}$};
 \draw (6,-3) node {$1_{\overline{a}a}$};
 \draw (7,-3) node {$-1_{\overline{b}}$};
 \draw (8,-3) node {$-2_{b^2}$};
 \draw [->] (2.2,-3.2)--(2.8,-3.8);
 \draw [->] (2.8,-3.2)--(2.2,-3.8);
 \draw (1,-4) node {$11_{\overline{b}}$};
 \draw (2,-4) node {$10_{ba}$};
 \draw (3,-4) node {$5_{a}$};
 \draw (4,-4) node {$5_{\overline{a}a}$};
 \draw (5,-4) node {$2_{b}$};
 \draw (6,-4) node {$1_{\overline{a}a}$};
 \draw (7,-4) node {$-1_{\overline{b}}$};
 \draw (8,-4) node {$-2_{b^2}$};
 \draw [->] (3.2,-4.2)--(3.8,-4.8);
 \draw [->] (3.8,-4.2)--(3.2,-4.8);
 \draw (1,-5) node {$11_{\overline{b}}$};
 \draw (2,-5) node {$10_{ba}$};
 \draw (3,-5) node {$6_{a\overline{a}}$};
 \draw (4,-5) node {$4_{a}$};
 \draw (5,-5) node {$2_{b}$};
 \draw (6,-5) node {$1_{\overline{a}a}$};
 \draw (7,-5) node {$-1_{\overline{b}}$};
 \draw (8,-5) node {$-2_{b^2}$};
 
 \end{tikzpicture}
 \end{center}
 One can observe in this example that the final result is the same for both choices. This is indeed the case in general, whatever the choice of the applications of $\Lambda$. 
\end{itemize}
We claim that \St always ends, and that the final result $\la''$ is unique and belongs to $\E$ (two consecutive particles are always well-related by $\odp$).
 We then set $\Phi(\la)$ to be the final partition $\la''$ obtained at the end of \Stt. In our example we have
 \[\Phi(11_{\overline{b}}, 5_b,5_a,5_a,4_{\overline{a}},2_a,1_b,1_{\overline{a}},0_a,0_{\overline{b}},-1_b,-2_b)=(11_{\overline{b}},10_{ba},6_{a\overline{a}},4_{a},2_b,1_{\overline{a}a},-1_{\overline{b}},-2_{b^2})\,\cdot\]
\subsection{From $\E$ to $\Odd$}\label{sect:eo}
Here we present the map $\Psi$ inverse of $\Phi$. Let us take any $\nu = (\nu_1,\ldots,\nu_t)\in \E$. We illustrate $\Psi$ by the example $\nu=(11_{\overline{b}},10_{ba},6_{a\overline{a}},4_{a},2_b,1_{\overline{a}a},-1_{\overline{b}},-2_{b^2})$, the final result obtained before for the map $\Phi$.
\begin{itemize}
 \item[\Soo:]
As long as there is a pair $(\nu_k,\nu_{k+1})\in \Pp\times \Sc$ such that $\nu_{k}\not \od \gamma(\nu_{k+1})$ or 
  $(\nu_k,\nu_{k+1})\in \Sc\times \Pp$ such that $\mu(\nu_k)\not \odp \nu_{k+1}$, cross the particles in the pair with $\Lambda$:
 \[(\nu_k,\nu_{k+1}) \longrightarrow \Lambda(\nu_k,\nu_{k+1})\,\cdot\]
 Here again, the order in which the applications of $\Lambda$ occur is not specified. We proceed, as before, according to whether we choose the smallest or the greatest potentials.
 \begin{center}
 \begin{tikzpicture}[scale=0.9, every node/.style={scale=0.9}]
 \draw (0,0)--(0,-5.3);
 
 \draw (-4.5,0) node {\textbf{choice of the smallest potentials}};
 
 \draw (-8,-5) node {$11_{\overline{b}}$};
 \draw (-7,-5) node {$5_b$};
 \draw (-6,-5) node {$10_{a^2}$}; 
 \draw (-5,-5) node {$4_{\overline{a}}$};
 \draw (-4,-5) node {$3_{ab}$};
 \draw (-3,-5) node {$1_{\overline{a}a}$};
 \draw (-2,-5) node {$-1_{\overline{b}b}$};
 \draw (-1,-5) node {$-2_b$};
 \draw [->] (-6.8,-4.2)--(-6.2,-4.8);
 \draw [->] (-6.2,-4.2)--(-6.8,-4.8);
 \draw (-8,-4) node {$11_{\overline{b}}$};
 \draw (-7,-4) node {$10_{ba}$};
 \draw (-6,-4) node {$5_{a}$};
 \draw (-5,-4) node {$4_{\overline{a}}$};
 \draw (-4,-4) node {$3_{ab}$};
 \draw (-3,-4) node {$1_{\overline{a}a}$};
 \draw (-2,-4) node {$-1_{\overline{b}b}$};
 \draw (-1,-4) node {$-2_b$};
 \draw [->] (-4.8,-3.2)--(-4.2,-3.8);
 \draw [->] (-4.2,-3.2)--(-4.8,-3.8);
 \draw (-8,-3) node {$11_{\overline{b}}$};
 \draw (-7,-3) node {$10_{ba}$};
 \draw (-6,-3) node {$5_{a}$};
 \draw (-5,-3) node {$5_{\overline{a}a}$};
 \draw (-4,-3) node {$2_{b}$};
 \draw (-3,-3) node {$1_{\overline{a}a}$};
 \draw (-2,-3) node {$-1_{\overline{b}b}$};
 \draw (-1,-3) node {$-2_b$};
 \draw [->] (-5.8,-2.2)--(-5.2,-2.8);
 \draw [->] (-5.2,-2.2)--(-5.8,-2.8);
 \draw (-8,-2) node {$11_{\overline{b}}$};
 \draw (-7,-2) node {$10_{ba}$};
 \draw (-6,-2) node {$6_{a\overline{a}}$};
 \draw (-5,-2) node {$4_{a}$};
 \draw (-4,-2) node {$2_{b}$};
 \draw (-3,-2) node {$1_{\overline{a}a}$};
 \draw (-2,-2) node {$-1_{\overline{b}b}$};
 \draw (-1,-2) node {$-2_b$};
 \draw [->] (-1.8,-1.2)--(-1.2,-1.8);
 \draw [->] (-1.2,-1.2)--(-1.8,-1.8);
 \draw (-8,-1) node {$11_{\overline{b}}$};
 \draw (-7,-1) node {$10_{ba}$};
 \draw (-6,-1) node {$6_{a\overline{a}}$};
 \draw (-5,-1) node {$4_{a}$};
 \draw (-4,-1) node {$2_{b}$};
 \draw (-3,-1) node {$1_{\overline{a}a}$};
 \draw (-2,-1) node {$-1_{\overline{b}}$};
 \draw (-1,-1) node {$-2_{b^2}$};
 
 \draw (4.5,0) node {\textbf{choice of the greatest potentials}};
 
 \draw (1,-5) node {$11_{\overline{b}}$};
 \draw (2,-5) node {$5_b$};
 \draw (3,-5) node {$10_{a^2}$};
 \draw (4,-5) node {$4_{\overline{a}}$};
 \draw (5,-5) node {$3_{ab}$};
 \draw (6,-5) node {$1_{\overline{a}a}$};
 \draw (7,-5) node {$-1_{\overline{b}b}$};
 \draw (8,-5) node {$-2_b$};
 \draw [->] (7.2,-4.2)--(7.8,-4.8);
 \draw [->] (7.8,-4.2)--(7.2,-4.8);
 \draw (1,-4) node {$11_{\overline{b}}$};
 \draw (2,-4) node {$5_b$};
 \draw (3,-4) node {$10_{a^2}$};
 \draw (4,-4) node {$4_{\overline{a}}$};
 \draw (5,-4) node {$3_{ab}$};
 \draw (6,-4) node {$1_{\overline{a}a}$};
 \draw (7,-4) node {$-1_{\overline{b}}$};
 \draw (8,-4) node {$-2_{b^2}$};
 \draw [->] (4.2,-3.2)--(4.8,-3.8);
 \draw [->] (4.8,-3.2)--(4.2,-3.8);
 \draw (1,-3) node {$11_{\overline{b}}$};
 \draw (2,-3) node {$5_b$};
 \draw (3,-3) node {$10_{a^2}$};
 \draw (4,-3) node {$5_{\overline{a}a}$};
 \draw (5,-3) node {$2_{b}$};
 \draw (6,-3) node {$1_{\overline{a}a}$};
 \draw (7,-3) node {$-1_{\overline{b}}$};
 \draw (8,-3) node {$-2_{b^2}$};
 \draw [->] (2.2,-2.2)--(2.8,-2.8);
 \draw [->] (2.8,-2.2)--(2.2,-2.8);
 \draw (1,-2) node {$11_{\overline{b}}$};
 \draw (2,-2) node {$10_{ba}$};
 \draw (3,-2) node {$5_{a}$};
 \draw (4,-2) node {$5_{\overline{a}a}$};
 \draw (5,-2) node {$2_{b}$};
 \draw (6,-2) node {$1_{\overline{a}a}$};
 \draw (7,-2) node {$-1_{\overline{b}}$};
 \draw (8,-2) node {$-2_{b^2}$};
 \draw [->] (3.2,-1.2)--(3.8,-1.8);
 \draw [->] (3.8,-1.2)--(3.2,-1.8);
 \draw (1,-1) node {$11_{\overline{b}}$};
 \draw (2,-1) node {$10_{ba}$};
 \draw (3,-1) node {$6_{a\overline{a}}$};
 \draw (4,-1) node {$4_{a}$};
 \draw (5,-1) node {$2_{b}$};
 \draw (6,-1) node {$1_{\overline{a}a}$};
 \draw (7,-1) node {$-1_{\overline{b}}$};
 \draw (8,-1) node {$-2_{b^2}$};
 
 \end{tikzpicture}
 \end{center}
 We observe that the process by choosing the smallest potentials is the exact reverse process of \St of $\Phi$ by selecting the greatest potentials.  The same occurs between the choice of the greatest potentials, that gives the reverse process of \St of $\Phi$ by choosing the smallest potentials. We again have the same final result at the end of \So for both choices. 
 Let us set $\nu' = (\nu'_1,\ldots,\nu'_t)$ for the final sequence.
 \item[\textbf{Step 2}:] Split all the secondary particles $\nu'_k$ of $\nu'$ into their upper and lower halves:
 \[\nu'_k \longrightarrow \gamma(\nu'_k),\mu(\nu'_k)\,\cdot\]
 We then obtain $\nu''$. In our example, we have 
 \[\nu''= (11_{\overline{b}}, 5_b,5_a,5_a,4_{\overline{a}},2_a,1_b,1_{\overline{a}},0_a,0_{\overline{b}},-1_b,-2_b)\,\cdot\]
 \end{itemize}
 We claim that \So always ends in a unique result, whatever the choice of the applications of $\Lambda$, and that the final result $\nu''$ after \St belongs to $\Odd$  (the primary particles are well-related in terms of $\od$). We finally set $\Psi(\nu) = \nu''$ . In our example we have
 \[\Psi(11_{\overline{b}},10_{ba},6_{a\overline{a}},4_{a},2_b,1_{\overline{a}a},-1_{\overline{b}},-2_{b^2})=(11_{\overline{b}}, 5_b,5_a,5_a,4_{\overline{a}},2_a,1_b,1_{\overline{a}},0_a,0_{\overline{b}},-1_b,-2_b)\,\cdot\]
 \bi
\section{Proof of \Thm{theo:degree2}}\label{sect:proof}
In this section, we prove that the maps $\Phi$ and $\Psi$ given in \Sct{sect:maps} are well-defined and inverse to each other.
\subsection{Well-definedness of $\Phi$}\label{sect:wdphi}
Let us take any $\la = (\la_1,\ldots,\la_s) \in \Odd$, and set $\la_k = (l_k,c_k)\in \Pp$ for $k\in\sss$. Here we take the example from \Sct{sect:oe}, 
\[\la=(11_{\overline{b}}, 5_b,5_a,5_a,4_{\overline{a}},2_a,1_b,1_{\overline{a}},0_a,0_{\overline{b}},-1_b,-2_b)\,\cdot\] 
We then have $s=12$ and the following table:
\begin{equation}\label{eq:exlambda}
\begin{array}{|c|cccccccccccc|}
\hline
k&1&2&3&4&5&6&7&8&9&10&11&12\\
\hline
c_k&\overline{b}&b&a&a&\overline{a}&a&b&\overline{a}&a&\overline{b}&b&b\\
l_k&11&5&5&5&4&2&1&1&0&0&-1&-2\\
\hline
\end{array}\,\cdot
\end{equation}
In the following, we define in the first part some functions related to the partition $\la$, that will be useful for the second part, where we gives the argumentation for the proof of the well-definedness of $\Phi$.   
We explicitly compute all the functions defined in the following for our example.
\bi  
\subsubsection{The setup} We first define the function $\Delta$ on $\sss^2$ as follows,
\begin{equation}\label{del}
 \Delta : \,\,(k,k') \mapsto \left\lbrace \begin{array}{l c l}
                                     0& \text{if}& k=k'\\
                                     \displaystyle\sum_{u=k}^{k'-1} \ep(c_u, c_{u+1})& \text{if}& k<k'\\
                                     \displaystyle-\sum_{u=k'}^{k-1} \ep(c_u, c_{u+1})& \text{if}& k>k'\\
                                    \end{array}
                       \right. \,\cdot
\end{equation}
We remark that, for any $k\leq k'$,
\begin{equation}\label{signe}
 0\leq \Delta(k,k')\leq k'-k\quad, \quad\Delta(k,k')=-\Delta(k',k)\,\,,
\end{equation}
and for all $k\in \ssss$, we have by \eqref{rel} that
 \[l_k-l_{k+1}\geq \ep(c_k,c_{k+1})=\Delta(k,k+1)\,\cdot\]
Moreover, the function $\Delta$ satisfies \Crr: 
\[\Delta(k,k')+\Delta(k',k'') = \Delta(k,k'')\]
for all $k,k',k''\in\sss$.
We then identify $\Delta(k,k')$ as the \textit{formal} energy of transfer from the primary state $c_k$ to the primary state $c_k'$. Using \eqref{eq:exlambda}, we obtain the following table in our example
\begin{equation}\label{eq:exdelta}
\begin{array}{|c|ccccccccccc|}
\hline
k&1&2&3&4&5&6&7&8&9&10&11\\
\hline
\Delta(k,k+1)&1&0&0&0&1&1&0&1&0&1&0\\
\hline
\end{array}\,\cdot
\end{equation}
\bi
We now formalize the choice of troublesome pairs of primary particles in \Soo. In order to select the pairs with smallest potentials, from the right to the left, we proceed as follows:
 \begin{itemize}
  \item $i_1$ is the greatest $k\in \ssss$ such that $l_k-l_{k+1} = \Delta(k,k+1)$,
  \item if $i_{t-1}$ is selected, then, whenever it is still possible, $i_{t}$ is the greatest $k\in \{1,\ldots,i_{t-1}-2\}$ such that $l_k-l_{k+1} = \Delta(k,k+1)$.
 \end{itemize}
 We then set $I=\{i_t\}$  and  $J = \sss\setminus(I\sqcup (I+1))$. In our example, we have by \eqref{eq:exlambda} and \eqref{eq:exdelta} that
 \[i_1=10\,,\,i_2=8\,,\,i_3=6\,,\,i_4=3\,,\]
 and then 
 \[I=\{3,6,8,10\}\quad \text{and} \quad J = \{1,2,5,12\}\,\cdot\]
 \begin{rem}\label{rem:setij}
The sets $I,J$ are the unique sets $I',J'$ that satisfy the following relations:
 \begin{enumerate}
  \item $I',I'+1,J'$ form a set-partition of $\sss$,
  \item for all $i\in I'$, $l_i-l_{i+1}=\Delta(i,i+1)$,
  \item for all $j\in\Ssss\cap J'$, $l_{j-1}-l_j>\Delta(j-1,j)$.
 \end{enumerate}
 \end{rem}
By setting the function $\alpha$ on $\sss^2$ to be such that 
\begin{equation}
 \alpha : \,\,(k,k') \mapsto \left\lbrace \begin{array}{l c l}
                                     |(k,k']\cap J|& \text{if}& k\leq k'\\
                                     - \alpha(k',k)& \text{if}& k> k'
                                    \end{array}
                       \right. \,,
\end{equation}
we then have that $\alpha$ satisfies \Crr. One can also observe that $\alpha(k,k)=0$ for all $k\in\sss$. Therefore, using \Rem{rem:setij}, we obtain for all $k\leq k' \in \sss$ that 
\begin{equation}\label{ecart}
 l_{k}-l_{k'} \geq \alpha(k,k')+\Delta(k,k')\,\cdot
\end{equation}
We finally define the function $\beta$ on $\sss^2$ by
\begin{equation}\label{eq:defphi}
 \beta : \,\,(k,k') \mapsto \left\lbrace \begin{array}{l c l}
                                     |[k,k')\cap J|& \text{if}& k\leq k'\\
                                     - \beta(k',k)& \text{if}& k>k'
                                    \end{array}
                       \right. \,,
\end{equation}
and we have that $\beta$ satisfies \Crr. Our example gives the table 
\begin{equation}\label{eq:exab}
\begin{array}{|c|ccccccccccc|}
\hline
k&1&2&3&4&5&6&7&8&9&10&11\\
\hline
\alpha(k,k+1)&1&0&0&1&0&0&0&0&0&1&0\\
\hline
\beta(k,k+1)&1&1&0&0&1&0&0&0&0&0&0\\
\hline
\end{array}\,\cdot
\end{equation}
Using this table, \Cr then allows us to compute all the values for $\alpha$ and $\beta$. For example,
\[\alpha(2,4)=\alpha(2,3)+\alpha(3,4)= 0\quad \text{and}\quad \beta(4,2)=\beta(4,3)+\beta(3,2)=-0-1=-1\,\cdot\]
\bi 
To conclude, we observe that, at the end of \Soo, the particles in $\Sc$ are $\la_i+\la_{i+1}$ for $i\in I$. 
The set $I$ then corresponds to the index set of the upper halves, the set $I+1$ to the index set of the lower halves, and  $J$ represents the
 index set of the particles $\la_j$ that stay in $\Pp$. 
\subsubsection{Proof of the well-definedness of $\Phi$}
During \Stt, the positions of particles change by the actions of $\Lambda$. Here we see the secondary particles in $\Sc$ as the corresponding pair of two consecutive particles in $\Pp$. We can then consider the permutation $\sigma$ of $\sss$ which determines the new positions of these primary particles, and $\sigma $ satisfies the following properties:
\begin{itemize}
 \item $\sigma(i+1)=\sigma(i)+1$ for all $i\in I$, since we move the upper and lower halves together,
 \item $\sigma$ is increasing on $I$ and $J$, since $\Lambda$ never crosses the particles of the same degree.
\end{itemize}
\bi
We can now state the main results that will ensure the well-definedness of the map $\Phi$.
 \begin{prop}[Final positions]\label{prop:finalposphi}
 Let $\pt$ be the function on $J\times I$ defined by 
 \begin{equation}
  \pt : (j,i) \,\mapsto \, l_j-2l_{i+1}-\Delta(j,i+1)-\Delta(i+1-\beta(j,i),i+1)\,\cdot
 \end{equation}
Then the final position $\sigma$ after \St is such that for any $(j,i)\in J\times I$, 
\begin{equation}
 \sigma(j)<\sigma(i) \Longleftrightarrow \pt(j,i)\geq 0\,\cdot
\end{equation}
Futhermore, \St comes to an end after exactly
\begin{equation}\label{eq:nbcrossphi}
|\{(j,i)\in J\times I: j>i \text{ and }\pt(j,i)\geq 0\,, \text{ or } j<i \text{ and }\pt(j,i)<0\}|
\end{equation}
applications of $\Lambda$.
\end{prop}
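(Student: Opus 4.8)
The plan is to prove the two assertions together, deriving the count \eqref{eq:nbcrossphi} from the characterization of the final permutation $\sigma$. First I would record the consequences of the two structural properties of $\sigma$ already noted: since $\sigma(i+1)=\sigma(i)+1$ for $i\in I$ and $\sigma$ is increasing separately on $I$ and on $J$, the permutation is entirely determined by the \emph{interleaving pattern}, that is by the family of bits $\bigl(\,\sigma(j)<\sigma(i)\,\bigr)_{(j,i)\in J\times I}$. Each application of $\Ll$ acts on a single adjacent pair of different degrees and transposes it, and by the second property it never alters the relative order of two $J$-particles nor of two secondaries; hence one application flips exactly one of these bits. Consequently the number of applications of $\Ll$ during \St equals the number of bits in which the interleaving right after \Soo differs from the final one, \emph{provided no bit is flipped more than once}. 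Right after \Soo the particle coming from $j\in J$ lies to the left of the secondary coming from $i\in I$ precisely when $j<i$ (the indices are the initial positions, and $j\neq i,i+1$). Thus, granting the equivalence $\sigma(j)<\sigma(i)\Longleftrightarrow\pt(j,i)\ge 0$ together with the absence of double flips, the differing bits are exactly the pairs with $j<i,\ \pt(j,i)<0$ or $j>i,\ \pt(j,i)\ge0$, which is \eqref{eq:nbcrossphi}.

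The heart of the proof is a \emph{path-independent} reading of the comparison between a $J$-particle and a secondary. I would establish the following invariant, valid for every configuration reached during \St: whenever the particle coming from $j$ and the secondary coming from $i$ are adjacent, their defining $\odp$-inequality evaluates to $\pt(j,i)$; concretely, if the $J$-particle is the left neighbour then \eqref{ps} holds iff $\pt(j,i)\ge0$, and if it is the right neighbour then \eqref{sp} holds iff $\pt(j,i)<0$. The derivation is pure energy bookkeeping. Each crossing moves exactly one energy of transfer between the two particles (as displayed in the diagrams defining $\Ll$), while the sequence of states is conserved; writing the current potentials of the two particles in terms of the original $l_j,l_{i+1}$ and of the intermediate minimal energies, the accumulated transfers telescope through \Cr for $\Delta$ (its additivity along consecutive states). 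The two combinatorial corrections in $\pt$ are then identified: $\Delta(j,i+1)$ is the formal transfer energy between the states $c_j$ and $c_{i+1}$, while the shift $i+1-\beta(j,i)$ records that $\beta(j,i)=|[j,i)\cap J|$ standalone primaries have moved across the secondary and displaced its lower half, so that the genuine transfer energy at the moment of adjacency is $\Delta(i+1-\beta(j,i),i+1)$ rather than a fixed quantity. The inequality \eqref{ecart} guarantees these expressions remain consistent.

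Finally I would run a monovariant argument to obtain simultaneously the equivalence and the exact count. Declare the \emph{target} interleaving by $\sigma^\ast(j)<\sigma^\ast(i)\Longleftrightarrow\pt(j,i)\ge0$; monotonicity of $\pt(j,i)$ in $j$ and in $i$, again via \eqref{ecart}, shows that this prescription realizes a consistent linear interleaving. A configuration disagrees with the target in some bit iff it has an adjacent primary–secondary pair in the wrong relative order, and by the invariant such a pair is exactly one on which \eqref{ps} or \eqref{sp} fails, i.e. a pair to which $\Ll$ is applicable; moreover adjacent pairs of equal degree always remain well-related, so $\Ll$ is never applicable across them, a point verified separately using \eqref{ecart}. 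By \Prp{prop:switch} applying $\Ll$ to such a pair makes it well-related, hence puts that single bit into its target value while leaving all other bits unchanged; thus each application strictly decreases the number of disagreements with the target by one. Therefore \St stops precisely at the target configuration, after exactly the initial number of disagreements many applications, no bit is flipped twice, and the terminal permutation satisfies $\sigma(j)<\sigma(i)\Longleftrightarrow\pt(j,i)\ge0$. The delicate point throughout, and the main obstacle, is establishing the path-independent invariant of the second paragraph for \emph{all} intermediate configurations, since both potentials and the constituent states of the secondary change at every crossing; this forces the careful telescoping of transfer energies together with the precise interpretation of $\alpha$ and $\beta$ as counts of $J$-indices.
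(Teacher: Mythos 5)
Your proposal is correct and follows essentially the same route as the paper: your ``path-independent invariant'' is exactly the content of \Lem{lem:1} together with the computation identifying the adjacency test with $\pt(j,i)$ (via the position count that produces the $\beta$-shift), and your monotonicity claim for the target interleaving is \Lem{lem:2}. The only difference is presentational: you package the conclusion as a monovariant (each application of $\Ll$ fixes exactly one disagreeing bit, so the process terminates at the target after exactly the initial number of disagreements), whereas the paper argues by contradiction on the final position and then separately invokes the once-crossed-never-cross-back observation to get the count \eqref{eq:nbcrossphi}.
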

The above proposition ensures that the process \St always ends. Using \eqref{eq:exlambda}, \eqref{eq:exdelta} and \eqref{eq:exab}, we obtain in our example the following table corresponding to $\phi$: 
\[\begin{array}{|c|cccc|}
\hline
_{j}\setminus^{i}&3&6&8&10\\
\hline
1&0&4&5&6\\
2&-5&-1&1&2\\
5&-6&-1&0&1\\
12&-8&-2&-1&0\\
\hline
\end{array}\,\cdot\]
By the proposition, we have exactly four crossings which occur in the pairs $(j,i)$ in $\{(2,3),(2,6),(5,6),(12,10)\}$, and this corresponds to the illustration of \St in \Sct{sect:oe}.
\bi
The fact that the final partition belongs to the suitable set is given by the next two propositions.
\begin{prop}\label{prop:belE}
 The partition obtained after \St belongs to $\E$.
\end{prop}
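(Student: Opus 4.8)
The goal is to show that the sequence $\la''$ produced at the end of \St lies in $\E$, i.e.\ that every pair of consecutive particles is well-related by $\odp$. By definition, \St halts exactly when no consecutive pair of mixed degree (one primary and one secondary) is badly related, so at termination---which \Prp{prop:finalposphi} guarantees---every primary--secondary and secondary--primary adjacency already satisfies \eqref{ps} or \eqref{sp}. It therefore remains to treat the two same-degree cases: for consecutive primary particles one must verify \eqref{pp}, and for consecutive secondary particles one must verify \eqref{ss}. Since $\Lambda$ never crosses particles of equal degree, such adjacencies arise only as by-products of crossings, and a purely local invariant will not suffice: an isolated crossing can turn a well-related same-degree neighbour into a badly related one. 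These configurations never actually occur, but ruling them out uses the global difference conditions satisfied by $\la\in\Odd$, which are packaged in \eqref{ecart}.

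The plan is to read the final configuration off \Prp{prop:finalposphi} and check the two inequalities head-on. The permutation $\sigma$---increasing on $I$ and on $J$, with $\sigma(i+1)=\sigma(i)+1$ for $i\in I$---together with the sign of $\pt(j,i)$, pins down which final adjacencies are primary--primary and which are secondary--secondary. Because $\Lambda$ conserves the color word, the particle occupying the $q$-th slot of $\la''$ keeps the original state $c_q$; writing $L_q$ for its final potential and using that the two halves of a secondary differ in potential by the associated minimal energy, conditions \eqref{pp} and \eqref{ss} become the slot inequalities $L_q-L_{q+1}>\ep(c_q,c_{q+1})$ across a primary--primary boundary and $L_q-L_{q+1}\geq\ep(c_q,c_{q+1})$ across a secondary--secondary boundary, the latter gap separating the lower half $\mu$ of the left secondary from the upper half $\gamma$ of the right one.

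What is left is a closed form for the $L_q$. Each application of $\Lambda$ moves energy only by the minimal quantities $\ep(\cdot,\cdot)$ and conserves total energy, so the potential deposited in a given slot is the relevant original potential $l_k$ adjusted by an energy of transfer that accumulates additively along the moves prescribed by $\sigma$; these corrections are controlled by $\Delta$ and bookkept by $\alpha$ and $\beta$, just as in the formula defining $\pt$. Once this closed form is in hand, each same-degree inequality collapses to an instance of the global bound $l_k-l_{k'}\geq\alpha(k,k')+\Delta(k,k')$ of \eqref{ecart}, used alongside \Cr for $\Delta,\alpha,\beta$. The strictness required at a primary--primary boundary is supplied by \Rem{rem:setij}(3), which forces a strict inequality for consecutive indices of $J$, together with the fact that no crossing ever fuses two primaries into a troublesome pair.

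I expect the main obstacle to be this last step---computing the total energy transferred into each slot and matching the outcome to \eqref{ecart}. Determining that transfer needs the list of crossings each particle takes part in, and this is exactly where the order-independence of \St and the explicit positions of \Prp{prop:finalposphi} carry the load. The secondary--secondary inequalities are the delicate ones, because \eqref{ss} constrains the halves $\mu$ and $\gamma$ rather than the full potentials; there the identity $\mu(s')=\gamma(s)$ and relation \eqref{eq:crosspp} of \Prp{prop:switch}, which track how a half's potential changes across a crossing, are the natural tools and feed the closed-form computation. The primary--primary inequalities are comparatively routine, the lone subtlety being the strictness just mentioned.
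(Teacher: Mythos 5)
Your proposal is correct and follows essentially the same route as the paper: mixed-degree adjacencies are handled by the termination condition of \Stt, and the two same-degree cases are settled by the closed form of \Lem{lem:1} for the final potentials combined with the global bound \eqref{ecart} and \Crr, which supplies the strict inequality $\alpha(j,j')\geq 1$ for primary--primary pairs and reduces the secondary--secondary case to the condition $\mu\od\gamma$ required by \eqref{ss}. The only superfluous ingredient is your appeal to \eqref{eq:crosspp}, which the paper reserves for the inverse map $\Psi$; the secondary--secondary inequality falls out of the same \Lem{lem:1} computation without it.
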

\begin{prop}\label{prop:belErho}
 For any $\rho\in\{0,1\}$, we have $\Phi(\Odd^{\rho_\pm})\subset \E^{\rho_\pm}$.
\end{prop}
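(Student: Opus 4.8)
The plan is to derive \Prp{prop:belErho} from the explicit description of the final configuration used in the proof of \Prp{prop:finalposphi}, combined with the elementary inequalities \eqref{signe} and \eqref{ecart}. First I would reformulate membership in the bounded sets. Since $\ep\geq 0$, the lower half $\mu(k,c,c')=(k,c')$ is the smaller of the two halves of a secondary particle, so $(k,c,c')\in\Sc^{\rho_+}$ precisely when its lower half has potential $\geq\rho$, and dually $(k,c,c')\in\Sc^{\rho_-}$ precisely when its upper half $\gamma(k,c,c')$ has potential $\leq\rho$. Hence $\nu\in\E^{\rho_+}$ if and only if every primary particle of $\nu$ has potential $\geq\rho$ and every secondary particle of $\nu$ has lower half of potential $\geq\rho$; symmetrically for $\E^{\rho_-}$ with upper halves and the reversed inequality. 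I would carry out the $\rho_+$ case and obtain $\rho_-$ by the analogous computation (tracking upper halves, replacing $\ep$ by $\ep^*$, and reversing all inequalities).

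The core step is an exact formula for the final potentials. Throughout \Stt\ the operator $\Ll$ preserves the state sequence $c_1\cdots c_s$ and keeps the configuration a partition of $\{1,\dots,s\}$ into singletons (the primary particles) and adjacent pairs (the secondary ``windows''); a single crossing just slides one window past one singleton. I would track, along its whole trajectory, the state-position occupied by a given particle. By \eqref{eq:lmbps}--\eqref{eq:lmbsp}, when a primary sitting at state-position $u$ crosses the secondary occupying $\{u+1,u+2\}$ its potential changes by $-\bigl(\ep(c_u,c_{u+1})+\ep(c_{u+1},c_{u+2})\bigr)=-\Delta(u,u+2)$ if it moves to the right and by the opposite amount if it moves to the left; likewise the lower half of a secondary changes by $\mp\Delta$ as its window slides by one. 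These increments are consecutive values of $\Delta$, so along the trajectory they telescope (using that $\Delta$ satisfies \Crr). Consequently a final primary originating from $\la_j$ and ending at state-position $m$ has potential $l_j-\Delta(j,m)$, and a final secondary whose lower half ends at state-position $f$ has lower-half potential $l_{i+1}-\Delta(i+1,f)$, where $i$ indexes the troublesome pair it came from (so its lower half starts at state-position $i+1$). That these trajectories are finite is exactly the termination statement of \Prp{prop:finalposphi}.

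Granting these formulas, the bound follows by a sign dichotomy. For a final primary of potential $l_j-\Delta(j,m)$: if $m\geq j$ then \eqref{ecart} gives $l_j-\Delta(j,m)\geq l_m+\alpha(j,m)\geq l_m\geq\rho$, since $\alpha\geq 0$ and every part of $\la$ is $\geq\rho$; if $m<j$ then $\Delta(j,m)=-\Delta(m,j)\leq 0$ by \eqref{signe}, whence $l_j-\Delta(j,m)\geq l_j\geq\rho$. The identical argument applied to $l_{i+1}-\Delta(i+1,f)$ bounds every secondary lower half below by $\rho$. With the reformulation of the first paragraph this yields $\Phi(\Odd^{\rho_+})\subset\E^{\rho_+}$, and the dual computation gives $\Phi(\Odd^{\rho_-})\subset\E^{\rho_-}$.

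I expect the main obstacle to lie in establishing the potential formulas of the second paragraph, not in the final estimate. The bound is genuinely \emph{not} preserved by a single application of $\Ll$: one crossing can send a primary potential or a secondary lower half strictly below $\rho$, and only the net transfer over a complete trajectory, equal to a single increment $\Delta(\mathrm{start},\mathrm{end})$, is under control. Note also that the estimate crucially uses that \emph{all} parts $l_m$ of $\la$ are $\geq\rho$, not only the smallest, because a particle may drift to a distant state-position. Finally, some care is required to check that the singleton/window bookkeeping stays consistent at each step---that every crossing displaces a particle past exactly one window of length two---which is precisely the structure recorded in the proof of \Prp{prop:finalposphi}.
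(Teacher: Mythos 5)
Your proposal is correct and follows essentially the same route as the paper: reduce the claim to bounding the final potentials of all primary particles (the halves of secondary particles included), invoke the potential formula of \Lem{lem:1} (which you re-derive by telescoping $\Delta$ along trajectories), and conclude from \eqref{signe} and \eqref{ecart} together with the hypothesis that every part of $\la$ satisfies the bound. The only cosmetic difference is that the paper traps every final potential in the interval $[l_s,l_1]$ while you split on the direction of displacement and compare with $l_m$ or $l_j$; both arguments use the same ingredients.
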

\bi Before proving these propositions, we first state and show two lemmas that will be useful for the proofs of the propositions.
\begin{lem}\label{lem:1}
If a primary particle $(l_k,c_k)$ originally at position $k$ moves to position $\sigma(k)$, then it becomes \Par $(l_k+\Delta(\sigma(k),k),c_{\sigma(k)})$.
\end{lem}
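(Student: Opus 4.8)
The plan is to prove the claim by induction on the number of crossings (applications of $\Lambda$) already performed in \St, tracking the trajectory of each of the $s$ primary particles through its successive positions in the expanded sequence, where every secondary particle is read as its upper and lower halves. The structural fact underpinning the whole argument is that $\Lambda$ conserves the sequence of states: at every intermediate stage the expanded sequence, read from left to right, spells out $c_1,c_2,\ldots,c_s$. Hence a particle currently located at position $q$ necessarily carries the state $c_q$, and this invariant is exactly what allows the inductive hypothesis to propagate.

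I would formulate the inductive claim as follows: at any stage of \St, if the particle $\la_k=(l_k,c_k)$ initially at position $k$ now occupies position $q$, then it equals $(l_k+\Delta(q,k),c_q)$. The base case is immediate: before any crossing each $\la_k$ sits at position $q=k$, and $\Delta(k,k)=0$ gives potential $l_k$ and state $c_k$; note that \Soo only groups troublesome pairs into secondaries, so in the expanded view position $k$ still holds $\la_k$. For the inductive step I analyze one application of $\Lambda$ to a pair in $\Pp\times\Sc$ sitting at positions $q,q+1,q+2$, the $\Sc\times\Pp$ case being identical since $\Lambda$ is an involution. By the state invariant these positions carry states $c_q,c_{q+1},c_{q+2}$, so the energies occurring in \eqref{eq:lmbps} are $\ep(c_q,c_{q+1})=\Delta(q,q+1)$ and $\ep(c_{q+1},c_{q+2})=\Delta(q+1,q+2)$.

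Reading off \eqref{eq:lmbps}: the primary particle moves from $q$ to $q+2$ and its potential changes by $-\ep(c_q,c_{q+1})-\ep(c_{q+1},c_{q+2})=\Delta(q+2,q)$; the upper half moves from $q+1$ to $q$ with change $\ep(c_q,c_{q+1})=\Delta(q,q+1)$; and the lower half moves from $q+2$ to $q+1$ with change $\ep(c_{q+1},c_{q+2})=\Delta(q+1,q+2)$. In every case the potential increment is exactly $\Delta(q',q)$, where $q'$ is the new and $q$ the old position. Combining with the inductive hypothesis and Chasles' relation for $\Delta$, namely $\Delta(q',q)+\Delta(q,k)=\Delta(q',k)$, the updated potential is $l_{k}+\Delta(q',k)$, while the new state is $c_{q'}$ by the state invariant; this is precisely the asserted form. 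Setting $q=\sigma(k)$ at the termination of \St yields the lemma. The computations here are routine; the only delicate point, and the one I would state most carefully, is the bookkeeping that matches each of the three particles moved by \eqref{eq:lmbps} to its old and new positions and identifies its single-step increment with $\Delta(q',q)$ — after which Chasles' relation does the real work by telescoping these local increments into $\Delta(\sigma(k),k)$, independently of the unspecified order of crossings.
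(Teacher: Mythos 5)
Your proposal is correct and follows essentially the same route as the paper: induction on the number of applications of $\Lambda$, using the invariance of the state sequence to identify each local potential increment with a value of $\Delta$, and Chasles' relation to telescope these increments into $\Delta(\sigma(k),k)$. The paper presents the single-crossing computation as a table rather than spelling out the telescoping, but the argument is the same.
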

\begin{lem}\label{lem:2}
 The function $\phi$ is non-increasing in the first argument in $J$ and non-decreasing in the second argument in $I$.
\end{lem}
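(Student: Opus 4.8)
The plan is to establish both monotonicity statements by the same purely algebraic computation, turning each into an inequality among $\Delta$, $\alpha$ and $\beta$ that is settled by \Crr, the bounds \eqref{signe}, and the key inequality \eqref{ecart}. Throughout I will use that $\Delta$, $\alpha$ and $\beta$ each satisfy \Crr, so that differences of the form $\Delta(k,m)-\Delta(k',m)$ collapse to a single $\Delta(k,k')$, together with the fact that $0\leq \Delta(k,k')\leq k'-k$ whenever $k\leq k'$.

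For monotonicity in the first argument, I would fix $i\in I$ and $j<j'$ in $J$ and expand $\phi(j,i)-\phi(j',i)$. Writing $n=i+1-\beta(j,i)$ and $n'=i+1-\beta(j',i)$, \Cr gives $\Delta(j,i+1)-\Delta(j',i+1)=\Delta(j,j')$ and $\Delta(n,i+1)-\Delta(n',i+1)=\Delta(n,n')$, while $n'-n=\beta(j,i)-\beta(j',i)=\beta(j,j')\geq 0$. Hence $\phi(j,i)-\phi(j',i)=(l_j-l_{j'})-\Delta(j,j')-\Delta(n,n')$. Now \eqref{ecart} bounds $l_j-l_{j'}$ from below by $\alpha(j,j')+\Delta(j,j')$, and \eqref{signe} gives $\Delta(n,n')\leq n'-n=\beta(j,j')$, so the difference is at least $\alpha(j,j')-\beta(j,j')$. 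The computation closes on the observation that, since both $j$ and $j'$ lie in $J$, the sets $(j,j']\cap J$ and $[j,j')\cap J$ have equal cardinality, i.e. $\alpha(j,j')=\beta(j,j')$; thus $\phi(j,i)\geq\phi(j',i)$.

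Monotonicity in the second argument follows the same pattern but must absorb the factor $2$ in front of $l_{i+1}$. Fixing $j\in J$ and $i<i'$ in $I$, I would set $m=i+1-\beta(j,i)$ and $m'=i'+1-\beta(j,i')$; using $\beta(j,i')-\beta(j,i)=\beta(i,i')$ one gets $m'-m=(i'-i)-\beta(i,i')\geq 0$ since $\beta(i,i')=|[i,i')\cap J|\leq i'-i$. Expanding $\phi(j,i')-\phi(j,i)$ and applying \Cr yields $2(l_{i+1}-l_{i'+1})-\Delta(i+1,i'+1)-\Delta(m',i'+1)+\Delta(m,i+1)$, where the last two terms combine (again by \Cr) to $\Delta(m,m')-\Delta(i+1,i'+1)$. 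Bounding $l_{i+1}-l_{i'+1}$ from below by $\alpha(i+1,i'+1)+\Delta(i+1,i'+1)$ through \eqref{ecart}, the three copies of $\Delta(i+1,i'+1)$ cancel exactly (coefficients $2-1-1=0$), leaving $\phi(j,i')-\phi(j,i)\geq 2\alpha(i+1,i'+1)+\Delta(m,m')$, which is nonnegative by \eqref{signe} and $m\leq m'$.

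The main obstacle is the careful bookkeeping of the shifted lower endpoints $i+1-\beta(j,i)$ appearing in the last summand of $\phi$: one must verify that these endpoints stay correctly ordered ($n\leq n'$, respectively $m\leq m'$), so that the $\Delta$ produced after the Chasles rewriting is nonnegative and is bounded above by the relevant difference of $\beta$'s, which is exactly what lets it be absorbed against the $\alpha$ contribution coming from \eqref{ecart}. The small but decisive combinatorial identity $\alpha(j,j')=\beta(j,j')$ for $j,j'\in J$, together with the exact cancellation of the $\Delta(i+1,i'+1)$ terms, is what makes each inequality collapse to a manifestly nonnegative sum.
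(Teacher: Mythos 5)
Your proposal is correct and follows essentially the same route as the paper's proof: the same expansion of the differences via Chasles' relation, the same absorption of the $\Delta$ term between the shifted endpoints using \eqref{signe} and \eqref{ecart}, and the same decisive identity $\alpha(j,j')=\beta(j,j')$ for $j,j'\in J$ in the first part (resp.\ the cancellation of the $\Delta(i+1,i'+1)$ terms in the second).
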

\Lem{lem:1} plays a central role in the understanding of the operator $\Lambda$. Rephrased, it can be stated as follows: a primary particle that moves from a state  $c_k$ to a state $c_{k'}$ gains the \textbf{formal} energy of transfer from $c_k$ to $c_{k'}$. By \eqref{signe}, this energy is non-negative if $k\leq k'$, and non-positive if $k\geq k'$.
\begin{proof}[Proof of \Lem{lem:1}]
 We prove the lemma by induction on number of applications of $\Lambda$. The  energy transfer $\Ll$ conserves the state of the partition, so that the sequence of states is fixed. On the other hand, the particles gain or lose
 exactly the minimal energy needed for the transfer, and by definition, this is exactly what $\Delta$ keeps track of. As an example, if we do the transformation $\Lambda$, at position $k$, on a pair of particles in $ \Pp\times \Sc$, we obtain
\[\begin{array}{|c || c |c |c|}
\hline
   \text{initial positions}& j&i+1&i+2 \\
   \hline\hline
   \text{positions before }\Lambda& k&k+1&k+2 \\
   \hline
   \text{states before }\Lambda& c_k&c_{k+1}&c_{k+2} \\
   \hline
   \text{potentials before }\Lambda&l'_k&l'_{k+1}&l'_{k+2}\\
   \hline\hline
   \text{positions after }\Lambda& k+2&k&k+1 \\
   \hline
   \text{states after }\Lambda& c_{k+2}&c_{k}&c_{k+1} \\
   \hline
   \text{potentials after }\Lambda&\Delta(k+2,k)+l'_{k}&\Delta(k,k+1)+l'_{k+1}&\Delta(k+1,k+2)+l'_{k+2}\\
   \hline
  \end{array}\,\cdot
\]
Here we recall that $l'_{k+1}-l'_{k+2}=\Delta(k+1,k+2)$. The same calculation occurs when we consider the application of $\Lambda$ on a pair in $\Sc\times \Pp$.
 \end{proof}
\begin{proof}[Proof of \Lem{lem:2}]
We first prove that $\pt$ is non-increasing in the first argument, and then that 
$\pt$ is non-decreasing in the second argument.
\begin{itemize} 
 \item For any $j<j'\in J$ and $i\in I$, we have by \Cr and \eqref{ecart} that
 \begin{align*}
 \pt(j,i)-\pt(j',i) &= l_j-l_{j'} - \Delta(j,j')-\Delta(i+1-\beta(j,i),i+1-\beta(j',i))\\
 &\geq \alpha(j,j')-\Delta(i+1-\beta(j,i),i+1-\beta(j',i))\,\cdot
 \end{align*}
 But \Cr  and \eqref{signe} give that
 \[i+1-\beta(j',i)-(i+1-\beta(j,i)) = \beta(j,j')\geq 0\,,\]
 so that by \eqref{signe} again, we obtain that $\phi(j,i)-\phi(j',i)\geq \alpha(j,j')-\beta(j,j')$. 
 Since $j,j'\in J$, we have
 \[\alpha(j,j') = |(j,j']\cap J| = 1+|(j,j')\cap J|= |[j,j')\cap J|=\beta(j,j')\,\cdot\]
 Therefore, we always have for any $j<j'\in J$ and $i\in I$ that $\phi(j,i)-\phi(j',i)\geq 0$.\\
 \item For any $j\in J$ and $i<i'\in I$, we have by \Cr and \eqref{ecart}
 \begin{align*}
 \pt(j,i')-\pt(j,i)&= 2(l_{i+1}-l_{i'+1})-\Delta(i+1,i'+1)+\Delta(i+1-\beta(j,i),i+1)\\
 &\quad +\Delta(i'+1,i'+1-\beta(j,i'))\\
 &= 2(l_{i+1}-l_{i'+1}-\Delta(i+1,i'+1))\\
 &\quad +\Delta(i+1-\beta(j,i),i'+1-\beta(j,i'))\\
 &\geq 2\alpha(i+1,i'+1)+ \Delta(i+1-\beta(j,i),i'+1-\beta(j,i'))\,
 \end{align*}
 Since we have by \eqref{signe} that
 \begin{align*}
 i'+1-\beta(j,i')-(i+1-\beta(j,i))& = i'-i-\beta(i,i') \\
 &= |[i,i')\cap (I\sqcup (I+1))|\\&\geq 0\,\,,
 \end{align*}
 we then obtain that $\pt(j,i')-\pt(j,i)\geq 0$. 
\end{itemize}
\end{proof}
\bi
We can now prove \Prp{prop:finalposphi}, \Prp{prop:belE} and \Prp{prop:belErho}. 
\begin{proof}[Proof of \Prp{prop:finalposphi}]
Let $\sigma$ be the final position. 
\begin{itemize}
 \item Let us suppose that there exists $(j,i)\in J\times I$ such that $\sigma(j)<\sigma(i)$ and $\pt(j,i)<0$. By \Lem{lem:2} we have $\pt(j',i')<0$ for all $j<j'\in J,\,\,i'<i \in I$.
 Moreover, since $\sigma$ is increasing on $J$ and $I$, and $\sigma(J)+1\setminus \sigma(J) \subset \sigma(I)$, we necessarily have some $j<j'\in J,\,\,i'<i \in I$ such that $\sigma(j')+1 = \sigma(i')$.
 We then obtain by \Lem{lem:1} the following difference of potentials:
 \begin{align*}
 D &= \la'_{\sigma(j')}-(\la'_{\sigma(j')+1}+\la'_{\sigma(j')+2}) - \Delta(\sigma(j'),\sigma(j')+2)\\
 &=l_{j'}+\Delta(\sigma(j'),j') - [ 2 (l_{i'+1}+\Delta(\sigma(i'+1),i'+1))+\Delta(\sigma(i'),\sigma(i'+1))]\\
 &\quad-\Delta(\sigma(j'),\sigma(i'+1))\\
 &=l_{j'}-2l_{i'+1} - \Delta(j',i'+1) - \Delta(\sigma(i'),i'+1)\,\cdot
 \end{align*}
We now compute $\sigma(i')$. Since $\sigma$ is increasing on $I\sqcup(I+1)$ and on $J$, we have
\begin{align*}
\sigma(i')-1 &= \sigma(j')\\
&= |[1,j']\cap J|+ |[1,i')\cap (I\sqcup (I+1))|\\
&= 1+ \beta(j') + i'-1-\beta(i')\\
&=i'-\beta(j,i') \,\cdot
\end{align*}
Finally, we obtain by definition that $D = \pt(j',i')<0$. Since the potential difference is negative, by \eqref{ps}, we have $\la'_{\sigma(j')}\not \odp \la'_{\sigma(j')+1}+\la'_{P(j')+2}$ and $\sigma$ is no longer the final position.\\
\item
Let us now suppose that there exists $(j,i)\in J\times I$ such that $\sigma(j)>\sigma(i)$ and $\pt(j,i)\geq 0$. By \Lem{lem:2}, we have $\pt(j',i')\geq 0$ for all $j>j'\in J,\,\,i'>i \in I$.
Since $\sigma$ is increasing on $J$ and $I$, and $\sigma(J)-1\setminus \sigma(J) \subset \sigma(I)+1$, we necessarily have some $j>j'\in J,\,\,i'>i \in I$ such that $\sigma(j')-1 = \sigma(i')+1$.
We then obtain by \Lem{lem:1} the following difference of potentials:
 \begin{align*}
 D &= (\la'_{\sigma(j')-2}+\la'_{\sigma(j')-1})-\la'_{\sigma(j')}-\Delta(\sigma(j')-2,\sigma(j'))\\
 &=[ 2 (l_{i'+1}+\Delta(\sigma(i'+1),i'+1))+\Delta(\sigma(i'),\sigma(i'+1))]-l_{j'}-\Delta(\sigma(j'),j') \\
 &\quad -\Delta(\sigma(i'),\sigma(j'))\\
 &= 2l_{i'+1}-l_{j'} - \Delta(i'+1,j') - \Delta(i'+1,\sigma(i'+1))\,\cdot
 \end{align*} 
We now conpute $\sigma(i'+1)$ Since $\sigma$ is increasing on $I\sqcup(I+1)$ and on $J$,
\begin{align*}
\sigma(i'+1)+1 &= \sigma(j')\\
 &= |[1,j']\cap J|+ |[1,i'+1]\cap (I\sqcup (I+1))|\\
&= 1+|[1,j')\cap J|+ 2+|[1,i')\cap (I\sqcup (I+1))|\\
&= 2+ \beta(j') + i'-\beta(i')\\
&=2+i'-\beta(j,i') \,\cdot
\end{align*}
Finally, we obtain by definition that $D= -\pt(j',i')\leq 0$.
Since the potential difference is non-positive, by \eqref{sp}, we have $\la'_{\sigma(j')-2}+\la'_{\sigma(j')-1}\not \odp \la'_{\sigma(j')}$ and $\sigma$ is no longer the final position.
\end{itemize}
To conclude, for $\sigma$ being the last position, the first part of the reasoning gives $\sigma(j)<\sigma(i) \Longrightarrow \pt(j,i)\geq 0$ and the second part gives
$\sigma(j)<\sigma(i) \Longleftarrow \pt(j,i)\geq 0$, so that we obtain the equivalence 
\[\sigma(j)<\sigma(i) \Longleftrightarrow \pt(j,i)\geq 0\,\cdot\]
\bi One can see in the previous reasoning that for any $(j,i)\in J\times I$, whatever the choice of \Stt, 
once they meet for some position $\sigma'$ (particles have consecutive positions), we then have that the corresponding difference $D$ between the potential of the particle to the left and the potential of the particle to the right does not depend on $\sigma'$:
\begin{itemize}
\item if $\sigma'(j)+1=\sigma'(i)$, then $D = \pt(j,i)$,
\item if $\sigma'(j)-1=\sigma'(i+1)$, then $D = -\pt(j,i)$.
\end{itemize}
By \eqref{sp} and \eqref{ps}, this means that once the particles coming from $i$ and $j$ cross by $\Lambda$ in \Stt, they cannot cross back. Moreover, by the fact that the position function $\sigma'$ is increasing on $J$ and $I\sqcup (I+1)$, the crossings only occur, once, for 
$j<i$ such that $\pt(j,i)<0$ or $j>i$ such that $\pt(j,i)\leq 0$, and this gives \eqref{eq:nbcrossphi}.
\end{proof}
\begin{proof}[Proof of \Prp{prop:belE}]
 By \eqref{eq:crossps} of \Prp{prop:switch}, we obtain, by crossing two particles with different degrees  which are not well-related in terms of $\odp$, that the resulting particles become well-related in terms of $\odp$. \St then consists in ordering consecutive particles with different degrees, as the process stops as soon as this is the case. 
 \m Let us show that two consecutive primary particles are well related in terms of $\odp$. 
Since $\sigma$ is increasing on $J$, we then have, by \Crr, that for any $j<j'\in J$
 \[(l_j+\Delta(\sigma(j),j))-(l_{j'}+\Delta(\sigma(j'),j')) = l_j-l_{j'} -\Delta(j,j') + \Delta(\sigma(j),\sigma(j'))\,,\cdot\]
In particular, if $\sigma(j')=\sigma(j)+1$, we then obtain  by \eqref{ecart} and the defintion of $\alpha$ that
 \begin{align*}
 (l_j+\Delta(\sigma(j),j))-(l_{j'}+\Delta(\sigma(j'),j'))&\geq \alpha(j,j')+\Delta(\sigma(j),\sigma(j'))\\
  &= |(j,j']\cap J| + \ep(c_{\sigma(j)},c_{\sigma(j')})\\ 
  &\geq  1 + \ep(c_{\sigma(j)},c_{\sigma(j')})\,\cdot
 \end{align*}
This means, by \eqref{pp}, that two consecutive primary particles are always well-ordered in terms of $\odp$ in the final result. 
 \m Finally, with the same reasoning as before, since $\sigma$ is increasing on $I\sqcup (I+1)$, we have for $i<i'\in I$ such that $\sigma(i)+2=\sigma(i')$ that 
 \begin{align*}
 (l_{i+1}+\Delta(\sigma(i+1),i))-(l_{i'}+\Delta(\sigma(i'),i'))&\geq \alpha(i+1,i')+\Delta(\sigma(i+1),\sigma(i'))\\
  &= |(i+1,i']\cap J| + \ep(c_{\sigma(j)},c_{\sigma(j')})\\ 
  &\geq  \ep(c_{\sigma(j)},c_{\sigma(j')})\,,
 \end{align*}
 so that by \eqref{rel}, we have $\la'_{\sigma(i+1)}\od \la'_{\sigma(i')}$. We then obtain, by \eqref{ss}, that two consecutive secondary particles are always well-ordered in terms of $\odp$ in the final result.
\end{proof}
\begin{proof}[Proof of \Prp{prop:belErho}]
It suffices to show that all primary particles stay in the interval corresponding to $\rho_{\pm}$. By using \eqref{signe}, \eqref{ecart}, and \Lem{lem:1},  we obtain for any $k\in \sss$ that
 \[
  l_k + \Delta(\sigma(k),k)\leq l_1 - \alpha(1,k)-\Delta(1,\sigma(k)) \leq l_1
 \]
 and 
 \[l_k + \Delta(\sigma(k),k) \geq l_s + \alpha(k,s) + \Delta(\sigma(k),s)\geq l_s\,\cdot\]
Therefore, the potentials of the primary particles in the final partition stay in $[l_s,l_1]$. If $\la_k\in \Odd^{\rho_\pm}$ for all $k\in \sss$, then $\la'_{\sigma(k)}\in \Odd^{\rho_\pm}$ and then $\la'_{\sigma(j)}\in \Odd^{\rho_\pm}$ 
 and $\la'_{\sigma(i)}+\la'_{\sigma(i+1)}\in \E^{\rho_\pm}$ for all $(j,i)\in J\times I$.
 \end{proof}
 \bi
\subsection{Well-definedness of $\Psi$}
Let us consider $\nu\in \E$ with $\nu = (\nu_1,\ldots,\nu_t)$. We rename the indices by enumerating
all primary particles  that occur in $\nu$. This means that we count the secondary particles as a pair of consecutive primary particles. We take the example in \Sct{sect:eo}
\[\nu=(11_{\overline{b}},10_{ba},6_{a\overline{a}},4_{a},2_b,1_{\overline{a}a},-1_{\overline{b}},-2_{b^2})\,,\]
and the rewriting gives
\[\nu=(11_{\overline{b}}, \underbrace{5_b, 5_{a}},\underbrace{3_{a},3_{\overline{a}}},4_{a},2_b,\underbrace{1_{\overline{a}},0_{a}},-1_{\overline{b}},\underbrace{-1_b,-1_{b}})\,\cdot\]
As we did before for the process $\Phi$, we first give some functions related to $\nu$, and then prove the well-definedness of $\Psi$. We explicitly compute these functions for our example.
\subsubsection{The setup} 
We consider $\nu = (\nu'_1,\ldots,\nu'_s)$ written according to the primary particles that occur in $\nu$. There then exist unique sets $J,I$ such that $\sss = J\sqcup I\sqcup (I+1)$, where $J$ is the index set of the particles in $\Pp$, and $I$  and  $I+1$ are respectively the index sets 
of upper and lower halves of the particles in $\Sc$. We have In our example
\[I=\{2,4,8,11\}\quad \text{and} \quad J = \{1,6,7,10\}\,\cdot\]
We also set 
\[\nu'_k = (l_k,c_k) \text{ for all }k\in \sss\,, \]
and define the function $\Delta$ on $\sss^2$ in the same way we previously did in \eqref{del}. We finally set the function $\eta$ on $\sss^2$ to be as follows:
\begin{equation}
 \eta: \,\,(k,k') \mapsto \left\lbrace \begin{array}{l c l}
                                     |(k,k']\cap J|& \text{if}& k\leq k'\\
                                     - \eta(k',k)& \text{if}& k> k'\\
                                    \end{array}
                       \right. \,\cdot
\end{equation}
We notice that $\eta$ satisfies \Crr. In our example, we obtain the following table:
\begin{equation}\label{eq:exnu}
\begin{array}{|c|cccccccccccc|}
\hline
k&1&2&3&4&5&6&7&8&9&10&11&12\\
\hline
c_k&\overline{b}&b&a&a&\overline{a}&a&b&\overline{a}&a&\overline{b}&b&b\\
l_k&11&5&5&3&3&4&2&1&0&-1&-1&-1\\
\hline
\Delta(k,k+1)&1&0&0&0&1&1&0&1&0&1&0&\\
\eta(k,k+1)&0&0&0&0&1&1&0&0&1&0&0&\\
\hline
\end{array}\,\cdot
\end{equation}
\bi
 We now give in the following lemma the relations that link the particles' potentials.
\begin{lem}\label{lem:3} Let us set 
\[l'_k = \left\lbrace \begin{array}{l c l}
                        l_k &\text{if}& k\in J\\
                        2l_k&\text{if}&k\in I\sqcup (I+1)
                       \end{array}
              \right.\,\cdot
 \]
 Then for all $k\leq k'\in \sss$, we have 
 \begin{equation}\label{Ecart}
 l'_k-l'_{k'} \geq \eta(k,k')+\Delta(k,k')\,\cdot
 \end{equation}
In particular, for all $i\leq i' \in I\sqcup (I+1)$, we have 
\begin{equation}\label{Ecart1}
 l_i-l_{i'}\geq \Delta(i,i')\,\cdot
\end{equation}
\end{lem}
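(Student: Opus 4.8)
The plan is to establish the general inequality \eqref{Ecart} first and then to deduce \eqref{Ecart1} from it by a finer argument. Since both $\Delta$ and $\eta$ satisfy \Crr\ and the left-hand side telescopes, $l'_k-l'_{k'}=\sum_{u=k}^{k'-1}(l'_u-l'_{u+1})$ and $\eta(k,k')+\Delta(k,k')=\sum_{u=k}^{k'-1}\big(\eta(u,u+1)+\Delta(u,u+1)\big)$, so it suffices to prove \eqref{Ecart} for consecutive indices $k'=k+1$. I would then classify the pair $(k,k+1)$ by its membership in $J$, $I$, $I+1$. Because a position of $I$ is always immediately followed by its partner in $I+1$, the only configurations that can occur are $(J,J)$, $(J,I)$, $(I,I+1)$, $(I+1,J)$ and $(I+1,I)$, which correspond respectively to \eqref{pp}, \eqref{ps}, the internal upper/lower-half splitting, \eqref{sp} and \eqref{ss}. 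In each case one has $\eta(k,k+1)=\chi(k+1\in J)$ and $\Delta(k,k+1)=\ep(c_k,c_{k+1})$, and one substitutes $l'_k=l_k$ or $l'_k=2l_k$ accordingly; the required inequality then follows by reading off the defining relation of \Def{def:relation2} and using the identity $l_k=l_{k+1}+\ep(c_k,c_{k+1})$ that links the potentials of the upper and lower halves of a single secondary particle (this is what converts the $2k'+\ep$ term of \eqref{ps} and the $2k+\ep$ term of \eqref{sp} into the desired form).

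For \eqref{Ecart1} the subtle point is that it does \emph{not} follow from \eqref{Ecart} by dividing by two: on a long run of consecutive secondary halves one has $\eta=0$ while $\Delta$ can be as large as the length of the run, so the factor $2$ in $l'_i-l'_{i'}=2(l_i-l_{i'})$ loses too much. The remedy is to decompose the integer interval $[i,i']$ into two kinds of pieces. First, for two consecutive positions both lying in $I\sqcup(I+1)$ (the cases $(I,I+1)$ and $(I+1,I)$), the relations \eqref{rel} built into \eqref{ss} together with the half identity give the sharp bound $l_u-l_{u+1}\geq \ep(c_u,c_{u+1})=\Delta(u,u+1)$. Second, a maximal block of positions of $J$ inside $[i,i']$ is necessarily flanked on its left by a lower half $a\in I+1$ and on its right by an upper half $b\in I$: if the block starts at $p$ then $p-1\notin J$ cannot lie in $I$ (else $p\in I+1$), so $p-1\in I+1$, and symmetrically at the right end.

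For such a $J$-block I would apply the already-proven \eqref{Ecart} to the pair $(a,b)$, getting $2(l_a-l_b)\geq \eta(a,b)+\Delta(a,b)$. Here $\eta(a,b)$ counts exactly the $b-a-1$ primary positions strictly between the two halves, and $\Delta(a,b)\leq b-a$, so $\eta(a,b)=b-a-1\geq \Delta(a,b)-1$ and hence $2(l_a-l_b)\geq 2\Delta(a,b)-1$; since $l_a-l_b$ is an integer this forces $l_a-l_b\geq \Delta(a,b)$. It then remains to tile $[i,i']$ by the single steps between consecutive halves and by these $J$-blocks running from a lower half to the next upper half, and to add the resulting integer inequalities, all of the shape $l_{\mathrm{start}}-l_{\mathrm{end}}\geq \Delta(\mathrm{start},\mathrm{end})$, using \Crr\ for $\Delta$. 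This yields $l_i-l_{i'}\geq \Delta(i,i')$, which is \eqref{Ecart1}.

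The hard part will be precisely the integrality argument for the $J$-blocks: one must track that $\eta(a,b)$ equals the number $b-a-1$ of intermediate primary positions and that $\Delta(a,b)\leq b-a$, so that the half-unit deficit left by halving \eqref{Ecart} is exactly absorbed by the integrality of the potentials. By contrast the verification of \eqref{Ecart} itself, while requiring care to match each configuration to the correct clause of \Def{def:relation2} and to keep the bookkeeping of upper/lower-half potentials straight, is routine once the five cases are laid out.
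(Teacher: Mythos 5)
Your proposal is correct and follows essentially the same route as the paper: reduce \eqref{Ecart} to consecutive indices via \Crr and check the five admissible configurations against \Def{def:relation2}, then obtain \eqref{Ecart1} by passing between consecutive elements of $I\sqcup(I+1)$, using $\eta(a,b)=b-a-1\geq \Delta(a,b)-1$ and the integrality of the potentials to absorb the half-unit lost in halving \eqref{Ecart}. The only cosmetic difference is that you treat the adjacent $(I+1,I)$ step directly via \eqref{ss} rather than as the degenerate case of the $J$-block argument.
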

\begin{proof}
Since the functions $\eta$ and $\Delta$ satisfy \Cr, in order to show \eqref{Ecart}, it suffices to prove that for all $k\in \ssss$,
\[l'_k-l'_{k+1}\geq \beta(k,k+1)+\Delta(k,k+1)\,\cdot\]
\begin{itemize}
 \item If $k\in I$, then $k+1\in I+1$ and 
 \begin{align*}
 l'_k-l'_{k+1} &= 2\Delta(k,k+1)\\
 &\geq \Delta(k,k+1) \\
 &= \beta(k,k+1)+\Delta(k,k+1)\,\cdot
 \end{align*}
 \item If $k\in I+1$ and $k+1\in I$, then by \eqref{ss}, $(l_k,c_{k-1},c_k)\gg(l_{k+2},c_{k+1},c_{k+2})$ is equivalent to
 \begin{align*}
 l'_k-l'_{k+1} &\geq  2\Delta(k,k+1)\\
 &\geq \eta(k,k+1)+\Delta(k,k+1)\,\cdot
 \end{align*}
 \item If $k\in I+1$ and $k+1\in J$, then by \eqref{sp}, $(l_k,c_{k-1},c_k)\gg(l_{k+1},c_{k+1})$ is equivalent to
 \begin{align*} l'_k-l'_{k+1} &\geq  1+ \Delta(k,k+1)\\
 &=\eta(k,k+1)+\Delta(k,k+1)\,\cdot
 \end{align*}
 \item If $k\in J$ and $k+1\in I$, then by \eqref{ps}, $(l_k,c_k)\gg(l_{k+2},c_{k+1},c_{k+2}) $ is equivalent to
 \begin{align*}
 l'_k-l'_{k+1} &\geq  \Delta(k,k+1)\\
 &=\eta(k,k+1)+\Delta(k,k+1)\,\cdot
 \end{align*}
 \item If $k,k+1\in J$, then by \eqref{pp}, $(l_k,c_k)\gg(l_{k+1},c_{k+1})$ is equivalent to
 \begin{align*}
 l'_k-l'_{k+1} &\geq 1+\Delta(k,k+1)\\
 &=\eta(k,k+1)+\Delta(k,k+1)\,\cdot
 \end{align*}
\end{itemize}
To show \eqref{Ecart1}, we only need to prove the relation for two consecutive $i,i' \in I\sqcup I+1$. This is obvious for $i\in I$, since the following index is $i+1\in I+1$, and 
$l_i-l_{i+1}=\Delta(i,i+1)$. Now let us take $i\in I+1$. The next $i'$ (if it exists) must necessarily be in $I$, and by \eqref{Ecart}, we obtain by the definition of $\eta$ and 
\eqref{signe} that
\begin{align*}
 2(l_i-l_{i'})&= l'_i-l'_{i'} \\
 &\geq \eta(i,i')+\Delta(i,i')\\
 & = i'-i-1+\Delta(i,i')\\
 &\geq 2\Delta(i,i')-1\\
  \Longrightarrow\quad l_i-l_{i'} &\geq \Delta(i,i')- \frac{1}{2}\\
  \Longrightarrow \quad l_i-l_{i'}&\geq \Delta(i,i')\,\cdot
\end{align*}
\end{proof}
\bi
\subsubsection{Proof of the well-definedness of $\Psi$}
We can now focus on the position $\sigma$ of the particles during \So of $\Psi$.
Note that \Lem{lem:1} still holds here, as well as the fact that
$\sigma(i+1)=\sigma(i)+1$ for all $i\in I$ and $\sigma$ is increasing on $I\sqcup (I+1)$ and $J$.
\bi We now give the analogous results of \Prp{prop:finalposphi}, \Prp{prop:belE} and \Prp{prop:belOrho} that ensure the well-definedness of $\Psi$.
\begin{prop}[Final position]\label{prop:finalpospsi}
Let $\psi$ be the function on $J\times I$ defined by :
\begin{equation}
 \psi: (j,i)\longmapsto l_j-l_i - \Delta(j,i)\,\cdot
\end{equation}
Then, the final position $\sigma$ of $\Psi$ after \So is such that, for all $(j,i)\in J\times I$,
\begin{equation}
 \sigma(j)<\sigma(i) \Longleftrightarrow \psi(j,i)\geq 0\,,
\end{equation}
and \So comes to an end after exactly
\begin{equation}\label{eq:nbstp2}
|\{(j,i)\in J\times I: j>i \text{ and }\psi(j,i)\geq 0\,, \text{ or } j<i \text{ and }\psi(j,i)<0\}|
\end{equation}
applications of $\Lambda$.
\end{prop}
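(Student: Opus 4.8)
The plan is to transcribe the proof of \Prp{prop:finalposphi}, with $\pt$ replaced by $\psi$ and the bounds \eqref{ecart} replaced by those of \Lem{lem:3}. As there, \Lem{lem:1} stays valid, so the primary particle originally at position $k$ that has moved to position $\sigma(k)$ has potential $l_k+\Delta(\sigma(k),k)$ and state $c_{\sigma(k)}$; moreover $\sigma(i+1)=\sigma(i)+1$ for $i\in I$, and $\sigma$ is increasing on $J$ and on $I\sqcup(I+1)$. The first task is to check that $\psi$ is monotone in the same sense as $\pt$. For $j<j'\in J$, \Cr gives $\psi(j,i)-\psi(j',i)=(l_j-l_{j'})-\Delta(j,j')$, which by \eqref{Ecart} is at least $\eta(j,j')\geq 0$; and for $i<i'\in I$, one gets $\psi(j,i')-\psi(j,i)=(l_i-l_{i'})-\Delta(i,i')\geq 0$ by \eqref{Ecart1}. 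Hence $\psi$ is non-increasing in its first argument and non-decreasing in its second.

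The core of the proof is a local computation. Suppose a primary particle coming from $j\in J$ and a secondary particle coming from $i\in I$ have become adjacent. Using \Lem{lem:1}, the half-decomposition $l_i=l_{i+1}+\Delta(i,i+1)$, and Chasles' relation, I expect the governing comparison to reduce to $\psi(j,i)$ alone. Concretely, if the primary sits immediately to the left (so $\sigma(j)+1=\sigma(i)$, a pair in $\Pp\times\Sc$), then the potential of the primary minus that of $\gamma$ of the secondary equals $\psi(j,i)+\Delta(\sigma(j),\sigma(i))$, so by \eqref{rel} the crossing trigger $p\not\od\gamma(s)$ is equivalent to $\psi(j,i)<0$. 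If instead the secondary sits immediately to the left (so $\sigma(i+1)+1=\sigma(j)$, a pair in $\Sc\times\Pp$), then the potential of $\mu$ of the secondary minus that of the primary equals $-\psi(j,i)+\Delta(\sigma(i+1),\sigma(j))$, so by \eqref{pp} the trigger $\mu(s)\not\odp p$ is equivalent to $\psi(j,i)\geq 0$. In both cases the $\Delta$-terms attached to the ambient positions telescope away, so that whether a crossing fires depends only on the sign of $\psi(j,i)$.

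From here the argument is a line-by-line copy of \Prp{prop:finalposphi}. If $\sigma$ were final yet some $(j,i)\in J\times I$ had $\sigma(j)<\sigma(i)$ with $\psi(j,i)<0$, monotonicity would give $\psi(j',i')<0$ for all $j'>j$, $i'<i$; since $\sigma$ is increasing on $J$ and on $I\sqcup(I+1)$ and $\sigma(J)+1\setminus\sigma(J)\subset\sigma(I)$, some such pair satisfies $\sigma(j')+1=\sigma(i')$, and the local computation then permits a crossing, contradicting finality. The symmetric case $\sigma(j)>\sigma(i)$ with $\psi(j,i)\geq 0$ is handled the same way, using $\sigma(J)-1\setminus\sigma(J)\subset\sigma(I)+1$. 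This yields $\sigma(j)<\sigma(i)\Leftrightarrow\psi(j,i)\geq 0$. For the count, because the local crossing condition depends only on $(j,i)$, once the particles from $i$ and $j$ cross they never cross back; comparing the initial arrangement (where $\sigma=\mathrm{id}$, so $j$ precedes $i$ exactly when $j<i$) with the final one shows that precisely the pairs listed in \eqref{eq:nbstp2} are crossed, each once, which simultaneously proves that \So terminates and gives the stated number of applications of $\Lambda$.

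The step I expect to demand the most care is the local computation of the middle paragraph: tracking the formal transfer energies $\Delta$ through \Lem{lem:1} while substituting $l_i=l_{i+1}+\Delta(i,i+1)$, and checking that the accumulated $\Delta$-terms cancel exactly to leave $\pm\psi(j,i)$ with no residual dependence on the surrounding positions. Once this telescoping is confirmed, the monotonicity and the combinatorial adjacency arguments are faithful transcriptions of the corresponding passages in the proof of \Prp{prop:finalposphi}.
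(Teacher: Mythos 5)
Your proposal is correct and follows essentially the same route as the paper: establish the monotonicity of $\psi$ from Lemma \ref{lem:3} (via \eqref{Ecart} on $J$ and \eqref{Ecart1} on $I$), reduce each potential crossing to the local computation showing that the adjacency comparison telescopes to $\pm\psi(j,i)$ plus the ambient $\Delta$-term that cancels against the minimal energy in \eqref{rel} or \eqref{pp}, and then run the two contradiction arguments and the once-crossed-never-back counting exactly as in Proposition \ref{prop:finalposphi}. The local computations you flag as delicate match the paper's: $D=\psi(j',i')$ in the $\Pp\times\Sc$ case and $D=-\psi(j',i')$ in the $\Sc\times\Pp$ case after substituting $l_{i'}=l_{i'+1}+\Delta(i',i'+1)$.
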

\begin{prop}\label{prop:belO}
The resulting partition after \St belongs to $\Odd$.
\end{prop}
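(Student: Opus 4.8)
The plan is to treat \Prp{prop:belO} as the $\Psi$-counterpart of \Prp{prop:belE}, exploiting the symmetry between the two directions of the bijection. By \Prp{prop:finalpospsi} the process \So terminates in a well-defined sequence $\nu'$ of primary and secondary particles, described by the position function $\sigma$, which is increasing on $J$ and on $I\sqcup(I+1)$ and satisfies $\sigma(i+1)=\sigma(i)+1$ for $i\in I$. Then \St splits every secondary $\nu'_k$ into its halves $\gamma(\nu'_k)\od\mu(\nu'_k)$, producing $\nu''$, so it suffices to show that every pair of particles that becomes consecutive in $\nu''$ is well-related by $\od$. According to whether each of two adjacent slots is a lone primary (index in $J$), an upper half (index in $I$), or a lower half (index in $I+1$), and using that an upper half is always directly followed by its own lower half because $\Lambda$ transports the two halves together, exactly five types of consecutive pair occur: $(I,I+1)$, $(J,I)$, $(I+1,J)$, $(J,J)$ and $(I+1,I)$.

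Three of these need no computation. The pair $(I,I+1)$ consists of the two halves of one secondary particle, which are $\od$-related by the very definition of a secondary particle (\Def{def:secpar}). The types $(J,I)$ and $(I+1,J)$ are governed by the stopping rule of \Soo: the process \So halts exactly when no consecutive pair in $\Pp\times\Sc$ has $\nu'_k\not\od\gamma(\nu'_{k+1})$ and no pair in $\Sc\times\Pp$ has $\mu(\nu'_k)\not\odp\nu'_{k+1}$. Hence at a $(J,I)$ junction the lone primary $p$ and the upper half $\gamma(s)$ immediately created after it satisfy $p\od\gamma(s)$, and at an $(I+1,J)$ junction the lower half $\mu(s)$ and the following primary $p$ satisfy $\mu(s)\odp p$, hence $\mu(s)\od p$. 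In each case the splitting leaves an $\od$-related pair.

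The two remaining types are not monitored by \So and form the heart of the proof; for both I would reproduce the potential computation of \Prp{prop:belE}, now aiming only at the non-strict relation $\od$. For two lone primaries with indices $j<j'\in J$ and $\sigma(j')=\sigma(j)+1$, \Lem{lem:1} and \Cr give that the potential of the left particle minus that of the right equals $l_j-l_{j'}-\Delta(j,j')+\ep(c_{\sigma(j)},c_{\sigma(j')})$; since $l'_j=l_j$ and $l'_{j'}=l_{j'}$, \eqref{Ecart} bounds this below by $\eta(j,j')+\ep(c_{\sigma(j)},c_{\sigma(j')})\geq 1+\ep(c_{\sigma(j)},c_{\sigma(j')})$, so by \eqref{pp} the two primaries are even $\odp$-related, a fortiori $\od$-related. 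For a lower half at index $i+1\in I+1$ immediately followed by the upper half of the next secondary at index $i'\in I$, with $\sigma(i')=\sigma(i+1)+1$, the same manipulation yields a difference equal to $l_{i+1}-l_{i'}-\Delta(i+1,i')+\ep(c_{\sigma(i+1)},c_{\sigma(i')})$, and \eqref{Ecart1} of \Lem{lem:3} gives $l_{i+1}-l_{i'}\geq\Delta(i+1,i')$, whence the difference is at least $\ep(c_{\sigma(i+1)},c_{\sigma(i')})$ and the two halves are $\od$-related by \eqref{rel}.

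The step I expect to be the main obstacle is precisely this last, $(I+1,I)$, junction: because \So only ever crosses particles of different degrees, the stopping rule says nothing about two adjacent secondaries, so one cannot read off $\mu(s_1)\od\gamma(s_2)$ (equivalently $s_1\odp s_2$ via \eqref{ss}) from termination and must instead draw it from the structural inequality \eqref{Ecart1}, which is exactly where the hypothesis $\nu\in\E$ enters. A secondary point requiring care is the exhaustiveness of the five-type classification, in particular that an upper half is always immediately followed by its own lower half, which is what rules out the spurious adjacency types. Once the five cases are settled, every consecutive pair of $\nu''$ is $\od$-related, and therefore $\nu''\in\Odd$, as claimed.
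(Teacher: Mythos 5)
Your proof is correct and follows essentially the same route as the paper: the same five-case classification of adjacent index pairs, with $(J,I)$ and $(I+1,J)$ handled by the stopping rule of Step 1, $(I,I+1)$ by the definition of a secondary particle, $(J,J)$ by \eqref{Ecart} via \Lem{lem:1}, and the crucial $(I+1,I)$ junction by \eqref{Ecart1}. Your explicit remarks on the exhaustiveness of the classification and on why the $(I+1,I)$ case cannot be read off from termination are accurate but only make explicit what the paper leaves implicit.
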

\begin{prop}\label{prop:belOrho}
 For any $\rho\in\{0,1\}$, we have $\Psi(\E^{\rho_\pm})\subset \Odd^{\rho_\pm}$.
\end{prop}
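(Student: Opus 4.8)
The plan is to follow the proof of \Prp{prop:belErho} as closely as possible, with the clean estimate \eqref{ecart} replaced by its $\Psi$-counterpart \eqref{Ecart} from \Lem{lem:3}, and to reduce the statement to controlling the two extreme particles of $\nu'':=\Psi(\nu)$. First I would note that membership of $\nu$ in $\E^{\rho_+}$ (resp. $\E^{\rho_-}$) forces every potential $l_k$ of the rewriting $\nu=(\nu'_1,\ldots,\nu'_s)$ to satisfy $l_k\geq\rho$ (resp. $l_k\leq\rho$): for $k\in J$ this is the defining condition of $\Pp^{\rho_\pm}$, and for $k\in I\sqcup(I+1)$ it follows from the condition defining $\Sc^{\rho_\pm}$ on the relevant half together with $l_i=l_{i+1}+\ep(c_i,c_{i+1})$ for $i\in I$. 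Since $\nu''\in\Odd$ by \Prp{prop:belO}, its potentials are non-increasing along positions; hence it suffices to bound the potential at the last position $s$ from below by $\rho$ in the $\rho_+$ case, and the potential at the first position from above by $\rho$ in the $\rho_-$ case.

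By \Lem{lem:1}, the particle occupying the final position $p$ has potential $l_{\sigma^{-1}(p)}+\Delta(p,\sigma^{-1}(p))$. Because $\sigma$ is increasing on $J$ and on $I\sqcup(I+1)$, the index $k_0:=\sigma^{-1}(s)$ is one of $s$, $\max J$, or $M:=\max(I\sqcup(I+1))\in I+1$; dually $k_1:=\sigma^{-1}(1)$ is one of $1$, $\min J$, or $m:=\min(I\sqcup(I+1))\in I$. If $k_0=s$ (resp. $k_1=1$) the extreme potential is exactly $l_s$ (resp. $l_1$), already on the correct side of $\rho$. If $k_0=\max J<s$ then necessarily $s=M\in I+1$, and \eqref{Ecart} between $\max J$ and $s$ gives $l_{\max J}-2l_s\geq\Delta(\max J,s)$, so the final potential $l_{\max J}-\Delta(\max J,s)\geq 2l_s\geq l_s\geq\rho$; here the factor $2$ attached to the lower half is harmless because $l_s\geq\rho\geq0$. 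The dual subcase $k_1=\min J>1$ (so $1\in I$ and $l'_1=2l_1$) is symmetric: \eqref{Ecart} together with $\eta(1,\min J)\geq1$ gives a final potential $\leq 2l_1-1\leq2\rho-1\leq\rho$, the last step using $\rho\in\{0,1\}$.

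The remaining, and most delicate, subcase is when the boundary secondary half itself lands at the extreme position: $k_0=M$ (forcing $s\in J$ and $\{M+1,\ldots,s\}\subset J$), and dually $k_1=m$. A single use of \eqref{Ecart} is then too weak, since doubling the half potential $l_M$ spoils the bound, so I would combine two complementary estimates for $\Delta(M,s)$. Trivially $\Delta(M,s)\leq s-M$, which yields $l_M-\Delta(M,s)\geq l_M-(s-M)$; and \eqref{Ecart} between $M$ and $s$, where $\eta(M,s)=s-M$, gives $\Delta(M,s)\leq 2l_M-l_s-(s-M)$, that is $l_M-\Delta(M,s)\geq l_s-l_M+(s-M)$. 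Averaging these two lower bounds yields $l_M-\Delta(M,s)\geq l_s/2\geq\rho/2$, and since this potential is an integer and $\rho\in\{0,1\}$ it must be $\geq\rho$. The symmetric average disposes of $k_1=m$, bounding the potential by $(l_1+1)/2\leq(\rho+1)/2$, hence by $\rho$ via integrality. The inclusions $\Psi(\E^{\rho_+})\subset\Odd^{\rho_+}$ and $\Psi(\E^{\rho_-})\subset\Odd^{\rho_-}$ then follow.

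The hard part is precisely this last subcase: the two crude estimates are each individually insufficient, and it is their combination—made effective by the integrality of $\rho\in\{0,1\}$—that pins the boundary half exactly on the correct side of $\rho$. This is the structural difference with \Prp{prop:belErho}, where the absence of any doubling in \eqref{ecart} made a single estimate enough.
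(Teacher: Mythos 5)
Your proposal is correct and follows essentially the same route as the paper: both reduce the claim to bounding the extreme particle of $\Psi(\nu)$, apply \eqref{Ecart} from \Lem{lem:3}, handle the troublesome halved potential of a secondary half by combining $\Delta(k,k')\leq k'-k$ with \eqref{Ecart} (your ``averaging'' is exactly the paper's use of $\eta(i+1,s)=s-i-1\geq\Delta(i+1,s)$), and finish with the integrality argument for $\rho\in\{0,1\}$. The only difference is presentational: you make explicit the reduction to the single particle landing at the extreme position, while the paper bounds the last element of $J$ and of $I+1$ separately and concludes implicitly via the monotonicity of potentials in $\Odd$.
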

\bi In our example, we obtain the corresponding table for $\Psi$:
\[\begin{array}{|c|cccc|}
\hline
_{j}\setminus^{i}&2&4&8&11\\
\hline
1&5&7&7&7\\
6&0&2&2&2\\
7&-1&1&1&1\\
10&-3&-1&-1&-1\\
\hline
\end{array}\,\cdot\]
By \Prp{prop:finalpospsi}, we have four crossings that occur in the pairs $(j,i)$ in $\{(6,2),(6,4),(7,4),(10,11)\}$.
\bi We now prove \Prp{prop:finalpospsi}, \Prp{prop:belO} and \Prp{prop:belOrho}.
\begin{proof}[Proof of \Prp{prop:finalpospsi}]
By using \Lem{lem:3}, one can easily show that $\psi$ is decreasing in the first argument in $J$ (using \eqref{Ecart}) and non-decreasing in the second argument in $I$ (using \eqref{Ecart1}). Let $\sigma$ be the final position 
\So of $\Psi$.
\begin{itemize}
 \item Let us suppose that there exists $(j,i)\in J\times I$ such that $\sigma(j)<\sigma(i)$ but $\psi(j,i)<0$.
 Since $\sigma$ is increasing on $J$ and $I$, and $\sigma(J)+1\setminus \sigma(J)\subset \sigma(I)$, there exist $(j',i')\in J\times I$ such that 
 $j<j',i'<i$ and $\sigma(j')+1 = \sigma(i')$. We also have that 
 \[\psi(j',i') \leq \psi(j',i)\leq \psi(j,i)<0\,\cdot\]
 By evaluating the potential difference at $\sigma(j')$, we obtain that
 \begin{align*}
  D&=\nu''_{\sigma(j')}-\nu''_{\sigma(j')+1} -\Delta(\sigma(j'),\sigma(j')+1) \\
  &= (l_{j'}+\Delta(\sigma(j'),j'))-(l_{i'}+\Delta(\sigma(i'),i'))-\Delta(\sigma(j'),\sigma(i'))\\
  &= l_{j'}-l_{i'}-\Delta(j',i')\\
  &=\psi(j',i')<0\,\cdot
 \end{align*}
This means by \eqref{rel} that $\nu''_{\sigma(j')}\not \od \nu''_{\sigma(j')+1}$. Since $\gamma(\nu''_{\sigma(i')}+\nu''_{\sigma(i'+1)})=\nu''_{\sigma(j')+1}$,
 we can apply $\Lambda$, so that $\sigma$ is no longer the final position. \\
 \item Let us now assume that there exists $(j,i)\in J\times I$ such that $\sigma(j)>\sigma(i)$ but $\psi(j,i)\geq 0$.
 Since $\sigma$ is increasing on $J$ and $I\sqcup(I+1)$, and $\sigma(J)-1\setminus \sigma(J)\subset \sigma(I+1)$, there exist $(j',i')\in J\times I$ such that 
 $j>j',i'>i$ and $\sigma(j')-1 = \sigma(i'+1)=\sigma(i')+1$. We also have that 
 \[\psi(j',i') \geq \psi(j',i)\geq \psi(j,i)\geq 0\,\cdot\]
 By evaluating the potential difference at $\sigma(j')$, we obtain
 \begin{align*}
  D&=\nu''_{\sigma(j')-1}-\nu''_{\sigma(j')} -\Delta(\sigma(j')-1,\sigma(j'))\\
  &= (l_{i'+1}+\Delta(\sigma(i'+1),i'+1))-(l_{j'}+\Delta(\sigma(j'),j'))-\Delta(\sigma(i'+1),\sigma(j'))\\ 
  &= l_{i'+1}-l_{j'}-\Delta(i'+1,j')\\
  &=l_{i'}-l_{j'}-\Delta(i',j')\leq 0\,\cdot
 \end{align*}
This means by \eqref{pp} that $\nu''_{\sigma(j')-1}\not \odp \nu''_{\sigma(j')}$. Since $\mu(\nu''_{\sigma(i')}+\nu''_{\sigma(i'+1)})=\nu''_{\sigma(j')-1}$,
 we can apply $\Lambda$, so that $\sigma$ is no longer the final position.
\end{itemize}
To conclude, we observe that the first part gives that $\sigma(j)<\sigma(i)\Longrightarrow \psi(j,i)\geq 0$ and the second part $\sigma(j)<\sigma(i)\Longleftarrow \psi(j,i)\geq 0$, so that we obtain the first result in \Prp{prop:finalpospsi}. 
\m We obtain \eqref{eq:nbstp2}  with the same reasoning as in the proof of \Prp{prop:finalpospsi}, by observing that the difference of potential when two particles meet does not depend on the choice in which we apply $\Lambda$, and once particles cross by $\Lambda$, they cannot cross back.   
\end{proof}
\begin{proof}[Proof of \Prp{prop:belO}]
 Since for all $k,k'\in \sss$, we obtain by \Lem{lem:1} that
 \[\nu''_{\sigma(k)}-\nu''_{\sigma(k')}-\Delta(\sigma(k),\sigma(k'))=l_k-l_{k'}-\Delta(k,k')\,\cdot\]
 Let us now consider any $k,k'$ such that $\sigma(k)+1=\sigma(k')$.
 \begin{itemize}
 \item If $(k,k')\in J^2$, we have then by \eqref{Ecart} that
 \begin{align*}
 \nu''_{\sigma(k)}- \nu''_{\sigma(k')}&\geq \eta(k,k')\\
 &= |(k,k']\cap J|\\
 &\geq 1\,,
 \end{align*}
 so that by \eqref{pp},  $\nu''_{\sigma(k)}\od\nu''_{\sigma(k')}$.
 \item If $(k,k')\in J\times I$, then since \So ended, we necessarily have 
 \[\nu''_{\sigma(k)}\od \nu''_{\sigma(k')}\,\cdot\] 
 \item If $(k,k')\in I\times I+1$, then we have 
 \[\nu''_{\sigma(k)}-\nu''_{\sigma(k')}=0\] 
 so that by \eqref{rel}, $\nu''_{\sigma(k)}\od\nu''_{\sigma(k')}$.
 \item If $(k,k')\in I+1\times J$, then since \So ended, we necessarily have 
 \[\nu''_{\sigma(k)}\odp \nu''_{\sigma(k')}\,\cdot\]
 \item If $(k,k')\in I+1\times I$, we then have by \eqref{Ecart1} that
 \[\nu''_{\sigma(k)}-\nu''_{\sigma(k')}\geq 0\]
 so that by \eqref{rel}, $\nu''_{\sigma(k)}\od\nu''_{\sigma(k')}$.
 \end{itemize}
We then obtain that $\nu'' = (\nu''_1,\ldots, \nu''_s)$ is well-ordered by $\od$ so that it belongs to $\Odd$.
\end{proof}
\begin{rem}\label{rem:revpsi}
In the latter proof, one can check that the sets $\sigma(I),\sigma(I)+1$ and $\sigma(J)$ form the unique set-partition of $\sss$
such that
\begin{enumerate}
 \item For all $i\in \sigma(I)$, $\nu''_{i}-\nu''_{i+1} = \Delta(i,i+1)$,
 \item for any $j\in \sigma(J)\cap \Ssss$, $\nu''_{j-1}\odp \nu''_{j}$.
\end{enumerate}
\end{rem}
\begin{proof}[Proof of \Prp{prop:belOrho}]
For $\rho\in\{0,1\}$, it suffices to show that $\nu''_{\sigma(k)}\geq \rho$ in the case $\rho_+$ and 
$\nu''_{\sigma(k)}\leq \rho$  in the case $\rho_-$.
\begin{itemize}
 \item If $\nu\in \E^{\rho_+}$, then, by \Lem{lem:3}, this implies that $l'_{s}\geq \rho$. For the last $j\in J$, it is easy to see by \eqref{Ecart} 
 that 
 \begin{align*}
  \nu''_{\sigma(j)} &= l'_{j}+\Delta(\sigma(j),j)\\
  &\geq l'_s + \eta(j,s)+\Delta(\sigma(j),s)\geq \rho\,\cdot
 \end{align*}
 For the last $i+1\in I+1$, we have by \eqref{Ecart} that 
 \begin{align*}
  2\nu''_{\sigma(i+1)}&= 2(l_{i+1}+\Delta(\sigma(i+1),i+1))\\
  & \geq l'_s+\eta(i+1,s)+\Delta(i+1,s) + 2\Delta(\sigma(i+1),i+1)
 \end{align*}
 but we have by definition and \eqref{signe} that $\eta(i+1,s) = s-i-1\geq \Delta(i+1,s)$, so that 
 \begin{align*}
 2\nu''_{\sigma(i+1)}&\geq l'_s + 2\Delta(\sigma(i+1),s)\\
 &\geq l'_s \\\Longrightarrow \nu''_{\sigma(i+1)}&\geq \frac{1}{2}\rho\,\cdot 
 \end{align*}
 Since $\rho\in\{0,1\}$ and $\nu''_{\sigma(i+1)}\in \mathbb{Z}$, we necessarily have that $\nu''_{\sigma(i+1)}\geq \rho$. Then for any $k\in \sss$, $\nu''_{\sigma(k)}\geq \rho$.
 \item For $\nu\in \E^{\rho_-}$, we have the following. 
 \begin{itemize}
  \item If $1\in I$, since $\sigma$ is increasing on $I\sqcup I+1$, we obtain by \eqref{Ecart1} that for all $i\in I\sqcup(I+1) $, 
  \begin{align*}
  \nu''_{\sigma(i)} &= l_i+\Delta(\sigma(i),i) \\&\leq l_1 -\Delta(1,\sigma(i))\\&\leq l_1\leq \rho\,\cdot 
  \end{align*}
  For the first $j\in J$, we have by \eqref{Ecart} that
  \begin{align*}
   \nu''_{\sigma(j)} &= l_j+\Delta(\sigma(j),j)\\
   &\leq 2l_1-\eta(1,j)-\Delta(1,\sigma(j))\\
   &\leq  2l_1-\eta(1,j)\\
   &\leq 2\rho-1\,\cdot
  \end{align*}
  Since $\rho\in \{0,1\}$, we then have that $\nu''_{\sigma(k)}\leq \rho$ for all $k\in \sss$.\\
  \item If $1\in J$, we can easily see as before that by \eqref{Ecart}, $\nu''_{\sigma(j)}\leq \rho$ for all $j\in J$.
  Now let us consider the first $i\in I$. We have by \eqref{Ecart} that 
  \begin{align*}
   2\nu''_{\sigma(i)} &= 2(l_i+\Delta(\sigma(i),i)) \\
   &\leq 2(l_i+\Delta(1,i)) \\
   &\leq l_1 - \eta(1,i)+\Delta(1,i)\\
   &=l_1-i+2+\Delta(1,i)\,\cdot
  \end{align*}
  By using \eqref{signe}, we obtain that
  \begin{align*}
  2\nu''_{\sigma(i)}&\leq \rho+1 \\
  \Longrightarrow \nu''_{\sigma(i)} &\leq \frac{ \rho+1}{2}\,,
  \end{align*}
  so that, since $\rho\in \{0,1\}$ and $\nu''_{\sigma(i)}\in \Z$, we then always have $\nu''_{\sigma(i)}\leq \rho$.
 \end{itemize}
\end{itemize}
\end{proof}
\subsection{The maps $\Phi$ and $\Psi$ are inverse of each other}
\subsubsection{The relation $\Psi\circ \Phi = Id_{\Odd}$}
For any $\la = (\la_1,\ldots,\la_s)\in \Odd$, we choose the unique sets $I,J$ such that 
\begin{enumerate}
  \item $I,I+1,J$ form a set-partition of $\sss$,
  \item for all $i\in I$, $l_i-l_{i+1}=\Delta(i,i+1)$,
  \item for all $j\in\Ssss\cap J$, $l_{j-1}-l_{j}>\Delta(j-1,j)$.
 \end{enumerate}
 Let $\sigma$ be the final position after $\Phi$. Since by \Lem{lem:1}  
 \[\la''_{\sigma(k)}-\la''_{\sigma(k')}-\Delta(\sigma(k),\sigma(k')) = l_k-l_{k'}-\Delta(k,k')\,,\]
 by considering the function $\psi$ in \Prp{prop:finalpospsi}, we obtain, for all $(j,i)\in J\times I$, that
  \begin{align*}
  j<i \Longleftrightarrow \psi(\sigma(j),\sigma(i)) &= l_j-l_{i}-\Delta(j,i)\\
  &\geq \alpha(j,i) \\
  &= |(j,i]\cap J|\\
  &\geq 0
  \end{align*}
  and
  \begin{align*}
  j>i \Longleftrightarrow \psi(\sigma(j),\sigma(i)) &= l_j-l_{i}-\Delta(j,i) \\
  &\leq -\alpha(i,j)\\
  &= -|(i,j]\cap J|\\
  &\leq -1\,,
  \end{align*}
 so that $I,J$ are exactly the final positions of $\sigma(I),\sigma(J)$ after applying $\Psi$. We then have $\Psi(\Phi(\la))=\la$.
 \bi 
 \subsubsection{The relation $\Phi\circ \Psi = Id_{\E}$}
 Let us now take any $\nu\in \E$, and let $\sigma$ be the final position after $\Psi$, and $\Psi(\nu) = \nu'' = (\nu''_1,\ldots,\nu''_s)$ with the enumeration of primary particles. We saw in \Rem{rem:revpsi} that, 
 $\sigma(I),\sigma(I)+1$ and $\sigma(J)$ form the unique set-partition of $\sss$, such that
  \begin{itemize}
   \item for all $\sigma(i)\in \sigma(I)$, $\nu''_{\sigma(i)}- \nu''_{\sigma(i)+1}= \Delta(\sigma(i),\sigma(i)+1)$,
   \item for all $\sigma(j)\in \sigma(J)\cap \Ssss$, $\nu''_{\sigma(j)-1}\odp \nu''_{\sigma(j)}$. 
  \end{itemize}
 The sets $\sigma(I)$ and $\sigma(J)$ then are exactly the unique sets we obtain after \So in the process of $\Phi$ on $\nu''$.
 Let us recall $\beta$. We have that for $k\leq k'$
 \[\beta(k,k') = |[k,k')\cap \sigma(J)|\text{ and }\beta(k,k')=-\beta(k',k)\,\cdot\]
 We then have for any $(j,i)\in J\times I$, since $\sigma$ is increasing on $J$ and $I\sqcup (I+1)$,
 \begin{align*}
 \beta(\sigma(j),\sigma(i)) &= |[1,\sigma(i))\cap \sigma(J)|-|[1,\sigma(j))\cap \sigma(J)|\\
&=\sigma(i)-1 - |[1,\sigma(i))\cap \sigma(I\sqcap (I+1))|- |[1,j)\cap J|\\
 &=\sigma(i)-1 - |[1,i)\cap (I\sqcap (I+1))|- |[1,j)\cap J|\\
 &=\sigma(i)-i + |[1,i)\cap J|- |[1,j)\cap J|\,\cdot
 \end{align*}
We then obtain by \Prp{prop:finalposphi}  and the fact that $l_{i}= l_{i+1}+\Delta(i,i+1)$
\begin{align*}
 \phi(\sigma(j),\sigma(i)) & = (l_j+\Delta(\sigma(j),j)-2(l_{i+1}+\Delta(\sigma(i+1),i+1))-\Delta(\sigma(j),\sigma(i+1))\\
 &\quad -\Delta(\sigma(i+1)-\beta(\sigma(j),\sigma(i)),\sigma(i+1))\\
 &= l_j - 2l_{i+1}-\Delta(j,i+1)-\Delta(i+1-|[1,i)\cap J|+|[1,j)\cap J|,i+1)\\
 &=l_j - 2l_{i}-\Delta(j,i)-\Delta(i+1-|[1,i)\cap J|+|[1,j)\cap J|,i)\cdot
\end{align*}
By using \eqref{signe} and \eqref{Ecart}, we obtain that
\begin{align*}
  j<i \Longleftrightarrow \phi(\sigma(j),\sigma(i)) &\geq \eta(j,i)-\Delta(i-|(j,i)\cap J|,i)\\&
  \geq |(j,i]\cap J|-|(j,i)\cap J|\\&= 0
  \end{align*}
  and 
  \begin{align*}
  j>i \Longleftrightarrow \phi(\sigma(j),\sigma(i))&\leq -\eta(i+1,j) - \Delta(i+1+|[i+1,j)\cap J|,i+1) \\
  &\leq -|(i+1,j]\cap J|+|[i+1,j)\cap J|\\
  &=-1\,\cdot
  \end{align*}
 The final positions for $\sigma(I),\sigma(J)$ after applying $\Phi$ on $\nu''$ are then exactly $I,J$. We then obtain that 
 $\Phi(\Psi(\nu))= \nu$.
\section{Closing remarks}\label{sect:remarks}
We end this paper with three remarks.
\bi
First, we consider another relation $\odg$ on $\Pp\sqcup \Sc$, which is the same as $\odp$ for \eqref{pp} and \eqref{ss}, but slightly different for other comparisons :
\begin{equation}\label{pss}
 (k,c)\odg (k'',c',c'') \Longleftrightarrow k - (2k''+\ep(c',c''))>\ep(c,c')+\ep(c',c'')
\end{equation}
\begin{equation}\label{spp}
(k',c,c')\odg (k'',c'')\Longleftrightarrow (2k'+\ep(c,c'))-k'' \geq \ep(c,c')+\ep(c',c'')\,\cdot
\end{equation}
One can easily check that, for $\ep^*(c',c) = \ep(c,c')$, we have the following:
\begin{align*}
 (k,c)\odg (k',c')&\Longleftrightarrow (-k',c')\gg_{\ep^*} (-k,c)\,,
 \\(k,c)\odg (k',c',c'') &\Longleftrightarrow (-k'-\ep^*(c'',c'),c'',c')\gg_{\ep^*}(-k,c)\,,
 \\(k,c,c')\odg (k',c'')&\Longleftrightarrow (-k',c'')\gg_{\ep^*}(-k-\ep^*(c',c),c',c)\,,
 \\(k,c,c')\odg (k',c'',c''') &\Longleftrightarrow (-k'-\ep^*(c''',c''),c''',c'')\gg_{\ep^*}(-k-\ep^*(c',c),c',c).
\end{align*}
If we define $\Ee$ to be the set of all generalized colored partitions with particles in $\Pp\sqcup\Sc$
and related by  $\odg$, we then obtain the following corollary of \textbf{Theorem 1.1}.
\begin{cor}
For any integer $n$ and any finite non-commutative product  $C$ of colors in $\C$,
 there exists a bijection between $\{\la \in \Odd: (C(\la),|\la|)=(C,n)\}$ and $\{\nu \in \Ee: (C(\nu),|\nu|)=(C,n)\}$. 
\end{cor}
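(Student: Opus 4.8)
The plan is to reduce the statement to \Thm{theo:degree2} applied to the \emph{transpose} energy $\ep^*\colon(c,c')\mapsto\ep(c',c)$, exploiting the order-reversing duality recorded in the displayed equivalences just above the corollary. By the remark following \Def{def:minerg}, $\ep^*$ is again a minimal energy, so \Thm{theo:degree2} is available for it (no transitivity is needed, as \Expl{ex:twister} already shows).

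First I would introduce the \emph{reversal--negation} map $\kappa$, acting on particles by
\[\kappa(k,c) = (-k,c)\quad\text{and}\quad \kappa(k,c,c') = (-k-\ep(c,c'),\,c',\,c)\,,\]
and on a partition $\nu=(\nu_1,\ldots,\nu_t)$ by reversing the order of the particles and applying $\kappa$ termwise, $\kappa(\nu)=(\kappa(\nu_t),\ldots,\kappa(\nu_1))$. Read for the energy $\ep^*$, the image $\kappa(k,c,c')$ has potential $2(-k-\ep(c,c'))+\ep^*(c',c)=-(2k+\ep(c,c'))$ and state $c'c$, while $\kappa(k,c)$ keeps its state and negates its potential. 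Thus $\kappa$ negates the energy, $|\kappa(\nu)|=-|\nu|$, and reverses the color word, $C(\kappa(\nu))=\overleftarrow{C(\nu)}$, where $\overleftarrow{C}$ is $C$ read backwards (reversing the sequence together with transposing each secondary state exactly reverses the global word of primary states). On the level of particles $\kappa$ is a bijection, since the analogous map for $\ep^*$ inverts it: $\kappa_{\ep^*}\circ\kappa=\mathrm{id}$.

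The core step is to verify that $\kappa$ is an order-reversing bijection from $\Ee$ (particles related by $\odg$ for $\ep$) onto the set $\mathcal{E}_{\ep^*}$ of partitions from $\Pp\sqcup\Sc$ related by $\gg_{\ep^*}$, i.e.\ $\E$ built from $\ep^*$. This is precisely the content of the four displayed equivalences: for each degree-combination of a consecutive pair $x,y$ they assert $x\odg y \Leftrightarrow \kappa(y)\gg_{\ep^*}\kappa(x)$. Since a generalized colored partition is determined by the well-relatedness of its consecutive particles, these equivalences show that $\kappa$ carries a valid $\Ee$-partition to a valid $\mathcal{E}_{\ep^*}$-partition; reading each equivalence in the reverse direction shows the same for the inverse, so $\kappa$ is a bijection. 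An entirely parallel reversal--negation $(k,c)\mapsto(-k,c)$ on a reversed sequence gives a bijection $\Odd\to\mathcal{O}_{\ep^*}$ ($\Odd$ built from $\ep^*$), because $(k,c)\od(k',c')\Leftrightarrow k-k'\geq\ep(c,c')\Leftrightarrow(-k',c')\succ_{\ep^*}(-k,c)$; it too sends $(C,n)$ to $(\overleftarrow{C},-n)$.

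It then remains only to compose three bijections. Fixing $n$ and $C$, the map $\kappa$ identifies $\{\nu\in\Ee:(C(\nu),|\nu|)=(C,n)\}$ with $\{\nu'\in\mathcal{E}_{\ep^*}:(C(\nu'),|\nu'|)=(\overleftarrow{C},-n)\}$; \Thm{theo:degree2} for $\ep^*$ identifies the latter with $\{\la'\in\mathcal{O}_{\ep^*}:(C(\la'),|\la'|)=(\overleftarrow{C},-n)\}$; and the inverse of the primary reversal--negation identifies this with $\{\la\in\Odd:(C(\la),|\la|)=(C,n)\}$, using $\overleftarrow{\overleftarrow{C}}=C$ and $-(-n)=n$. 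The composite is the desired bijection. I expect no genuine obstacle: the equivalences driving the duality are already recorded and $\ep^*$ is manifestly a minimal energy, so the only real work is the bookkeeping of the two statistics under $\kappa$—especially that reversing the sequence and transposing each secondary state reverses the whole color word—and the careful tracking of $(C,n)\mapsto(\overleftarrow{C},-n)$ through the composition.
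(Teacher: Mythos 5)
Your proposal is correct and is essentially the paper's own argument: the paper records exactly the four order-reversing equivalences between $\odg$ for $\ep$ and $\gg_{\ep^*}$ under negation-and-reversal, and deduces the corollary by applying \Thm{theo:degree2} to the minimal energy $\ep^*$. Your write-up simply makes explicit the bookkeeping (the map $\kappa$, the statistics $(C,n)\mapsto(\overleftarrow{C},-n)$, and the three-fold composition) that the paper leaves implicit.
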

While the relation $\odg$ differs from $\odp$, they both give similar difference conditions. A good example of the similarity between these relations is the fact that we can retrieve Siladi\'c's theorem by setting in the latter corollary $\C=\{a<b\}$, $\ep(i,j) = \chi(i\leq j)$ with non-negative primary part size, followed by the transformation $(q,a,b)\mapsto (q^4,q,q^3)$.
\bi
Second, we point out that another major result, the Euler distinct-odd identity, can be retrieved from \Cor{cor:2col}. Let us consider the restriction of $\C$ to the singleton $\{a\}$. The corresponding difference condition gives the matrix 
\[\bordermatrix{
\text{}&a
\cr a&0
}\]
and the corresponding generalized partitions in \Cor{cor:2col} are the classical partitions where all the parts have state $a$.
The restriction of $D'$ to the states $a,a^2$ gives the matrix
\[
\bordermatrix{
\text{}&a&a^2
\cr a&1&0
\cr a^2 &1&0
}
\,\cdot\]
One can view the corresponding partitions in $\mathcal E$ as the generalized partitions into distinct positive particles with state $a$, along with some particles with states $a^2$ having positive even potentials. In other words, we have a pair of partitions, the first partition into distinct positive  particles with state $a$, and the second into particles with positive even potential and state $a^2$. 
\m We then redo the process with the following rules.  At Step $k$, we apply the transformation $(q,a) \mapsto (q^{{2^{k-1}}},a^{2^{k-1}})$ into the identity given by the Step $1$. This leads to the following identity: the number of partitions of $n$ into particles with state $a^{2^{k-1}}$ and potential divisible by $2^{k-1}$ is equal to the number of partitions of $n$ into distinct particles with state $a^{2^{k-1}}$ and potential divisible by $2^{k-1}$, and particles with state $a^{2^k}$ and potential divisible by $2^{k}$.
\m By considering the initial Step $1$, and iterating the Steps $k$, we then have the following identity:
the number of partitions of $n$ into positive particles with state $a$ is equal to the number of partitions of $n$ into distinct particles, with the particles with states $a^{2^k} (k\in \Z_{\geq 0})$ having a potential divisible by $2^k$. We finally recover the Euler distinct-odd identity by applying in the latter identity the transformation $(q,a) \mapsto (q^2,q^{-1})$.
\bi 
Finally, we remark that the maps given in \Sct{sect:oe} and \Sct{sect:eo} differ from the variant of Bressoud's algorithm in \cite{IK19} for the generalization of Siladi\'c's theorem. In \So of $\Phi$, instead of choosing the troublesome pairs of primary particles by from the right to the left, we started in \cite{IK19} from the left to the right by first choosing the greatest potentials. This choice could have been done here. The major observation by proceeding this way is that the map $\Phi$ remains the same. This comes from the fact that the choice of troublesome pairs only depends on the maximal sub-sequences of $\la$ 
 of the form $\la_k,\ldots,\la_{k'}$,
 which satisfy $l_i-l_{i+1}=\Delta(i,i+1)$ for all $i\in \{k,\ldots,k'\}$, with the notation used in \Sct{sect:wdphi}. For such a sub-sequence with an even length, whatever the choice made, we always take the primary particles pairwise. When the length is odd, our choice  implies that we take the particles pairwise from the right to the left so that there still remains a primary particle to the left of the sequence. By crossing this primary particle with the secondary particles obtained after summing the pairs in the sequence, by \Lem{lem:1}, we exactly obtain the pairs resulting from the choice of the troublesome pairs starting from the left to the right, and the primary particle then becomes the rightmost particle of the sequence. 
\m This observation unveils a strong property that links the generalized partitions of $\Odd$ and $\E$, both kinds of partitions seen as sequences of primary particles: 
their major attribute are the maximal sequences of consecutive primary particles. In the second paper of this series, we will see how this attribute allows us to define the particles of degree $k$ for a positive $k\geq 3$, and how this definition is closely related to the notion of crystal and energy function in the Quantum theory.

 \end{document}